\RequirePackage[l2tabu,orthodox]{nag}
\documentclass[reqno,12pt,a4paper,oneside]{amsart}
\pdfoutput=1
\usepackage
           {geometry}
\usepackage{ifxetex,ifluatex}
\newif\ifxetexorluatex
\ifxetex
  \xetexorluatextrue
\else
  \ifluatex
    \xetexorluatextrue
  \else
    \xetexorluatexfalse
  \fi
\fi

%
\ifxetexorluatex
  \usepackage{fontspec}           
  \usepackage{unicode-math}       
  \usepackage{polyglossia}        
  \setmainlanguage{english}
\else
  \usepackage[utf8]{inputenc}     
  \usepackage[english]{babel}     

  \usepackage{amsfonts}
  \usepackage{amssymb}
  
  \usepackage{lmodern}
  \usepackage[T1]{fontenc}
\fi
%

\usepackage{booktabs}   
\usepackage[inline]{enumitem} 
\usepackage{xspace}     
\usepackage{xcolor}     
\usepackage{amsmath}    
\usepackage{mathtools}  
\usepackage{stmaryrd} 

\usepackage{graphics}

\definecolor{linkblue}{RGB}{1,1,190}
\definecolor{citegreen}{RGB}{1,190,1}
\usepackage[linkcolor=linkblue
           ,citecolor=citegreen
           ,ocgcolorlinks        
           ,bookmarksopen=True,  
           ]{hyperref}
\makeatletter
  \AtBeginDocument{
    \hypersetup{
      pdftitle  = {\@title},
      pdfauthor = {\authors}     
    }
  }
\makeatother

\usepackage{amsthm}

%

\usepackage[capitalize    
           ]{cleveref}

%
\theoremstyle{definition}
\newtheorem {defi}          {Definition}[section]

\theoremstyle{plain}
\newtheorem {thm}     [defi]{Theorem}
\crefname   {thm}{Theorem}{Theorems}
\newtheorem*{thm*}          {Theorem}
\newtheorem {lemma}   [defi]{Lemma}
\newtheorem {prop}    [defi]{Proposition}

\newtheorem {cor}     [defi]{Corollary}

\theoremstyle{remark}
\newtheorem {remark}  [defi]{Remark}
\newtheorem {exm}     [defi]{Example}
\newtheorem*{exm*}          {Example}
\crefname   {exm}{Example}{Examples}

\crefname{equation}{Equation}{Equations}
\Crefname{equation}{Equation}{Equations}
\crefname{figure}  {Figure}{Figures}
\Crefname{figure}  {Figure}{Figures}
%

%
\makeatletter

\newcommand{\sc@lettershortcut}[3]{%
  \expandafter\providecommand\csname #2#3\endcsname{#1{#3}}%
}

\newcommand{\sc@shortcuts}[3]{%
  \count@=0
  \loop
  \advance\count@ 1
  \edef\tmp@{%
    \noexpand\sc@lettershortcut\unexpanded{{#1}}{#2}{#3\count@}
  }
  \tmp@
  \ifnum\count@<26
  \repeat
}

\newcommand{\defshortcuts}[2]{\sc@shortcuts{#1}{#2}{\@alph}}

\newcommand{\defShortcuts}[2]{\sc@shortcuts{#1}{#2}{\@Alph}}

\makeatother
%

\let\sc\undefined
\defShortcuts{\mathbb}{b}
\defShortcuts{\mathcal}{c}
\defShortcuts{\mathfrak}{f}
\defShortcuts{\mathsf}{s}
\defshortcuts{\mathfrak}{f}
\defshortcuts{\mathsf}{s}

%
\def\rfop{*}
\makeatletter
\newcommand\rigidfactorization[2][]{%
  \def\rf@delim{\rfop}
  \newif\ifrf@notfirst
  #1
  \@for\next:=#2\do{%
    \ifrf@notfirst
      \rf@delim
    \fi
    \rf@notfirsttrue
    \next
  }%
}
\makeatother
\newcommand\rf\rigidfactorization
%

%
\newcommand{\quo}{\mathsf{q}}                     

\ifxetexorluatex
  
\else
  
\fi
\providecommand{\val}{\mathsf{v}}                 

\newcommand{\isomto}{\overset{\sim}{\rightarrow}} 


\DeclarePairedDelimiter{\length}{\lvert}{\rvert}
\DeclarePairedDelimiter{\abs}{\lvert}{\rvert}
\DeclarePairedDelimiter{\card}{\lvert}{\rvert}

\DeclareMathOperator{\End}{End}

\DeclareMathOperator{\Hom}{Hom}
\DeclareMathOperator{\Ext}{Ext}

\DeclareMathOperator{\ann}{ann}

\DeclareMathOperator{\id}{id}

\DeclareMathOperator{\nr}{nr}

\DeclareMathOperator{\modspec}{modspec}

\DeclareMathOperator{\udim}{udim}

\DeclareMathOperator{\im}{im}

%


\setlist[enumerate,1]{label=\textup{(\arabic*)}, ref=\textup{(}\arabic*\textup{)}, leftmargin=0.75cm}
\setlist[enumerate,2]{label=\textup{(}\roman*\textup{)}, ref=\textup{(}\roman*\textup{)}}

\newlist{equivenumerate}{enumerate}{1}
\setlist[equivenumerate,1]{%
  label=\textup{(\alph*)},
  ref=\textup{(}\alph*\textup{)},
  leftmargin=0.75cm
}

\newlist{equivenumerate*}{enumerate*}{1}
\setlist*[equivenumerate*,1]{%
  label=\textup{(\alph*)},
  ref=\textup{(}\alph*\textup{)},
  leftmargin=0.75cm
}

\newlist{propenumerate}{enumerate}{1}
\setlist[propenumerate,1]{%
  label=\textup{(\roman*)},
  ref=\textup{(}\roman*\textup{)},
  leftmargin=0.75cm
}

\newenvironment{claim}[1][]{%
\begin{description}[leftmargin=0pt]%
\ifthenelse{\isempty{#1}}{\item[Claim]}{\item[Claim {#1}]}
}%
{\end{description}}



\makeatletter
\def\sr@stripleadingcol::#1{#1}
\def\sr@dosubref#1#2:#3 #4{\if\relax#3\relax%
  \def\first{\sr@stripleadingcol #4}%
  #1{\first}\ref{\first:#2}%
\else%
  \sr@dosubref#1#3 {#4:#2}%
\fi}%

\newcommand{\subref}[1]{\sr@dosubref\cref#1: :\relax}
\newcommand{\Subref}[1]{\sr@dosubref\Cref#1: :\relax}
\makeatother
%

\AtBeginDocument{\renewcommand{\setminus}{\smallsetminus}}


\usepackage{tikz-cd}
\usepackage{wrapfig}
\usepackage{xifthen}
\usepackage{rotating}

\usepackage{microtype}

\title[Factorizations in bounded HNP rings]{Factorizations in bounded hereditary Noetherian prime rings}
\author[D.~Smertnig]{Daniel Smertnig}

\address{University of Graz\\
         NAWI Graz\\
         Institute for Mathematics and Scientific Computing\\
         Heinrichstra\ss e 36\\
         8010 Graz, Austria}
\email{daniel.smertnig@uni-graz.at}
\thanks{The author was supported by the Austrian Science Fund (FWF) project P26036-N26.}

\ifxetexorluatex
  \newcommand{\delspc}{\!\!}
\else
  \newcommand{\delspc}{\!}
\fi

\newcommand{\fmon}[1]{\mathcal{F^*}({#1})}            
\newcommand{\famon}[1]{\mathcal{F}_{\delspc M}({#1})} 
\newcommand{\fagrp}[1]{\mathcal{F}_{\delspc G}({#1})} 
\newcommand{\res}[1]{\overline{#1}}                   

\DeclareMathOperator{\ttop}{top}
\DeclareMathOperator{\tbottom}{bottom}
\newcommand{\simple}[1]{\cS({#1})}                
\newcommand{\towers}[1]{\cT({#1})}                
\newcommand{\ctowers}[1]{\cT_{\delspc\cC}({#1})}  
\newcommand{\ftowers}[1]{\cT_{\delspc\cF}({#1})}  
\newcommand{\cp}[1]{\cP({#1})}                    
\newcommand{\qcp}[1]{\langle\cp{#1}\rangle}       
\newcommand{\pcg}[1]{G({#1})}                     
\newcommand{\pcgm}[1]{G_{\textup{max}}({#1})}     
\newcommand{\stzcls}[1]{\mathsf S(#1)}            
\newcommand{\gcg}[1]{\operatorname{gcg}({#1})}    
\newcommand{\gcgm}[1]{\operatorname{gcg}_{\textup{max}}({#1})}
\newcommand{\cgrp}[1]{\mathcal C({#1})}           
\newcommand{\cgrpm}[1]{\mathcal C_{\textup{max}}({#1})}
\newcommand{\modfl}{\operatorname{\textup{\textbf{mod}}_{fl}}}

\setlength{\abovecaptionskip}{0cm}

\makeatletter
 \def\@textbottom{\vskip \z@ \@plus 6pt}
 \let\@texttop\relax
\makeatother

\hyphenation{bound-ed}

\keywords{Hereditary Noetherian prime rings, Dedekind prime rings, Krull monoids, factorization theory, monoids of zero-sum sequences, sets of lengths, catenary degrees, transfer homomorphisms, ideal class groups}
\subjclass[2010]{Primary 16E60; Secondary 16P40, 16U30, 19A49, 20M13}

\begin{document}

\begin{abstract}
  If $H$ is a monoid and $a=u_1 \cdots u_k \in H$ with atoms (irreducible elements) $u_1, \ldots, u_k$, then $k$ is a length of $a$, the set of lengths of $a$ is denoted by $\mathsf L(a)$, and $\mathcal L(H)=\{\,\mathsf L (a) \mid a \in H \,\}$ is the system of sets of lengths of $H$.
  Let $R$ be a hereditary Noetherian prime (HNP) ring.
  Then every element of the monoid of non-zero-divisors $R^\bullet$ can be written as a product of atoms.
  We show that, if $R$ is bounded and every stably free right $R$-ideal is free, then there exists a transfer homomorphism from $R^{\bullet}$ to the monoid $B$ of zero-sum sequences over a subset $G_{\textup{max}}(R)$ of the ideal class group $G(R)$.
  This implies that the systems of sets of lengths, together with further arithmetical invariants, of the monoids $R^{\bullet}$ and $B$ coincide.
  It is well-known that commutative Dedekind domains allow transfer homomorphisms to monoids of zero-sum sequences, and the arithmetic of the latter has been the object of much research.
  Our approach is based on the structure theory of finitely generated projective modules over HNP rings, as established in the recent monograph by Levy and Robson.
  We complement our results by giving an example of a non-bounded HNP ring in which every stably free right $R$-ideal is free but which does not allow a transfer homomorphism to a monoid of zero-sum sequences over any subset of its ideal class group.
\end{abstract}

\maketitle

\section{Introduction}
\label{sec:introduction}

In a Noetherian ring, every non-zero-divisor that is not a unit can be written as a finite product of atoms (irreducible elements).
However, usually such a factorization is not unique.
Arithmetical invariants, such as sets of lengths, describe this non-uniqueness: If $a \in R^\bullet$ and $a=u_1\cdots u_k$ with atoms $u_1$,~$\ldots\,$,~$u_k$, then $k$ is a length of $a$, the \emph{set of lengths} of $a$ is denoted by $\sL(a)$, and $\cL(R^\bullet) = \{\,\sL(a) \mid a \in R^\bullet \,\}$ is the \emph{system of sets of lengths}.
If there exists an element $a$ with $\card{\sL(a)} > 1$, then $\card{\sL(a^n)} > n$ for every $n \in \mathbb N$.
Therefore the system of sets of lengths either consists of singletons, so that $\cL(R^\bullet) = \{\, \{n\} \mid n \in \bN_0 \,\}$ (except in the trivial case where $R^\bullet$ is a group), or the cardinalities of its elements are unbounded.

The principal aim of factorization theory is to describe the various phenomena of non-uniqueness of factorizations by suitable arithmetical invariants (such as sets of lengths), and to study the interaction of these arithmetical invariants with classical algebraic invariants of the underlying ring (such as class groups).
Factorization theory has its origins in algebraic number theory and in commutative algebra.
The focus has been on commutative Noetherian domains, commutative Krull domains and monoids, their monoids of ideals, and their monoids of modules.
We refer to \cite{andersondd97,narkiewicz04,chapman05,ghk06,geroldinger09,baeth-wiegand13,facchini12,leuschke-wiegand12,baeth-wiegand13,fontana-houston-lucas13,chapman-fontana-geroldinger-olberding16} for recent monographs, conference proceedings, and surveys on the topic.

A key strategy, lying at the very heart of factorization theory, is the construction of a transfer homomorphism from a ring or monoid of interest to a simpler one.
This allows one to study the arithmetic of the simpler object and then pull back the information to the original object of interest.
An exposition of transfer principles in the commutative setting can be found in \cite[Section 3.2]{ghk06}.
The best investigated class of simpler objects occurring in this context is that of monoids of zero-sum sequences over subsets of abelian groups.
These monoids are commutative Krull monoids having a combinatorial flavor; questions about factorizations are reduced to questions about zero-sum sequences, which are studied with methods from additive combinatorics.
We refer the reader to the survey \cite{geroldinger09} for the interplay between additive combinatorics and the arithmetic of monoids.

Until relatively recently, the study of factorizations of elements in noncommutative rings had its emphasis on a variety of concepts of factoriality, some of which are finding continuing applications in ring theory (see for example \cite{launois-lenagan-rigal06,goodearl-yakimov14}).
In the last couple of years, first steps have been made to describe the non-uniqueness of factorizations in noncommutative rings by extending the machinery successfully used in the commutative setting.
We refer to \cite[Section 5]{smertnig16a} for a summary of results that were obtained in \cite{estes-matijevic79a,estes-matijevic79b, estes91b,baeth-ponomarenko-etal11,geroldinger13,smertnig13,bachman-baeth-gossell14,baeth-smertnig15,bell-heinle-levandovskyy17}, and to \cite{geroldinger16} for a recent survey on sets of lengths that presents many rings and monoids having transfer homomorphisms to monoids of zero-sum sequences and also incorporates the noncommutative point of view.

A central result of the commutative theory implies that every commutative Dedekind domain possesses a transfer homomorphism to a monoid of zero-sum sequences over a subset $G_P$ of its ideal class group $G$.
Here, $G_P$ consists of all classes containing nonzero prime ideals.
Many arithmetical invariants, in particular sets of lengths but also more refined arithmetical invariants such as catenary degrees, are (essentially) preserved by this transfer homomorphism.
By means of multiplicative ideal theory in monoids, a similar transfer homomorphism can be constructed more generally for commutative Krull monoids.

Since multiplicative ideal theory has played a key role in the commutative setting, it is natural that the first extensions of the method to the noncommutative setting have followed the same path:
In \cite{geroldinger13}, it has been shown that a transfer homomorphism exists for normalizing Krull monoids.
A further generalization to \emph{arithmetical maximal orders} was undertaken in \cite{smertnig13,baeth-smertnig15}; the existence of a transfer homomorphism was shown under a (sufficient) condition that becomes trivial in the cases considered before.
Again, the transfer homomorphism is to a monoid of zero-sum sequences over a subset $C_M$ of an abelian group $C$.

Natural noncommutative generalizations of commutative Dedekind domains are \emph{hereditary Noetherian prime \textup{(}HNP\textup{)} rings} and \emph{Dedekind prime rings} (rings such that every nonzero submodule of a left or right progenerator is a progenerator).
Every HNP ring is an order in its simple Artinian ring of quotients; a ring is a Dedekind prime ring if and only if it is an HNP ring and a maximal order.
A ring is \emph{bounded} if every essential left or right ideal contains a nonzero two-sided ideal.
In this paper we extend the transfer result for commutative Dedekind domains to bounded HNP rings (under a sufficient condition).

For Dedekind prime rings there exists a well-developed multiplicative ideal theory for two-sided as well as one-sided ideals, originating with pioneering work of Asano.
In particular, bounded Dedekind prime rings are arithmetical maximal orders, and hence the results of \cite{smertnig13,baeth-smertnig15} are applicable in principle.
However, to make effective use of these results it is necessary to understand the sufficient condition appearing in the transfer result for arithmetical maximal orders, as well as $C_M$ and $C$, in terms of more natural invariants of Dedekind prime rings.

In \cite{smertnig13}, this was done in the more special setting where $R$ is a classical maximal order over a holomorphy ring $\cO$ in a central simple algebra over a global field.
In this case the sufficient condition for the existence of a transfer homomorphism is for every stably free right $R$-ideal to be free.
The group $C$ is isomorphic to a ray class group of $\cO$, and $C=C_M$.
Moreover, when $\cO$ is a ring of algebraic integers, the condition for the existence of the transfer homomorphism is not only sufficient but also necessary by \cite[Theorem 1.2]{smertnig13}.
If the condition fails, several arithmetical invariants, which are finite otherwise, are infinite.

For the more general class of HNP rings, the multiplicative ideal theory of two-sided ideals has been investigated.
We refer the reader to \cite[\S22]{levy-robson11}, and also mention \cite{rump01,akalan-marubayashi16,rump-yang16} for samples of recent progress in noncommutative multiplicative ideal theory.
However, a one-sided ideal theory seems not to have been developed.

In this paper, in lieu of an ideal-theoretic approach, our method for constructing a transfer homomorphism is module-theoretic in nature.
We make extensive use of the structure theory of finitely generated projective modules over HNP rings, which can be viewed as a far-reaching generalization of Steinitz's theorem.
This theory was developed over the last decades, chiefly by Eisenbud, Levy, and Robson, and is presented in the monograph \cite{levy-robson11}.

We obtain a transfer homomorphism for bounded HNP rings in which every stably free right $R$-ideal is free.
(In fact, the method works a little bit beyond the bounded case.)
This is the main result of the present paper, and it is given in \cref{thm-transfer,thm-catenary,cor-cat}.
It implies that results on the system of sets of lengths and catenary degrees in monoids of zero-sum sequences carry over to bounded HNP rings, as long as every stably free right ideal is free.

In \cref{sec-background} we recall the main results on finitely generated projective modules over HNP rings, as they are used in the present paper.
The necessary basic notions from factorization theory are also recalled.

In \cref{sec-class-groups}, for an HNP ring $R$, we define a class group $\cgrp{R}$ as a subquotient of $K_0\modfl(R)$, where $\modfl(R)$ denotes the category of (right) $R$-modules of finite length.
If $R$ is commutative, it is easily seen that $\cgrp{R}$ is isomorphic to the ideal class group as it is traditionally defined.
The main result in this section is \cref{t-k0-iso-pcg}, which shows that $\cgrp{R}$ is isomorphic to the ideal class group $\pcg{R}$ as defined in \cite{levy-robson11} as a direct summand of $K_0(R)$.
Moreover, we show that the distinguished subset $\pcgm{R} \subset \pcg{R}$, which appears in the construction of a transfer homomorphism, is preserved under Morita equivalence and passage to a Dedekind right closure.
The results of \cref{sec-class-groups} hold for all HNP rings; no additional restrictions (such as boundedness) are imposed.

\Cref{sec-transfer-hom} contains the main results of the paper.
We construct a transfer homomorphism for bounded HNP rings in which every stably free right $R$-ideal is free in \cref{thm-transfer}.
The natural class group to use in the construction is $\cgrp{R}$, but by the results from the previous section it is isomorphic to $\pcg{R}$.
The possible existence of non-trivial cycle towers complicates matters compared to the special case of bounded Dedekind prime rings; the combinatorial \cref{lemma-comb} is crucial.
In the case of classical hereditary orders, some of the results can also be derived from earlier work of Estes \cite{estes91b} together with results on congruence monoids; see \subref{r-main:est}.
Catenary degrees require additional work and are dealt with in \cref{ssec:catenary}.

Bounded Dedekind prime rings form the class of rings in the intersection between arithmetical maximal orders and HNP rings.
Thus, to such rings the results from \cref{sec-transfer-hom} as well as the results for arithmetical maximal orders in \cite{smertnig13,baeth-smertnig15} apply.
In \cref{sec-dedekind}, we show how the results of \cite{smertnig13,baeth-smertnig15} can be applied to deduce some of the conclusions in \cref{sec-transfer-hom} in the special case of bounded Dedekind prime rings.
This section depends on the results from \cref{sec-class-groups} but not on those from \cref{sec-transfer-hom}.

We remark again that the approach in the present paper (for HNP rings) and the one in \cite{smertnig13,baeth-smertnig15} (for arithmetical maximal orders) are somewhat different.
The former is ring- and module-theoretic in nature and restricted to dimension $1$.
(A similar approach, studying factorizations of $a \in R^\bullet$ by means of the module $R/aR$, has also been used in \cite{facchini-smertnig-nguyen14}.)
The latter pursues a monoid- and ideal-theoretic viewpoint, through the Brandt groupoid, and, while not being limited to dimension $1$, is limited to maximal orders.
However, for bounded Dedekind prime rings, in the end both approaches yield essentially the same transfer homomorphism.

Finally, in \cref{sec-examples}, we give some examples showing different behavior in non-bounded HNP rings.
For instance, if $R$ is a bounded HNP ring in which every stably free right $R$-ideal is free and $\pcg{R}$ is trivial, then $R$ is half-factorial, that is, the length of a factorization of an element is uniquely determined.
(This is a trivial consequence of the results in \cref{sec-transfer-hom}.)
In \cref{p-nonhf}, we show that, if the condition `bounded' is dropped, there exist counterexamples.
In particular, for these examples, there does not exist a transfer homomorphism to a monoid of zero-sum sequences over a subset of the class group.
In \cref{e-no-ft}, we construct an explicit such example in the $2 \times 2$-matrix ring over a basic idealizer of the first Weyl algebra.

\emph{Throughout the paper, let $R$ be a hereditary noetherian prime \textup{(}HNP\textup{)} ring.
To avoid trivial cases, we assume that $R$ is non-Artinian.}

\section{Background and Notation}
\label{sec-background}

By $\bN_0$ we denote the set of nonnegative integers, and by $\bN$ the set of positive integers.
The symbol $\subset$ denotes an inclusion of sets that is not necessarily proper.
We set $[a,b]=\{\, x \in \bZ \mid a \le x \le b \,\}$ for $a$,~$b \in \bZ$.
By a \emph{monoid} we mean a cancellative semigroup with identity.
As a general reference for noncommutative Noetherian rings we use \cite{mcconnell-robson01}, for HNP rings \cite{levy-robson11}.

An HNP ring $R$ has a simple Artinian quotient ring $\quo(R)$.
To avoid trivial cases, we assume that $R$ is not Artinian, that is, $R \ne \quo(R)$.
By $\quo(R)^\times$ we denote the unit group of $\quo(R)$.
For a subset $X \subset \quo(R)$ we write $X^\bullet$ for the subset of non-zero-divisors of $\quo(R)$ contained in $X$.
We recall that $\quo(R)^\bullet=\quo(R)^\times$ and hence $X^\bullet = X \cap \quo(R)^\times$.
The monoid $R^\bullet=R\cap \quo(R)^\times$ is the multiplicative monoid of all non-zero-divisors of $R$.
In particular, a non-zero-divisor of $R$ remains a non-zero-divisor in $\quo(R)$.

In noncommutative rings, the behavior of zero-divisors can be quite pathological.
However, for HNP rings this is not the case.
For $a \in R$ the following are equivalent:
\begin{equivenumerate*}
  \item $a$ is a zero-divisor
  \item $a$ is a left zero-divisor
  \item $a$ is a right zero-divisor.
\end{equivenumerate*}
(This is a consequence of $R$ being a prime Goldie ring.)
It follows that every multiple of a zero-divisor is a zero-divisor.
Consequently, every left or right divisor of a non-zero-divisor is a non-zero-divisor.
Thus, $R^\bullet$, as a submonoid of $R$, is closed under taking left or right divisors.
For $a$,~$b \in R^\bullet$, we have $aR \subset bR$ if and only if $aR^\bullet \subset bR^\bullet$.
Moreover, if $a \in R^\bullet$ and $b \in R$ with $aR=bR$, then also $b \in R^\bullet$.

A right $R$-submodule $I \subset \quo(R)$ is a \emph{fractional right $R$-ideal} if there exist $x$,~$y \in \quo(R)^\times$ such that $x \in I$ and $yI \subset R$.
It is a \emph{right $R$-ideal} if moreover $I \subset R$.
In other words, a right $R$-ideal is a right ideal of $R$ that contains a non-zero-divisor.
A right ideal $I$ of $R$ contains a non-zero-divisor if and only if it is an essential submodule of $R$, which in turn is equivalent to $\udim I=\udim R$.
The ring $R$ is \emph{right bounded} if every right $R$-ideal contains a nonzero (two-sided) ideal of $R$.
It is \emph{bounded} if it is left and right bounded.

By an ($R$-)module, without further qualification, we mean a right $R$-module.
We will make use of the theory of finitely generated projective modules over an HNP ring, as given by Levy and Robson in \cite{levy-robson11}.
Recall that two projective modules $P$ and $Q$ are \emph{stably isomorphic} if there exists an $n \in \bN_0$ such that $P \oplus R^n \cong Q \oplus R^n$.
Levy and Robson give a description of the stable isomorphism classes of finitely generated projective modules by means of two independent invariants: the \emph{Steinitz class}, which is an element of an abelian group, and the \emph{genus}.
The genus is a vector of nonnegative integers, defined in terms of the isomorphism classes of simple modules of $R$.

We briefly recall the definition of the genus.
Let $V$ and $W$ be two simple modules, and let $(V)$ and $(W)$ denote their isomorphism classes.
If $\Ext_R^1(V,W) \ne \mathbf 0$, then $W$ is a \emph{successor} of $V$, and $V$ is a \emph{predecessor} of $W$.
Every unfaithful simple module has a unique predecessor (up to isomorphism).
If a simple module $V$ has an unfaithful successor $W$, then $W$ is the unique successor of $V$ up to isomorphism.
We tacitly apply the terminology of predecessors, successors, and so on to simple modules as well as to isomorphism classes of simple modules.
Let $(V)^+$ denote the unique unfaithful successor of $(V)$, if it exists.

\begin{defi}
An ($R$-)\emph{tower} of length $n \in \bN$ is a finite sequence $(W_1)$, $\ldots\,$,~$(W_n)$ of isomorphism classes of simple modules such that
\begin{propenumerate}
  \item\label{tower:distinct} the $(W_i)$ are pairwise distinct,
  \item\label{tower:succ} $(W_i)$ is the unique unfaithful successor of $(W_{i-1})$ for all $i \in [2,n]$,
  \item\label{tower:max} the sequence $(W_1)$, $\ldots\,$,~$(W_n)$ is maximal with respect to \ref*{tower:distinct} and \ref*{tower:succ}.
\end{propenumerate}
\end{defi}

If $T$ is a tower, we write $\length{T}$ for its length.
A tower is \emph{trivial} if its length is $1$.
Towers take one of the following two forms:
\begin{propenumerate}
  \item In a \emph{faithful tower}, $W_1$ is faithful, while $W_2$, $\ldots\,$,~$W_n$ are unfaithful.
    The last module, $W_n$, has no unfaithful successor.
    Thus, the tower is linearly ordered by the successor relationship.
  \item In a \emph{cycle tower}, $W_1$ is the unfaithful successor of $W_n$.
    Thus, all the modules in the tower are unfaithful, and the tower is cyclically ordered by the successor relationship.
    Note that a cyclic permutation of the tower again gives a cycle tower.
    We will consider two cycle towers to be the same if they are cyclic permutations of each other.
\end{propenumerate}

The (isomorphism class of the) module $W_1$ is the \emph{top} of the tower, while $W_n$ is the \emph{base} of the tower.
For a cycle tower this depends on the arbitrary choice of starting point of the enumeration of the cyclically ordered set.

With the convention that cyclic permutations of cycle towers are considered to be the same tower, every isomorphism class of a simple module is contained in a unique tower.

Let $P$ be a finitely generated projective module.
If $W$ is an unfaithful simple module, then $M=\ann_R(W)$ is a maximal ideal of $R$ and $R/M$ is a simple Artinian ring.
Thus, the $R/M$-module $P/PM$ has finite length.
We define the \emph{rank of $P$ at $W$}, denoted by $\rho(P,W)$, to be the length of $P/PM$.
For the zero module, we define $\rho(P,\mathbf 0) = \udim P$.
Let $\modspec(R)$ denote a set of representatives of the isomorphism classes of unfaithful simple modules, together with the zero module.
We use the notation $e_{(W)}$ for the vector in $\bN_0^{\modspec{R}}$ which has a $1$ in the coordinate corresponding to the isomorphism class $(W)$ and zeroes everywhere else.
Then
\[
\Psi(P) = \sum_{W \in \modspec(R)} \rho(P,W) e_{(W)} \in \bN_0^{\modspec(R)}
\]
is the \emph{genus of $P$}.
If $T$ is a tower, we define
\[
\rho(P,T) = \sum_{\substack{(W) \in T \\ \text{$W$ unfaithful}}} \rho(P,W).
\]

The genus of a nonzero finitely generated projective module $P$ has two properties:
\begin{propenumerate}
  \item It has \emph{almost standard rank}, that is, for all but finitely many $W \in \modspec(R)$, it holds that
    \[
    \rho(P,W) = \frac{\udim P}{\udim R} \rho(R,W).
    \]
  \item It has \emph{cycle standard rank}, that is, for all cycle towers $T$, it holds that
    \[
    \rho(P,T) = \frac{\udim P}{\udim R} \rho(R,T).
    \]
\end{propenumerate}
The genus can take arbitrary values subject to these two conditions and $\udim P > 0$.

The value of $\Psi(P)$ only depends on the stable isomorphism class of $P$.
Moreover, $\Psi(P \oplus Q) = \Psi(P) + \Psi(Q)$ for finitely generated projective modules $P$ and $Q$.
Thus, the genus extends to a group homomorphism $\Psi^+$ from the Grothendieck group $K_0(R)$ to a direct product of copies of $\bZ$.
We define the \emph{ideal class group} $\pcg{R}$ of $R$ as $\pcg{R}=\ker(\Psi^+)$.
If $P$ is a finitely generated projective module, we write $[P]$ for its class in $K_0(R)$.

A \emph{base point set} $\cB$ for $R$ is a set consisting of exactly one finitely generated projective module in each genus of nonzero finitely generated projective modules that is closed under direct sums (up to isomorphism).
Given a base point set $\cB$, to each nonzero finitely generated projective module $P$ we can associate a \emph{Steinitz class} $\stzcls{P} = [P] - [\cB(P)] \in \pcg{R}$.
Here, $\cB(P)$ denotes the unique module in $\cB$ for which $\Psi(\cB(P))=\Psi(P)$.
We set $\stzcls{\mathbf 0}=\mathbf 0$.
The definition of the Steinitz class depends on the choice of the base point set, which is not canonical in general.
We will always assume a fixed but unspecified choice of base point set.
\begin{thm}[{\cite[Theorem 35.12 (Main Structure Theorem)]{levy-robson11}}] \label{t-main-structure}
  \mbox{}
  \begin{enumerate}
    \item `Steinitz class' and `genus' are independent invariants of nonzero finitely generated projective $R$-modules.
    \item These invariants are additive on direct sums.
    \item\label{tms:iso} Together, they determine the stable isomorphism class of the module and, if it has uniform dimension 2 or more, its isomorphism class.
  \end{enumerate}
\end{thm}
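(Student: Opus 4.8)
This is the Main Structure Theorem of \cite{levy-robson11}, and a complete proof is beyond the scope of the present paper; I only indicate the shape of the argument one would give, and refer to \cite[Chapter~35 and the development leading to it]{levy-robson11} for details. Part~(2) is immediate from the definitions: $\udim$ and each $\rho(-,W)$ are additive on direct sums, since $(P \oplus Q)/(P \oplus Q)M \cong P/PM \oplus Q/QM$, and $\stzcls{-}$ is additive because $[-]$ is additive in $K_0(R)$ and a base point set is closed under direct sums. The content is in parts~(1) and~(3).

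For part~(3) I would separate the ``stable isomorphism'' assertion from the ``isomorphism'' assertion. The former is essentially formal once the invariants are set up as above: if $\Psi(P) = \Psi(Q)$ then $P$ and $Q$ lie in the same genus, so $\cB(P) = \cB(Q)$, whence $\stzcls{P} = \stzcls{Q}$ forces $[P] = [Q]$ in $K_0(R)$; since moreover $\udim P = \udim Q$, this yields $P \oplus R^n \cong Q \oplus R^n$ for a suitable $n \in \bN_0$, i.e.\ $P$ and $Q$ are stably isomorphic. (The converse is the already-recorded fact that the genus and the Steinitz class depend only on the stable isomorphism class.) The genuine content of~(3) is then a cancellation statement: a finitely generated projective module of uniform dimension at least $2$ is cancellable from direct sums of finitely generated projectives. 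The natural route is Bass's cancellation theorem combined with the fact that an HNP ring has right Krull dimension $1$, hence stable rank at most $2$, so that projectives whose local ranks are everywhere at least $2$ cancel. The step I would want to verify carefully is that ``local rank at least $2$'' is captured precisely by ``uniform dimension at least $2$'' in this context; I expect this to be the subtlest point of~(3).

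Part~(1) reduces to a realizability claim, ``additivity'' being part~(2). I would argue in two stages. First, that every vector in $\bN_0^{\modspec(R)}$ satisfying almost standard rank and cycle standard rank with positive uniform dimension arises as $\Psi(P)$ for some finitely generated projective $P$; this is the constructive heart of the theory. Along a faithful tower one can raise and lower ranks freely by extensions involving the simple modules of the tower, but along a cycle tower the rank is rigidly constrained --- this is exactly the ``cycle standard rank'' condition --- and producing projective right ideals with the prescribed behaviour at each maximal ideal is carried out via idealizer subrings. Second, that with the genus held fixed every class of $\pcg{R} = \ker(\Psi^+)$ occurs as a Steinitz class: starting from one module $P_0$ of the given genus, one modifies $P_0$ --- by twisting with an invertible ideal, or by a non-split extension along a simple module in a faithful tower --- so as to shift $[P_0]$ in $K_0(R)$ by a prescribed element of $\ker(\Psi^+)$ while leaving $\Psi(P_0)$ unchanged. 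Independence of the two invariants follows, since the genus records only $\Psi^+([P])$ and the Steinitz class only the complementary $\ker(\Psi^+)$-part of $[P]$.

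The principal obstacle in this scheme is the realizability of genera in the presence of non-trivial cycle towers; it is precisely this that forces the constructions to be module-theoretic rather than purely ideal-theoretic, and it is the reason cycle towers will also complicate the transfer homomorphism constructed later in this paper. By comparison, the cancellation input for~(3) and the Steinitz-class realizability in~(1) are routine.
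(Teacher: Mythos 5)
The paper offers no proof of this statement at all—it is imported verbatim as Theorem 35.12 of Levy and Robson—so your decision to defer to that monograph is exactly the paper's own approach. Your accompanying sketch (additivity from the definitions, stable isomorphism via $K_0(R)$ together with a Krull-dimension-one cancellation theorem for $\udim \ge 2$, and realizability of genera and Steinitz classes for independence) is broadly consistent with the cited source and is not load-bearing here.
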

Moreover, $K_0(R) \cong \pcg{R} \times \im(\Psi^+)$ by \cite[Corollary 35.17]{levy-robson11}.

\begin{remark}
$R$ is a Dedekind prime ring if and only if all towers are trivial.
Then cycle standard rank forces $\Psi(P)$ to be completely determined by $\udim P$, and $\pcg{R}$ is just the usual ideal class group, defined as $\ker(\udim\colon K_0(R) \to \mathbb Z)$.
(It is also called \emph{locally free class group} in the setting of orders in central simple algebras.)
We recover the well-known result that the stable isomorphism class of a nonzero finitely generated projective module over a Dedekind prime ring is uniquely determined by its ideal class and its uniform dimension (rank).
\end{remark}

If $P$ is a finitely generated projective module and $M$ is a maximal submodule of $P$, we can use results from \cite[\S32]{levy-robson11} to express the genus of $M$ in terms of the genus of $P$ and the simple module $P/M$ as follows.
This will be particularly useful when $P=R$ and $M$ is a maximal right ideal.

\begin{lemma} \label{l-rank}
  Let $V$ be a simple module, let $P$ be a nonzero finitely generated projective module, and let $M \subsetneq P$ be a maximal submodule such that $P/M \cong V$.
  Let $X$ be an unfaithful simple module.
  \begin{enumerate}
    \item If $V$ is contained in a trivial tower, then $\rho(M,X) = \rho(P,X)$.
    \item If $V$ is contained in a non-trivial tower, then
      \[
      \rho(M,X) = \begin{cases}
        \rho(P,X) - 1 &\text{ if $X\cong V$ \textup{(}then $V$ is unfaithful\textup{)},} \\
        \rho(P,X) + 1 &\text{ if $X$ is an unfaithful successor of $V$,} \\
        \rho(P,X)     &\text{ otherwise}.
      \end{cases}
      \]
  \end{enumerate}
\end{lemma}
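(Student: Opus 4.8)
The plan is to compute the genus of $M$ by reducing the short exact sequence $0 \to M \to P \to V \to 0$ modulo the annihilator of $X$, and to match the resulting correction terms with the tower combinatorics. Write $N = \ann_R(X)$; since $X$ is unfaithful, $N$ is a maximal two-sided ideal, $\bar R := R/N$ is simple Artinian, and $\rho(Q,X) = \operatorname{length}_{\bar R}(Q/QN)$ for every finitely generated projective $Q$. First I would apply $-\otimes_R\bar R$ to the short exact sequence; as $P$ is projective, $\operatorname{Tor}_1^R(P,\bar R)=0$, so the long exact $\operatorname{Tor}$-sequence collapses to an exact sequence of finite-length right $\bar R$-modules
\[
0 \to \operatorname{Tor}_1^R(V,\bar R) \to M/MN \to P/PN \to V/VN \to 0 .
\]
Counting lengths over $\bar R$ then gives
\[
\rho(M,X) \;=\; \rho(P,X) + \operatorname{length}_{\bar R}\!\bigl(\operatorname{Tor}_1^R(V,\bar R)\bigr) - \operatorname{length}_{\bar R}(V/VN) ,
\]
so everything reduces to evaluating the two correction terms.

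The term $\operatorname{length}_{\bar R}(V/VN)$ is immediate: $V$ being simple, $VN$ is $0$ or $V$, and $VN=0$ holds exactly when $N\subseteq\ann_R(V)$, i.e.\ (both ideals being maximal) when $\ann_R(V)=N$; this in turn means $V$ is unfaithful and $X\cong V$ (two simple modules over the simple Artinian ring $\bar R$). Hence $\operatorname{length}_{\bar R}(V/VN)$ is $1$ if $X\cong V$ and $0$ otherwise.

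For the $\operatorname{Tor}$-term I would write $V\cong R/I$ with $I$ a maximal right ideal; the resolution $0\to I\to R\to V\to 0$ (with $I$ projective since $R$ is hereditary) gives $\operatorname{Tor}_1^R(V,\bar R)\cong (I\cap N)/IN$. The crucial input — which I would extract from the structure theory of finite-length modules and maximal ideals over HNP rings in \cite[\S32]{levy-robson11}, together with the recalled properties of the successor relation — is that this module vanishes unless $X$ is the unfaithful successor $(V)^+$ of $V$, in which case it has length $1$. Granting this, the lemma follows from the displayed formula by a short case analysis: if $V$ lies in a trivial tower then $V$ is either faithful with no unfaithful successor (both correction terms then vanish for every unfaithful $X$) or equal to its own unfaithful successor (the two correction terms then cancel), so $\rho(M,X)=\rho(P,X)$, which is part~(1); if $V$ lies in a non-trivial tower then $(V)^+\ne V$, so at most one correction term is nonzero and the formula specializes precisely to the three cases of part~(2).

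The genuine obstacle is the claim about $\operatorname{Tor}_1^R(V,\bar R)\cong(I\cap N)/IN$: deciding exactly when $IN$ is properly contained in $I\cap N$, and that the quotient is then simple. This is where the coarse Main Structure Theorem no longer suffices and one must invoke the finer homological structure of HNP rings — the dichotomy between idempotent and invertible maximal ideals and the rigidity of $\Ext^1$ between simple modules, as encoded in the successor/tower formalism of \cite[\S32]{levy-robson11}. Everything else in the argument is routine homological bookkeeping.
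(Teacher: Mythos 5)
Your reduction is set up correctly: tensoring $0\to M\to P\to V\to 0$ with $\bar R=R/N$ and counting lengths over the simple Artinian ring $\bar R$ does give
\[
\rho(M,X)=\rho(P,X)+\operatorname{length}_{\bar R}\operatorname{Tor}_1^R(V,\bar R)-\operatorname{length}_{\bar R}(V/VN),
\]
your analysis of the term $\operatorname{length}_{\bar R}(V/VN)$ is right, and the final case analysis (including the observation that an unfaithful simple module in a trivial tower is its own unfaithful successor, so the two corrections cancel) matches the statement. This is also organized genuinely differently from the paper, which keeps the pair $(P,M)$ throughout, checks that the proofs of \cite[Lemmas 32.14 and 32.15]{levy-robson11} go through under weakened hypotheses, and handles the trivial-cycle-tower case $X\cong V$ by a global argument (cycle standard rank together with $\udim M=\udim P$) rather than by a local computation.

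The step you flag yourself is, however, a genuine gap, and it is the entire mathematical content of the lemma rather than a technicality. Your claim that $\operatorname{Tor}_1^R(V,\bar R)\cong (I\cap N)/IN$ vanishes unless $X\cong (V)^+$ and has length exactly $1$ when $X\cong (V)^+$ is, for $P=R$ and $M=I$, literally equivalent via your displayed formula to the lemma you are proving, so deferring it wholesale to ``the structure theory in \cite[\S 32]{levy-robson11}'' is circular in spirit. Nor does it follow from the background recalled in the paper: using the resolution $0\to I\to R\to V\to 0$ and semisimplicity of $\bar R$ one identifies $\operatorname{length}_{\bar R}\operatorname{Tor}_1^R(V,\bar R)$ with $\dim_{\End_R(X)}\Ext^1_R(V,X)$, so the \emph{vanishing} part does follow from the successor formalism, but the ``length exactly one'' part is the assertion that $\Ext^1_R(V,X)$ is one-dimensional over $\End_R(X)$ whenever $X\cong (V)^+$ --- a rigidity statement about HNP rings (tied to the idempotent/invertible dichotomy, e.g.\ seriality of $R/M^n$ in the cycle case) that the recalled facts about existence and uniqueness of unfaithful successors do not provide, and which you neither prove nor locate precisely. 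To close the gap you could either prove this Ext-rigidity directly, or invoke the Levy--Robson rank lemmas only in the special case $P=R$ (supplemented, as in the paper, by the cycle-standard-rank argument for the trivial-tower case $X\cong V$) and then bootstrap to arbitrary $(P,M)$ through your Tor formula; as written, the core case is assumed rather than established.
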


\begin{proof}
  We note that the proof of \cite[Lemma 32.15(iii)]{levy-robson11} only requires that $\Ext^1_R(V,X) = \mathbf 0$ and that $\ann_R(X) \ne \ann_R(V)$.
  This implies $\rho(M,X)=\rho(P,X)$ if $X$ is neither the unfaithful successor of $V$, nor $X \cong V$.
  Thus, we still have to show the claim if $X$ is an unfaithful successor of $V$ or $X \cong V$.

  Suppose first that $V$ is contained in a trivial tower.
  If $V$ is faithful, then it has no unfaithful successor (by triviality of the tower) and $X \ncong V$ due to $X$ being unfaithful.
  Hence in this case there is nothing left to show, and we may suppose that $V$ is unfaithful.
  Because the tower is trivial, $X\cong V$ is the only remaining possibility.
  Then $\rho(M,V) = \rho(P,V)$ due to cycle standard rank and $\udim M = \udim P$.

  Suppose now that $V$ is contained in a non-trivial tower.
  If $V \cong X$, then the proof of \cite[Lemma 32.15(i)]{levy-robson11} goes through and implies $\rho(M,V) = \rho(P,V)-1$.
  If $X$ is an unfaithful successor of $V$, then, as in \cite[Lemma 32.14(i)]{levy-robson11}, there exists $N \subset M$ such that $M/N\cong X$ and $P/N$ is uniserial.
  Now \cite[Lemma 32.15(ii)]{levy-robson11} implies $\rho(M,X) = \rho(P,X) + 1$.
\end{proof}

If $M \subset P$ are finitely generated projective modules, then $P/M$ has finite length if and only if $\udim P = \udim M$ by \cite[Corollary 12.17]{levy-robson11}.
As an immediate consequence of this and the previous lemma, we have the following.

\begin{cor} \label{cor-genus-of-simples}
  Let $V$ be a simple module, let $P$ be a nonzero finitely generated projective module, and let $M \subsetneq P$ be a maximal submodule such that $P/M \cong V$.
  \begin{enumerate}
    \item
      If $V$ is contained in a trivial tower, then $\Psi(M) = \Psi(P)$.
    \item
      If $V$ is contained in a non-trivial tower, then
      \[
      \Psi(M) = \begin{cases} \Psi(P) + e_{(V)^+} - e_{(V)} &\text{if $V$ is contained in a cycle tower,} \\
        &\text{or $V$ is any but the top or bottom}\\
        &\text{of a faithful tower,}\\
        \Psi(P) + e_{(V)^+} & \text{if $V$ is the top of a faithful tower,} \\
        \Psi(P) - e_{(V)}  & \text{if $V$ is the bottom of a faithful tower.}
      \end{cases}
      \]
  \end{enumerate}
  With the convention $e_{(V)}=\mathbf 0$ if $V$ is faithful and $e_{(V)^+}=\mathbf 0$ if $V$ does not have an unfaithful successor,
  \[
  \Psi(M) = \Psi(P) + e_{(V)^+} - e_{(V)}
  \]
  in any case.
\end{cor}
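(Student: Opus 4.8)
The plan is to deduce \cref{cor-genus-of-simples} directly from \cref{l-rank} by comparing the genera of $M$ and $P$ coordinate by coordinate. First I would observe that, since $P/M \cong V$ has finite length, $\udim(P/M)=0$ and hence $\udim M = \udim P$; thus the $\mathbf 0$-coordinate of the genus, namely $\rho(\,\cdot\,,\mathbf 0) = \udim(\,\cdot\,)$, agrees for $M$ and $P$. It therefore remains only to compare $\rho(M,X)$ with $\rho(P,X)$ for each unfaithful simple $X \in \modspec(R)$, and that is exactly what \cref{l-rank} records.

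If $V$ lies in a trivial tower, \cref{l-rank}(1) gives $\rho(M,X) = \rho(P,X)$ for every unfaithful $X$, whence $\Psi(M) = \Psi(P)$; this is assertion~(1). If $V$ lies in a non-trivial tower, then by \cref{l-rank}(2) the genus of $M$ differs from that of $P$ in at most two coordinates: the $(V)$-coordinate decreases by $1$ — and this coordinate is present precisely when $V$ is unfaithful — while the $(V)^+$-coordinate increases by $1$ — and this coordinate is present precisely when $V$ has an unfaithful successor. Since the tower is non-trivial, whenever both $(V)$ and $(V)^+$ exist they are distinct, so there is no cancellation between the two contributions.

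It then remains to spell this out according to the position of $V$ in its non-trivial tower. If $V$ is the top of a faithful tower, $V$ is faithful and, the tower being non-trivial, $(V)^+$ exists and is unfaithful, so $\Psi(M) = \Psi(P) + e_{(V)^+}$. If $V$ is the bottom of a faithful tower, $V$ is unfaithful and has no unfaithful successor, so $\Psi(M) = \Psi(P) - e_{(V)}$. In every remaining case — $V$ an interior module of a faithful tower, or $V$ in a cycle tower — $V$ is unfaithful and its unfaithful successor $(V)^+$ exists and is again unfaithful, so $\Psi(M) = \Psi(P) + e_{(V)^+} - e_{(V)}$. Under the stated conventions ($e_{(V)} = \mathbf 0$ for $V$ faithful, $e_{(V)^+} = \mathbf 0$ when $V$ has no unfaithful successor) all of these, and also assertion~(1), collapse into the single identity $\Psi(M) = \Psi(P) + e_{(V)^+} - e_{(V)}$; for assertion~(1) one notes that a simple module in a trivial tower is either faithful, so that both terms vanish, or unfaithful, in which case, being in a trivial tower, it is its own unfaithful successor, so $(V)^+ = (V)$ and the two terms cancel.

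I do not expect any genuine difficulty: all the content is carried by \cref{l-rank}, and what is left is bookkeeping about the combinatorics of towers. The one point I would handle with a little care is the trivial-tower case, together with the convention that a one-element cycle tower $V$ satisfies $(V)^+ = (V)$, which is what makes the uniform formula valid there as well.
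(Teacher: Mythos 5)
Your proposal is correct and is exactly the paper's route: the paper states this corollary as an immediate consequence of \cref{l-rank}, and your coordinate-by-coordinate bookkeeping (including $\udim M=\udim P$ and the observation that an unfaithful simple in a trivial tower is its own unfaithful successor, so the uniform formula degenerates correctly) is precisely the intended argument.
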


Occasionally, we will also need the following.
Recall that for a cycle tower the choice of top is arbitrary, but for a faithful tower the faithful simple module is the top.
\begin{lemma} \label{l-ex-uniserial}
  Let $P$ be a finitely generated projective module.
  Let $T$ be a tower with top $V$.
  If $\rho(P,V) \ne 0$, then there exists a submodule $P' \subset P$ such that $P/P'$ is a uniserial module with composition series, from top to bottom, precisely the modules of $T$.
\end{lemma}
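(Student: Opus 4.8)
The plan is to decouple the statement into a purely module-theoretic existence assertion and a lifting argument: I would (1) deduce from the hypothesis that the top $V=W_1$ is a homomorphic image of $P$, (2) construct a uniserial module $U$ whose composition series from top to bottom reads $W_1,\dots,W_n$, and (3) use the projectivity of $P$ to turn the surjection $P\twoheadrightarrow V$ into a surjection $P\twoheadrightarrow U$, whose kernel is the desired $P'$.

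For (1): if $V$ is unfaithful, put $M=\ann_R(V)$, so $R/M$ is simple Artinian with $V$ as its unique simple module; then $P/PM$ is a finite-length semisimple $R/M$-module, hence $P/PM\cong V^{\rho(P,V)}$, and since $\rho(P,V)\neq 0$ it surjects onto $V$, hence so does $P$. If $V$ is faithful (so $T$ is a faithful tower), one reads ``$\rho(P,V)\neq 0$'' as $P\neq\mathbf 0$, and then $V$ is a homomorphic image of every nonzero finitely generated projective module by the module theory of \cite{levy-robson11}.

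For (2): I would build $U$ from the top, forming one non-split extension at a time. Suppose $U_k$ is uniserial with composition factors $W_1,\dots,W_k$ from the top, so that $\soc(U_k)\cong W_k$. From $0\to W_k\to U_k\to U_k/\soc(U_k)\to 0$ and the vanishing $\Ext^2_R(U_k/\soc(U_k),W_{k+1})=\mathbf 0$ (here $R$ is hereditary), the restriction map $\Ext^1_R(U_k,W_{k+1})\to\Ext^1_R(\soc(U_k),W_{k+1})=\Ext^1_R(W_k,W_{k+1})$ is surjective. Since $W_{k+1}$ is the unfaithful successor of $W_k$, the target is nonzero, so I may choose a class in $\Ext^1_R(U_k,W_{k+1})$ with nonzero restriction; the corresponding extension $0\to W_{k+1}\to U_{k+1}\to U_k\to 0$ then has simple socle $W_{k+1}$ — any other simple submodule of $U_{k+1}$ would map isomorphically onto $\soc(U_k)$ and split the restriction of the chosen class, contradicting its choice — and it is therefore uniserial with composition factors $W_1,\dots,W_{k+1}$ from the top. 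Starting from $U_1=V$ and iterating, I obtain $U=U_n$. The construction uses only the successor links $W_i\to W_{i+1}$, so it applies uniformly to faithful and to cycle towers; alternatively, the existence of such a $U$ can be quoted from the theory of uniserial modules over HNP rings in \cite[\S32]{levy-robson11}.

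For (3): since $U$ is uniserial, $\rad(U)$ is its unique maximal submodule and $U/\rad(U)\cong W_1=V$. As $P$ is projective and both $P$ and $U$ surject onto $V$, the surjection $P\twoheadrightarrow V$ lifts along $U\twoheadrightarrow V$ to some $f\colon P\to U$; then $f(P)$ is not contained in $\rad(U)$, and because $\rad(U)$ is the only maximal submodule of $U$ this forces $f(P)=U$. Taking $P'=\ker f$ gives $P/P'\cong U$, as required. The formal parts are (1) and (3); the real work is (2), namely keeping each intermediate extension \emph{uniserial} rather than merely indecomposable, which is exactly where hereditariness of $R$ is used (to make the relevant $\Ext^1$-restriction maps surjective, so that an extension class with the required behaviour on the socle can be found). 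A small point worth isolating separately is the faithful-top case, where the rank hypothesis has to be reinterpreted as $P\neq\mathbf 0$.
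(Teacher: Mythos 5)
Your argument is correct, but it is a genuinely different route from the paper: the paper proves this lemma by a one-line citation of \cite[Lemma 32.18]{levy-robson11}, whereas you reconstruct it from scratch via an $\Ext$-induction plus a projectivity lifting. The skeleton is sound: hereditariness makes $\Ext^1_R(U_k,W_{k+1})\to\Ext^1_R(\soc(U_k),W_{k+1})$ surjective, a class with nonzero restriction forces the socle of the extension to be $W_{k+1}$ (your pullback/splitting argument is right), and since $U_{k+1}/W_{k+1}\cong U_k$ is uniserial and every nonzero submodule of a finite-length module with simple socle contains that socle, $U_{k+1}$ is uniserial -- this last observation is the one step you leave implicit, and it is worth a sentence, since ``simple socle'' alone does not imply uniserial. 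The other point to tighten is the faithful-top case of step (1): the assertion that every faithful simple module is a homomorphic image of every nonzero finitely generated projective $P$ is true but deserves either a precise reference or the short trace-ideal argument (the trace of $P$ is a nonzero two-sided ideal, hence not contained in the annihilator-free maximal right ideal defining $V$, so some $P\to R\to V$ is nonzero); your reading of the hypothesis as $P\ne\mathbf 0$ in this case matches how the lemma is used in the paper. What the two approaches buy: the citation is shorter and inherits Levy--Robson's bookkeeping, while your construction is self-contained, makes visible exactly where hereditariness and the successor condition $\Ext^1_R(W_k,W_{k+1})\ne\mathbf 0$ enter, and works uniformly for faithful and cycle towers.
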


\begin{proof}
  By \cite[Lemma 32.18]{levy-robson11}.
\end{proof}

A module $M$ is stably free if $M$ is stably isomorphic to $R^m$ for some $m \in \bN_0$.
That is, there exists $n \in \bN_0$ such that $M \oplus R^n \cong R^m \oplus R^n$.
By \cite[Corollary 35.6]{levy-robson11}, this is equivalent to $M \oplus R \cong R^m \oplus R$.
Since $m \udim(R) =\udim(M)$, any nonzero stably free right ideal $I$ of $R$ satisfies $\udim I=\udim R$, and is therefore a right $R$-ideal.
Note that a finitely generated stably free module $M$ is necessarily finitely generated projective and $\udim(M)$ is a multiple of $\udim(R)$.
If $\udim(M) \ge 2$ and $M$ is stably free, then $M$ is free by \cite[Corollary 35.6]{levy-robson11}.
Thus $M$ can only be non-free but stably free if $\udim(M)=\udim(R)=1$, that is, $R$ is a domain and $M$ is isomorphic to a right $R$-ideal.

In \cref{sec-transfer-hom}, we will need to impose the condition that every stably free right $R$-ideal is free.
By the previous paragraph, this is equivalent to every finitely generated stably free $R$-module being free.
The notion is left/right symmetric by dualization of finitely generated projective modules.
A ring having this property is sometimes called a \emph{Hermite ring} (see \cite[Chapter I.4]{lam06}).

Each of the following conditions is sufficient for every stably free right $R$-ideal to be free:
\begin{propenumerate}
  \item $R$ is commutative,
  \item $\udim R \ge 2$,
  \item $\cO$ is a Dedekind domain with quotient field a global field $K$, and $R$ is a classical hereditary $\cO$-order in a central simple algebra $A$ over $K$ such that $A$ satisfies the Eichler condition relative to $\cO$ (see \cite[Theorem 38.2]{reiner75}).
\end{propenumerate}
In the last case, if $K$ is a number field and $\cO$ is its ring of algebraic integers, then $A$ satisfies the Eichler condition with respect to $\cO$ unless $A$ is a totally definite quaternion algebra.
If $\cO$ is a ring of algebraic integers and $R$ is a classical hereditary $\cO$ order in a totally definite quaternion algebra, there exists a full classification of when every stably free right $R$-ideal is free (see \cite{vigneras76,hallouin-maire06,smertnig15}).

\subsection{Factorizations and transfer homomorphisms}

Let $(H,\cdot)$ be a monoid.
By $H^\times$ we denote the group of units of $H$.
The monoid $H$ is \emph{reduced} if $H^\times=\{1\}$.
An element $u \in H \setminus H^\times$ is an \emph{atom} if $u=ab$ with $a$,~$b \in H$ implies $a \in H^\times$ or $b \in H^\times$.
The set of atoms of $H$ is denoted by $\cA(H)$.
If $u \in \cA(H)$ and $\varepsilon$,~$\eta \in H^\times$ then also $\varepsilon u \eta \in \cA(H)$.
If every non-unit of $H$ can be written as a product of atoms, then $H$ is \emph{atomic}.

To be able to define arithmetical invariants, intended to measure the extent of non-uniqueness of factorizations in an atomic monoid, it is first necessary to give precise definitions of factorizations of an element and of distances between factorizations.
The following definitions were introduced in \cite{smertnig13,baeth-smertnig15} in the setting of cancellative small categories.
We recall them for monoids, as this will be sufficiently general for the present paper.

A first attempt may be to call an element of the free monoid on atoms of $H$, denoted by $\fmon{\cA(H)}$, a factorization.
Then the factorizations of an element $a \in H$ are all those formal products in $\fmon{\cA(H)}$ which, when multiplied out in $H$, give $a$ as a product.
This works well if $H$ is reduced.
However, in the presence of non-trivial units, this approach has two drawbacks.
First, if $u$,~$v \in \cA(H)$ and $\varepsilon \in H^\times$, then trivially $uv=(u\varepsilon)(\varepsilon^{-1}v)$.
It is more natural to consider these to be the same factorization (e.g., for the number of factorizations of a given element to be a more meaningful measure).
Second, to avoid having to treat units as a special case, it is preferable for units to also have (trivial) factorizations.
If $H$ is reduced, the empty product is the unique factorization of $1_H$.
In the presence of other units, we need one such trivial factorization for each unit.
The following definition takes care of both of these issues by `tagging' an empty factorization with a unit, and by factoring out a suitable congruence relation to deal with the trivial insertion of units.

We endow the cartesian product $H^\times \times \fmon{\cA(H)}$ with the following operation:
If $(\varepsilon,y)$,~$(\varepsilon',y') \in H^\times \times \fmon{\cA(H)}$ where $y=u_1\cdots u_k$ and $y'=v_1\cdots v_l$ with $u_1$, $\ldots\,$,~$u_k$, $v_1$, $\ldots\,$,~$v_l \in \cA(H)$, then
\[
(\varepsilon,y)(\varepsilon',y') =
\begin{cases} (\varepsilon,u_1\cdots (u_k\varepsilon')v_1\cdots v_l) &\text{if $k >0$,} \\
              (\varepsilon\varepsilon', v_1\cdots v_l)               &\text{if $k=0$}.
\end{cases}
\]
With this product $H^\times \times \fmon{\cA(H)}$ is a monoid.
On $H^\times \times \fmon{\cA(H)}$, we define a congruence $\sim$ by $(\varepsilon,y)\sim(\varepsilon',y')$ if all of the following hold:
\begin{propenumerate}
  \item $k=l$,
  \item $\varepsilon u_1\cdots u_k=\varepsilon' v_1\cdots v_k$ as product in $H$,
  \item either $k=0$, or there exist $\delta_2$, $\ldots\,$,~$\delta_k \in H^\times$ and $\delta_{k+1}=1$ such that
    \[
    \varepsilon' v_1 = \varepsilon u_1 \delta_2^{-1} \quad\text{and}\quad v_i = \delta_i u_i \delta_{i+1}^{-1} \quad\text{for all $i \in [2,k]$.}
    \]
\end{propenumerate}
\begin{defi}
  The quotient $\sZ^*(H) = H^\times \times \fmon{\cA(H)}/\sim$ is the \emph{monoid of \textup{(}rigid\textup{)} factorizations of $H$}.
  The class of $(\varepsilon,u_1\cdots u_k)$ in $\sZ^*(H)$ is denoted by $\rf[\varepsilon]{u_1,\cdots,u_k}$.
  The symbol $\rfop$ also denotes the operation on $\sZ^*(H)$.
  There is a natural homomorphism
  \[
  \pi=\pi_H \colon \sZ^*(H) \to H, \quad \rf[\varepsilon]{u_1,\cdots,u_k} \mapsto \varepsilon u_1\cdots u_k.
  \]
  For $a \in H$, the set $\sZ^*(a)=\sZ_H^*(a) = \pi^{-1}(a)$ is the set of \emph{\textup{(}rigid\textup{)} factorizations of $a$}.
  If $z=\rf[\varepsilon]{u_1,\cdots,u_k}$, then $\length{z}=k$ is the \emph{length} of $z$.
\end{defi}

By construction, $\rf[\varepsilon]{u_1,\cdots,u_{i},u_{i+1},\cdots,u_k}=\rf[\varepsilon]{u_1,\cdots,u_i\delta^{-1},\delta u_{i+1},\cdots,u_k}$ for all $\delta \in H^\times$ and $i \in [1,k-1]$.
Similarly, $\rf[\varepsilon]{u_1,\cdots,u_k} = \rf{(\varepsilon u_1),\cdots,u_k}$ if $k \ge 1$.
In particular, we may represent $z$ as $z=\rf{u_1',\cdots,u_k'}$ with atoms $u_1'$, $\ldots\,$,~$u_k'$, omitting the unit at the beginning, as long as $k \ge 1$ (equivalently, $\pi(z) \not\in H^\times$).

If $H$ is reduced, then $\sZ^*(H) \cong \fmon{\cA(H)}$.

\begin{remark}
For $a \in H$, denote by $[aH,H]$ the set of all principal right ideals $bH$ of $H$ with $aH \subset bH \subset H$.
The set $[aH,H]$ is partially ordered by set inclusion.
There is a natural bijection between $\sZ^*(a)$ and maximal chains of finite length in $[aH,H]$, given by
\[
\rf[\varepsilon]{u_1,\cdots,u_k} \quad\leftrightarrow\quad aH=\varepsilon u_1\cdots u_kH \subsetneq \varepsilon u_1\cdots u_{k-1}H \subsetneq \cdots \subsetneq \varepsilon u_1H \subsetneq H.
\]
The restriction to chains of finite lengths corresponds to the fact that we consider representations of $a$ as finite products.
The restriction to maximal such chains corresponds to the factors being atoms.
\end{remark}

We now recall the concept of a \emph{\textup{(}weak\textup{)} transfer homomorphism} in a setting sufficiently general for the present paper.
See \cite{baeth-smertnig15} for a more general definition.

\begin{defi}\label{th}
  Let $H$ be a monoid and let $T$ be a reduced commutative monoid.
  \begin{enumerate}
    \item\label{th:th}
    A homomorphism $\theta\colon H \to T$ is called a \emph{transfer homomorphism} if it has the following properties:
    \begin{enumerate}[label=\textup{(\textbf{T\arabic*})},ref=\textup{(T\arabic*)},leftmargin=*]
      \item\label{th:units} $\theta$ is surjective and $\theta^{-1}(\{1\})=H^{\times}$.
      \item\label{th:lift} If $a \in H$, $s$, $t \in T$ and $\theta(a)=st$, then there exist $b$,~$c \in H$ such that $a = bc$, that $\theta(b) = s$, and that $\theta(c) = t$.
    \end{enumerate}

  \item\label{th:wth}
    Suppose $T$ is atomic.
    A homomorphism $\theta\colon H \rightarrow T$ is called a \emph{weak transfer homomorphism} if it has the following properties:
    \begin{enumerate}[label=\textup{(\textbf{WT\arabic*})},ref=\textup{(WT\arabic*)},leftmargin=*]
      \item[\textbf{(T1)}] $\theta$ is surjective and $\theta^{-1}(\{1\})=H^{\times}$.
        \setcounter{enumii}{1}
      \item\label{wth:lift} If $a \in H$, $k \in \bN$, $v_1$, $\ldots\,$,~$v_k \in \cA(T)$, and $\theta(a)=v_1\cdots v_k$, then there exist $u_1$, $\ldots\,$,~$u_k \in \cA(H)$ and a permutation $\sigma \in \fS_k$ such that $a=u_1\cdots u_k$ and $\theta(u_i) = v_{\sigma(i)}$ for all $i \in [1,k]$.
    \end{enumerate}
  \end{enumerate}
\end{defi}

If $T$ is atomic, every transfer homomorphism $\theta\colon H \to T$ is also a weak transfer homomorphism.
If $\theta$ is a (weak) transfer homomorphism and $u \in H$, then $u \in \cA(H)$ if and only if $\theta(u) \in \cA(T)$.
If $\theta\colon H \to T$ is a transfer homomorphism and $\theta(a) = v_1\cdots v_k$ with $v_1$, $\ldots\,$,~$v_k \in \cA(T)$, then there exists a rigid factorization $z=\rf[\varepsilon]{u_1,\cdots,u_k}$ of $a$ with $\theta(u_i)=v_i$ for $i \in [1,k]$.
Thus, a transfer homomorphism $\theta$ induces a surjective homomorphism $\sZ^*(H) \to \sZ^*(T)$.

To define more fine grained arithmetical invariants than those based on sets of lengths, it is necessary to be able to compare two factorizations of an element.
For this, we use \emph{distances} between factorizations.

\begin{defi} \label{d-distance}
  Let $D = \{\, (z,z') \in \sZ^*(H) \times \sZ^*(H) \mid \pi(z)=\pi(z') \,\}$.
  A \emph{distance on $H$} is a map $\sd\colon D \to \bN_0$ having the following properties for all $z$,~$z'$,~$z'' \in \sZ^*(H)$ with $\pi(z)=\pi(z')=\pi(z'')$ and all $x \in \sZ^*(H)$:
  \begin{enumerate}[label=\textup{(\textbf{D\arabic*})},ref=\textup{(D\arabic*)},leftmargin=*]
    \item\label{d:ref} $\sd(z,z) = 0$.
    \item\label{d:sym} $\sd(z,z') = \sd(z',z)$.
    \item\label{d:tri} $\sd(z,z') \le \sd(z,z'') + \sd(z'',z')$.
    \item\label{d:mul} $\sd(x\rfop z, x \rfop z') = \sd(z,z') = \sd(z \rfop x, z' \rfop x)$.
    \item\label{d:len} $\abs[\big]{\length{z} - \length{z'}} \le \sd(z,z') \le \max\left\{ \length{z}, \length{z'}, 1 \right\}$.
  \end{enumerate}
\end{defi}

If $F=\famon{X}$ is a free abelian monoid with basis $X$ and $x$,~$y \in F$, then $x=x_0z$ and $y=y_0z$ with $z=\gcd(x,y)$ and suitable cofactors $x_0$,~$y_0 \in F$.
We set $\mathsf d_F(x,y) = \max\{\length{x_0},\length{y_0}\}$.
Suppose $\sim$ is an equivalence relation on $\cA(H)$ such that $u = \varepsilon v \eta$ with $\varepsilon$,~$\eta \in H^\times$ implies $u \sim v$.
Then there is a natural homomorphism $\varphi\colon \sZ^*(H) \to F=\famon{\cA(H)/\sim}$, and $\sd_\sim = \sd_F \circ (\varphi \times \varphi)$, restricted to $D$, is a distance on $H$ (see \cite[Construction 3.3(2)]{baeth-smertnig15}).
In other words, to evaluate $\sd_\sim(z,z')$, we first remove as many pairs of atoms from $z$ and $z'$ that are equivalent under $\sim$ as possible, and then take the maximum length of the remaining factorizations as the distance.
For the \emph{permutable distance}, denoted by $\sd_p$, we use $u \sim v$ if and only if $u = \varepsilon v \eta$ for some $\varepsilon$,~$\eta \in H^\times$.

The permutable distance is the distance typically used in the commutative setting.
If $H=R^\bullet$ is the monoid of non-zero-divisors of the HNP ring $R$, there are two other important distances constructed in this fashion.
With the choice of $u \sim v$ if $R/uR \cong R/vR$, we obtain the \emph{similarity distance}, denoted by $\sd_{\textup{sim}}$.

If $a \in R^\bullet$, then $aR$ is an essential right ideal of $R$ and $R/aR$ has finite length.
Thus, we can define another distance by defining $u\sim v$ if $R/uR$ and $R/vR$ have equivalent composition series.
We call this the \emph{composition distance} and denote it by $\sd_{\textup{cs}}$.

\begin{remark}
  \begin{enumerate}
    \item
      Utilizing the transpose (see \cite[\S17]{levy-robson11}), we see that the definitions of similarity and composition distance are in fact symmetric with respect to taking left or right modules: $R/uR\cong R/vR$ if and only if $R/Ru \cong R/Rv$ and moreover $R/uR$ and $R/vR$ have equivalent composition series if and only if the same is true for $R/Ru$ and $R/Rv$.

    \item If $R$ is commutative, then $\sd_{\textup{cs}}=\sd_{\textup{sim}}=\sd_{p}$.
  \end{enumerate}
\end{remark}

Finally, for any monoid $H$, the \emph{rigid distance}, denoted by $\sd^*$, is defined as in \cite[Construction 3.3(1)]{baeth-smertnig15}.
Essentially, one counts how many insertions, deletions, or replacements of atoms are necessary to pass from $z$ to $z'$.
(With some added technicalities to account for the possible presence of units.)

If $\sd$ is a distance on $H$, we may define a congruence relation $\sim_\sd$ on $\sZ^*(H)$ where $z \sim_\sd z'$ if and only if $\pi(z)=\pi(z')$ and $\sd(z,z')=0$.
Then $\sZ_\sd(H)=\sZ^*(H)/\sim_\sd$ is called the \emph{monoid of $\sd$-factorizations}.
We say that $H$ is \emph{$\sd$-factorial} if the induced homomorphism $\sZ_\sd(H) \to H$ is bijective, that is, every element of $H$ has precisely one $\sd$-factorization.
For $\sd=\sd^*$ [$\sd_{\text{sim}}$, $\sd_{\text{cs}}$, $\sd_p$], we say that $H$ is \emph{rigidly \textup{[}similarity, composition series, permutably\textup{]} factorial}.

For any distance we can now define a catenary degree.
\begin{defi}[Catenary degree]
  Let $H$ be atomic and let $\sd$ be a distance on $H$.
  \begin{enumerate}
    \item
      Let $a \in H$ and $z$,~$z' \in \sZ^*(H)$.
      A finite sequence of rigid factorizations $z_0$, $\ldots\,$,~$z_n$ of $a$ is called an $N$-chain (in distance $\sd$) between $z$ and $z'$ if
      \[
      z=z_0, \quad \sd(z_{i-1},z_i) \le N\text{ for all }i\in [1,n], \quad \text{and }z_n=z'.
      \]

    \item The \emph{catenary degree \textup{(}in distance $\sd$\textup{)} of $a$}, denoted by $\sc_\sd(a)$, is the minimal $N \in \bN_0 \cup \{\infty\}$ such that for any two factorizations $z$,~$z' \in \sZ^*(a)$ there exists an $N$-chain between $z$ and $z'$.

    \item The \emph{catenary degree \textup{(}in distance $\sd$\textup{)} of $H$} is
      \[
      \sc_\sd(H) = \sup\{\, \sc_\sd(a) \mid a \in H \,\} \in \bN_0 \cup \{\infty\}.
      \]
  \end{enumerate}
\end{defi}
For specific distances, we write $\sc_{\textup{sim}}(H)$ instead of $\sc_{\sd_{\textup{sim}}}(H)$, etc.
An atomic monoid $H$ is $\sd$-factorial if and only if $\sc_\sd(H)=0$.

Let $\theta\colon H \to T$ be a transfer homomorphism.
We say that two factorization $z=\rf[\varepsilon]{u_1,\cdots,u_k}$ and $z'=\rf[\varepsilon']{u_1',\cdots,u_k'}$ in $\sZ^*(H)$ with $\pi(z)=\pi(z')$ \emph{lie in the same \textup{(}permutable\textup{)} fiber} (of $\theta$), if there exists a permutation $\sigma \in \fS_k$ such that $\theta(u_i)=\theta(u_{\sigma(i)}')$ for all $i \in[1,k]$.
This is equivalent to $z$ and $z'$ lying in the same fiber of the natural extension of $\theta$ to $\sZ^*(H) \to \sZ^*(T) \to \sZ_p(T)=\famon{\cA(T)}$.

\begin{defi}[Catenary degree in the fibers]
  Let $\sd$ be a distance on $H$ and suppose that $H$ is atomic.
  Let $T$ be a reduced commutative monoid, and let $\theta\colon H \to T$ be a transfer homomorphism.
  \begin{enumerate}
    \item
      An \emph{$N$-chain in the \textup{(}permutable\textup{)} fiber} of $z$ is an $N$-chain all of whose factorizations lie in the fiber of $z$.

    \item
      We define $\sc_\sd(a,\theta)$ to be the smallest $N \in \bN_0 \cup \{\infty\}$ such that, for any two rigid factorizations $z$,~$z'$ of $a$ lying in the same fiber, there exists an $N$-chain in the fiber between $z$ and $z'$.
      The \emph{catenary degree in the \textup{(}permutable\textup{)} fibers} is
      \[
      \sc_\sd(H,\theta) = \sup\{\, \sc_\sd(a,\theta) \mid a \in H\,\} \in \bN_0 \cup \{\infty\}.
      \]
  \end{enumerate}
\end{defi}

For $a \in H$, we call $\sL(a)=\sL_H(a)=\{\, \length{z} \mid z \in \sZ_H^*(a) \,\} \subset \bN_0$ the \emph{set of lengths} of $a$.
The monoid $H$ is \emph{half-factorial} if $\card{\sL(a)}=1$ for all $a \in H$.
By $\cL(H) = \{\, \sL(a) \mid a \in H \,\} \subset \bP(\bN_0)$ we denote the \emph{system of sets of lengths}.
Several arithmetical invariants are defined in terms of the system of sets of lengths.

For instance, if $L \subset \bN_0$ and $d \in \bN$, then $d$ is a \emph{distance} of $L$ if there exist $k$,~$l \in L$ with $l-k=d$ and the interval $[k,l]$ contains no further elements of $L$.
Then $\Delta(L)$ denotes the set of distances of $L$, and $\Delta(H) = \smash\bigcup_{L \in \cL(H)} \Delta(L)$ is the \emph{set of distances of $H$}.
(The terminology `distance' here is unrelated to the one in \cref{d-distance}.)
See the survey \cite{geroldinger16} for further arithmetical invariants.

The following theorem shows that transfer homomorphisms preserve many arithmetical invariants.

\begin{thm} \label{t-transfer}
  Let $H$ be a monoid, $T$ an atomic reduced commutative monoid.
  \begin{enumerate}
  \item\label{t-transfer:len}
    Let $\theta\colon H \to T$ be a weak transfer homomorphism.
    For all $a \in H$, we have $\sL_H(a) = \sL_T(\theta(a))$.
    In particular, $\cL(H) = \cL(T)$ and all arithmetical invariants defined in terms of lengths coincide for $H$ and $T$.
  \item\label{t-transfer:cat} If $\theta\colon H \to T$ is a transfer homomorphism, then
      \begin{align*}
        \sc_\sd(H) & \le \max\big\{ \sc_p(T), \sc_\sd(H, \theta) \big\}.
      \end{align*}
  \end{enumerate}
\end{thm}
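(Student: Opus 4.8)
The plan is to prove part~\ref{t-transfer:len} by directly unwinding the definition of a weak transfer homomorphism, and part~\ref{t-transfer:cat} by lifting, through $\theta$, an $N$-chain first produced in $\sZ_p(T)$.

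For part~\ref{t-transfer:len} I would use the fact recorded just after \cref{th} that a weak transfer homomorphism satisfies $u \in \cA(H)$ if and only if $\theta(u) \in \cA(T)$. Given a factorization $a = u_1\cdots u_k$ into atoms of $H$, the image $\theta(a) = \theta(u_1)\cdots\theta(u_k)$ is a factorization of the same length into atoms of $T$, whence $\sL_H(a) \subset \sL_T(\theta(a))$; the case $a \in H^\times$ (which forces $k = 0$, and $\sL_T(\theta(a)) = \{0\}$ because $T$ is reduced) is trivial. Conversely, if $\theta(a) = v_1\cdots v_k$ with $v_i \in \cA(T)$ and $k \ge 1$, then \ref{wth:lift} supplies $u_1, \ldots, u_k \in \cA(H)$ with $a = u_1\cdots u_k$, so $k \in \sL_H(a)$; and $k = 0$ forces $a \in H^\times$. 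Hence $\sL_H(a) = \sL_T(\theta(a))$ for every $a \in H$, and surjectivity of $\theta$ then gives $\cL(H) = \cL(T)$, so every invariant defined from the system of sets of lengths (such as $\Delta(H)$) agrees for $H$ and $T$. I expect no difficulty here.

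For part~\ref{t-transfer:cat}, put $N = \max\{\sc_p(T), \sc_\sd(H,\theta)\}$; it suffices to show $\sc_\sd(a) \le N$ for each $a \in H$. Fix $a$ (the case $a \in H^\times$ being trivial) and $z, z' \in \sZ^*(a)$; if these already lie in the same $\theta$-fiber we are done by $\sc_\sd(H,\theta) \le N$. Otherwise their images in $\sZ_p(T) = \famon{\cA(T)}$ are distinct factorizations of $\theta(a)$, and since $\sc_p(T) \le N$ there is an $N$-chain $y_0, \ldots, y_n$ (with $n \ge 1$, and each step non-trivial after deleting repetitions) in the permutable distance joining them, with $\sd_p(y_{i-1}, y_i) \le N$. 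For each $i$ I would fix a rigid factorization $\hat z_i \in \sZ^*(a)$ whose image is $y_i$, with $\hat z_0 = z$ and $\hat z_n = z'$; such lifts exist because $\theta$ is a transfer homomorphism (pick an ordering of $y_i$ into atoms of $T$ and lift it, as noted after \cref{th}). It then remains to join each consecutive pair $\hat z_{i-1}, \hat z_i$ by an $N$-chain inside $\sZ^*(a)$ and concatenate.

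The connecting step is where the work lies. Write $y_{i-1} = c\,d$ and $y_i = c\,d'$ in $\famon{\cA(T)}$ with $d, d'$ coprime, so that $\max\{\length{d}, \length{d'}\} = \sd_p(y_{i-1}, y_i) \le N$ and, by cancellativity, $\pi_T(d) = \pi_T(d')$; call this element $b$. Apply \ref{th:lift} once to $a$ to obtain $a = a_1 a_2$ with $\theta(a_1) = \pi_T(c)$ and $\theta(a_2) = b$, then lift $c$ to a rigid factorization $x$ of $a_1$ and lift $d$, $d'$ to rigid factorizations $w$, $w'$ of $a_2$. The key point is that $z_{i-1}^\flat = x \rfop w$ and $z_i^\flat = x \rfop w'$ both lie in $\sZ^*(a)$, have images $y_{i-1}$ and $y_i$, hence lie in the same $\theta$-fibers as $\hat z_{i-1}$ and $\hat z_i$; by \ref{d:mul} and \ref{d:len} we get $\sd(z_{i-1}^\flat, z_i^\flat) = \sd(w, w') \le \max\{\length{w}, \length{w'}, 1\} \le N$, while $\sc_\sd(H,\theta) \le N$ furnishes $N$-chains inside the two fibers joining $\hat z_{i-1}$ to $z_{i-1}^\flat$ and $z_i^\flat$ to $\hat z_i$. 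Splicing these yields the desired $N$-chain from $\hat z_{i-1}$ to $\hat z_i$. The one genuine obstacle is that atoms inside a rigid factorization of $a$ cannot be permuted freely, so one cannot simply excise from $\hat z_{i-1}$ the part lying over $d$; re-lifting the whole split of $y_{i-1}$ and $y_i$ from a single application of \ref{th:lift} to $a$, so that $z_{i-1}^\flat$ and $z_i^\flat$ share the common prefix $x$, is precisely the device that circumvents this --- the cost of the within-fiber rearrangement is absorbed by $\sc_\sd(H,\theta)$, while the change from $w$ to $w'$ is controlled by $\sd_p$ through \ref{d:mul}.
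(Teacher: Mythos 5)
Your proof is correct: part \ref{t-transfer:len} is exactly the routine argument the paper dismisses as ``straightforward,'' and part \ref{t-transfer:cat} is the standard chain-lifting argument. The paper does not prove \ref{t-transfer:cat} in-text but defers to \cite[Proposition 4.6(2)]{baeth-smertnig15}, whose proof runs along essentially the same lines as yours (connect the images by an $N$-chain for $\sd_p$ in $T$, lift each step so that the two lifted factorizations share a common part, and bridge within the permutable fibers using $\sc_\sd(H,\theta)$), so your write-up is a correct self-contained version of the cited argument, including the needed care that non-trivial steps have $\sd_p \ge 1$ so the bound from \ref{d:len} stays below $N$.
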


\begin{proof}
  \ref*{t-transfer:len} is straightforward.
  For \ref*{t-transfer:cat}, see \cite[Proposition 4.6(2)]{baeth-smertnig15}.
\end{proof}

For not too coarse distances $\sd$, it is usually easy to obtain $\sc_\sd(H) \ge \sc_p(T)$.
The strength of \ref*{t-transfer:cat} comes from the fact that, for interesting classes of monoids and rings, one is able to prove $\sc_\sd(H,\theta) \le N$ for some small constant $N$ (say, $N=2$).
Then the catenary degree of $H$ is equal to that of $T$, unless $\sc_\sd(H)$ is very small.

We briefly recall monoids of zero-sum sequences.
As codomains of transfer homomorphisms, these monoids play a pivotal role in the study of non-unique factorizations.
Let $(G,+)$ be an additive abelian group and let $G_0 \subset G$ be a subset.
In the tradition of combinatorial number theory, an element of the, multiplicatively written, free abelian monoid $(\famon{G_0},\cdot)$ is called a \emph{sequence}.
If $S=g_1\cdots g_k \in \famon{G_0}$ is a sequence, then $\sigma(S)= g_1 + \cdots + g_k \in G$ is its sum.
The map $\sigma\colon (\famon{G_0},\cdot) \to (G,+)$ is a monoid homomorphism.
The \emph{monoid of zero-sum sequences} over $G_0$,
\[
\cB(G_0) = \{\, S \in \famon{G_0} \mid \sigma(S) = 0 \,\},
\]
is a commutative Krull monoid.

\begin{remark}
  If $H$ is a commutative Krull monoid, $G$ is its divisor class group, and $G_0\subset G$ is the set of classes containing prime divisors, then there exists a transfer homomorphism $\theta\colon H \to \cB(G_0)$ with $\sc_\sd(H,\theta) \le 2$ for any distance $\sd$ on $H$.
  If $a \in H$ and $aH = \fp_1 \cdot_v \cdots \cdot_v \fp_k$ with divisorial prime ideals $\fp_1$, $\ldots\,$,~$\fp_k$, then $\theta(a)=[\fp_1]\cdots [\fp_k] \in \cB(G_0)$, where $[\fp_i] \in G_0$ is the class of $\fp_i$ and the product is the formal product in $\famon{G_0}$.
  See \cite[Theorem 3.2.8]{ghk06} or \cite[Theorem 1.3.4]{geroldinger09} for this; a generalization to normalizing Krull monoids appears in \cite[Theorem 6.5]{geroldinger13} and one to arithmetical maximal orders in \cite[Theorem 5.23]{smertnig13} and \cite[Section 7]{baeth-smertnig15}.
\end{remark}

Monoids of zero-sum sequences are studied using methods from combinatorial and additive number theory.
We refer to \cite{ghk06,geroldinger09,grynkiewicz13} as starting points in this direction.
By \cref{t-transfer}, results about their factorization theory carry over to monoids which possess a transfer homomorphism to monoids of zero-sum sequences.
Such monoids are called \emph{transfer Krull monoids} in \cite{geroldinger16}.

If $G_0$ is finite, the main results about $\cB(G_0)$ are the finiteness of several arithmetical invariants \cite[Theorem 4.6]{geroldinger16} and the \emph{structure theorem for sets of lengths} \cite[Theorem 5.3]{geroldinger16}, which implies that sets of lengths are almost arithmetical multiprogressions with differences $d \in \Delta(\cB(G_0))$.
Motivated by rings of algebraic integers, the main attention however has been on the case where $G=G_0$ is a finite abelian group.
In this case, $\Delta(\cB(G))$ is a finite interval starting at $1$ (if it is non-empty) and, under weak assumptions on $G$, it has been shown that $\Delta(\cB(G)) = [1,\sc_p(\cB(G))-2]$. Many arithmetical invariants of $\cB(G)$ can be expressed in terms of the Davenport constant of $G$.

\section{Class groups and modules of finite length}
\label{sec-class-groups}

In this section we introduce a class group $\cgrp{R}$, together with a distinguished subset $\cgrpm{R}$, in terms of modules of finite length.
In the next section, $\cgrp{R}$ will be used to construct a transfer homomorphism.
Here, we show that $\cgrp{R}$ is isomorphic to the ideal class group $\pcg{R}$ as defined in \cite{levy-robson11}.
The subset $\cgrpm{R}$ corresponds to a subset $\pcgm{R}$, and we show that $\pcgm{R}$ and $\pcg{R}$ are preserved under Morita equivalence and passage to a Dedekind right closure.

The modules of finite length, as a full subcategory of the category of $R$-modules, form an abelian category, denoted by $\modfl(R)$.
This category has an associated Grothendieck group $K_0\modfl(R)$.
The Jordan-Hölder Theorem implies that $K_0\modfl(R)$ is a free abelian group with basis the isomorphism classes of simple $R$-modules.
We use additive notation for $K_0\modfl(R)$ and write classes in $K_0\modfl(R)$ using round parentheses; thus, if $M$ is a module of finite length, we write
\[
(M) \in K_0\modfl(R).
\]
If $M$ has a composition series with composition factors $V_1$, $\ldots\,$,~$V_n$, then $(M) = (V_1) + \cdots + (V_n)$ in $K_0\modfl(R)$.

We write $\simple{R}$ for the set of isomorphism classes of simple $R$-modules.
For a set $\Omega$, we write $\famon{\Omega}$ for the free abelian monoid with basis $\Omega$, and $\fagrp{\Omega}$ for the free abelian group with basis $\Omega$.
With this notation $K_0\modfl(R) = \fagrp{\simple{R}}$.
For a tower $T$, we set $(T) = \smash\sum_{(V) \in T} (V) \in K_0\modfl(R)$.
We write $\towers{R}$ for the set of all towers (as usual, identifying cyclic permutations of a cycle tower).
Note that $\{\, (T) \mid T \in \towers{R} \,\}$ forms a $\bZ$-linearly independent subset of $\famon{\simple{R}}$.
Thus, the submonoid of $\famon{\simple{R}}$ generated by the towers is isomorphic to the free abelian monoid on towers.
In this way we can embed $\famon{\towers{R}}$ into $\famon{\simple{R}}$, and $\fagrp{\towers{R}}$ into $\fagrp{\simple{R}}=K_0\modfl(R)$.
We shall identify them by means of these embeddings.

\begin{lemma} \label{l-ex-phi0}
  There exists a homomorphism of abelian groups
  \[
  \Phi_0\colon K_0\modfl(R) \to K_0(R)
  \]
  such that whenever $M$ is a module of finite length and $M \cong P/Q$ with finitely generated projective modules $P$ and $Q$, then $\Phi_0((M)) = [Q] - [P]$.
\end{lemma}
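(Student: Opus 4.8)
The plan is to define $\Phi_0$ on the free basis of $K_0\modfl(R)$ — the isomorphism classes of simple modules — and then verify the asserted formula on arbitrary projective presentations by induction on composition length. Two standing facts about an HNP ring are used throughout: since $R$ is hereditary, every submodule of a finitely generated projective module is again projective; and since $R$ is Noetherian, every submodule of a finitely generated module is finitely generated. For each $(V) \in \simple{R}$, a simple module is cyclic, so we fix an epimorphism $R \twoheadrightarrow V$ with kernel $Q_V$; by the standing facts $Q_V$ is finitely generated projective, and $V \cong R/Q_V$. Since $K_0\modfl(R) = \fagrp{\simple{R}}$ is free abelian on $\simple{R}$, the assignment $(V) \mapsto [Q_V] - [R]$ extends uniquely to a group homomorphism $\Phi_0 \colon K_0\modfl(R) \to K_0(R)$.

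It remains to show that $\Phi_0((M)) = [Q] - [P]$ whenever $M$ has finite length and $M \cong P/Q$ with $P$, $Q$ finitely generated projective; we induct on $\length{M}$. If $\length{M} = 1$, then $M$ is simple, say $M \cong V$; comparing the two short exact sequences $0 \to Q_V \to R \to V \to 0$ and $0 \to Q \to P \to V \to 0$ via Schanuel's lemma yields $Q_V \oplus P \cong Q \oplus R$, hence $[Q_V] + [P] = [Q] + [R]$ in $K_0(R)$ (using additivity on direct sums and invariance under isomorphism), that is, $\Phi_0((M)) = [Q_V] - [R] = [Q] - [P]$.

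Now suppose $\length{M} = n > 1$. Choose a simple submodule $V \subset M$ and put $M' = M/V$, so that $(M) = (V) + (M')$ in $K_0\modfl(R)$ and thus $\Phi_0((M)) = \Phi_0((V)) + \Phi_0((M'))$. Let $\pi \colon P \to M$ denote the given epimorphism, with $\ker \pi = Q$, and set $P' = \pi^{-1}(V)$. Then $Q \subset P' \subset P$; the module $P'$ is finitely generated projective by the standing facts; and $\pi$ induces isomorphisms $P'/Q \cong V$ and $P/P' \cong M'$. Applying the base case to $V \cong P'/Q$ gives $\Phi_0((V)) = [Q] - [P']$, and applying the inductive hypothesis to $M' \cong P/P'$ (note $\length{M'} = n-1$) gives $\Phi_0((M')) = [P'] - [P]$; adding these, $\Phi_0((M)) = ([Q]-[P']) + ([P']-[P]) = [Q] - [P]$, which completes the induction.

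The one delicate point is the base case: Schanuel's lemma is precisely what guarantees that the value $[Q_V] - [R]$ attached to a simple module does not depend on the chosen cyclic presentation, so that the formula in the statement is even internally consistent. After that the inductive step is formal; the only thing to check is that the intermediate module $P'$ inherits finite generation and projectivity, which is exactly where the Noetherian and hereditary hypotheses on $R$ are invoked.
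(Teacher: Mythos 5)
Your proposal is correct. The ingredients are the same as in the paper's proof --- Schanuel's lemma, plus the fact that for an epimorphism $\pi\colon P\to M$ the preimage $P'=\pi^{-1}(V)$ of a submodule is again finitely generated projective because $R$ is Noetherian and hereditary --- but the architecture differs. The paper defines $\widetilde\Phi_0(M)=[Q]-[P]$ directly on every module of finite length, using Schanuel's lemma in full generality to see that this value is independent of the chosen presentation, checks additivity on arbitrary short exact sequences via the same lift $Q\subset P'\subset P$ that you use, and then invokes the universal property of the Grothendieck group of $\modfl(R)$. You instead exploit the description of $K_0\modfl(R)$ as the free abelian group on $\simple{R}$ (Jordan--H\"older), define $\Phi_0$ on that basis via chosen cyclic presentations $V\cong R/Q_V$, and recover the general formula by induction on composition length, with Schanuel needed only for simple modules; your inductive step is exactly the paper's additivity computation specialized to a simple submodule. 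The paper's route is shorter and needs neither induction nor the basis description; yours avoids an explicit appeal to the universal property at the cost of relying on the freeness of $K_0\modfl(R)$ (which the paper has already recorded), and it makes explicit the finite generation and projectivity of $P'$, which the paper's additivity step uses tacitly. The only case you leave implicit is $M=\mathbf 0$, where $Q\cong P$ and both sides vanish, which is harmless.
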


\begin{proof}
  Let $M$ be a module of finite length, and suppose that $M \cong P/Q \cong P'/Q'$ with $P$, $Q$, $P'$, and $Q'$ finitely generated projective modules.
  Then Schanuel's Lemma implies $P \oplus Q' \cong P' \oplus Q$, and hence $[Q]-[P]=[Q']-[P']$.
  Thus we can define a map $\widetilde\Phi_0\colon \modfl(R) \to K_0(R)$ such that $M \mapsto [Q] - [P]$.

  If $\mathbf 0 \to L \to M \to N \to \mathbf 0$ is a short exact sequence in $\modfl(R)$ and $M \cong P/Q$, then there exists $Q \subset P' \subset P$ such that $N \cong P/P'$ and $L \cong P' / Q$.
  Since $[Q]-[P] = ([Q]-[P']) + ([P'] - [P])$, the map $\widetilde\Phi_0$ is additive on short exact sequences.
  By the universal property of the Grothendieck group, there exists $\Phi_0\colon K_0\modfl(R) \to K_0(R)$ as claimed.
\end{proof}

Recall that $\Psi^+\colon K_0(R) \to \bZ^{\modspec{R}}$ is the extension of the genus $\Psi$ to $K_0(R)$.
\begin{prop} \label{p-f}
  \begin{enumerate}
    \item \label{p-f:ker} We have $\ker(\Psi^+ \circ \Phi_0) = \fagrp{\towers{R}}$.
    \item\label{p-f:gen} Let $Q \subset P$ be finitely generated projective modules.
      Then the following statements are equivalent:
      \begin{equivenumerate}
        \item\label{p-f:gen:rank} $\Psi(Q)=\Psi(P)$.
        \item\label{p-f:gen:tower} $P/Q \in \modfl(R)$ and $(P/Q) \in \famon{\towers{R}}$.
      \end{equivenumerate}
  \end{enumerate}
\end{prop}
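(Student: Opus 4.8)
The plan is to first determine the composite $\Psi^+ \circ \Phi_0$ explicitly on the basis of $K_0\modfl(R) = \fagrp{\simple{R}}$, then to prove \ref*{p-f:ker} as a purely combinatorial statement about the tower structure, and finally to derive \ref*{p-f:gen} from \ref*{p-f:ker}. For the first step, fix a simple module $V$ and a maximal right ideal $\mathfrak m \subsetneq R$ with $R/\mathfrak m \cong V$; since $R$ is hereditary and Noetherian, $\mathfrak m$ is finitely generated projective, so \cref{l-ex-phi0} gives $\Phi_0((V)) = [\mathfrak m] - [R]$, and hence, writing $\partial = \Psi^+ \circ \Phi_0$,
\[
\partial((V)) = \Psi(\mathfrak m) - \Psi(R) = e_{(V)^+} - e_{(V)}
\]
by \cref{cor-genus-of-simples} applied with $P = R$ and $M = \mathfrak m$, using the conventions $e_{(V)} = \mathbf 0$ for faithful $V$ and $e_{(V)^+} = \mathbf 0$ when $V$ has no unfaithful successor. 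In particular $\partial$ takes values in the subgroup of $\bZ^{\modspec(R)}$ spanned by the coordinates indexed by unfaithful simple modules.

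With this formula in hand, \ref*{p-f:ker} is essentially bookkeeping about the successor relation. For the inclusion $\fagrp{\towers{R}} \subseteq \ker\partial$ I would check that $\partial((T)) = \sum_{(W)\in T}\bigl(e_{(W)^+} - e_{(W)}\bigr)$ telescopes to $\mathbf 0$, handling the two shapes of towers separately: in a cycle tower the successor map is a cyclic permutation of the $n$ members, whereas in a faithful tower the top contributes only the summand $+e_{(W_1)^+}$ and the base only $-e_{(W_n)}$; the sum collapses in both cases. For the reverse inclusion, given $x = \sum_{(V)} a_{(V)}(V) \in \ker\partial$ I would read off, for each unfaithful simple module $W$, the $(W)$-coordinate of $\partial(x)$: it equals $a_{(V)} - a_{(W)}$, where $V$ is the predecessor of $W$, and this uses precisely that every unfaithful simple has a \emph{unique} predecessor. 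Vanishing of all these coordinates forces $a_{(V)}$ to be constant along each tower, so $x$ is an integral combination of the classes $(T)$, i.e.\ $x \in \fagrp{\towers{R}}$. The one point that really demands care is matching the two conventions — for faithful $V$, and for $V$ without an unfaithful successor — against the path-versus-cycle dichotomy of towers; once that is set up, the computation is mechanical.

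For \ref*{p-f:gen}, I would first observe that if $\udim Q < \udim P$ then \ref*{p-f:gen:rank} fails (the uniform-dimension coordinate of the genus distinguishes $Q$ from $P$) and $P/Q$ is not torsion, hence not of finite length, so that \ref*{p-f:gen:tower} is not even meaningful; thus one may assume $\udim Q = \udim P$, in which case $Q$ is essential in $P$ and the finitely generated torsion module $P/Q$ has finite length (being a quotient of a finite direct sum of modules $R/cR$ with $c \in R^\bullet$, each of finite length). Now $Q$ is finitely generated projective, so \cref{l-ex-phi0} gives $\partial((P/Q)) = \Psi(Q) - \Psi(P)$, whence \ref*{p-f:gen:rank} holds if and only if $(P/Q) \in \ker\partial = \fagrp{\towers{R}}$ by \ref*{p-f:ker}. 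Since the towers partition $\simple{R}$, an element of $\fagrp{\towers{R}}$ has all of its $\simple{R}$-coordinates nonnegative precisely when all of its tower-coefficients are nonnegative, i.e.\ $\fagrp{\towers{R}} \cap \famon{\simple{R}} = \famon{\towers{R}}$; as $(P/Q) \in \famon{\simple{R}}$ automatically, this upgrades the previous equivalence to \ref*{p-f:gen:rank} $\iff$ $(P/Q) \in \famon{\towers{R}}$, which is \ref*{p-f:gen:tower}. I do not anticipate a genuine obstacle: the one non-trivial input is \cref{cor-genus-of-simples}, which is already available, and everything else is bookkeeping — the convention-matching in \ref*{p-f:ker} and the finite-length observation in \ref*{p-f:gen} being the two places that require a moment's attention.
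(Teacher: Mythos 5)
Your proof is correct and takes essentially the same route as the paper's: compute $\Psi^+\circ\Phi_0$ on simple modules via \cref{cor-genus-of-simples}, identify the kernel as the elements whose coefficients are constant along each tower, and deduce \ref*{p-f:gen} from \ref*{p-f:ker} together with $\fagrp{\towers{R}}\cap\famon{\simple{R}}=\famon{\towers{R}}$. Your additional care with the case $\udim Q<\udim P$ and with the finite length of $P/Q$ only spells out details the paper leaves implicit.
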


\begin{proof}
  \ref*{p-f:ker}
  Let $(M) - (N) \in K_0\modfl(R)$ with $M$, $N$ modules of finite length.
  Since $K_0\modfl(R)$ is the free abelian group on isomorphism classes of simple modules, we have
  \[
  (M) - (N) = \sum_{T \in \towers{R}} \sum_{(V) \in T} n_{(V)} (V) \in K_0\modfl(R)
  \]
  with $n_{(V)} \in \bZ$ almost all zero.
  Thus, by Corollary~\ref{cor-genus-of-simples},
  \[
    \Psi^+ \circ \Phi_0((M)-(N)) = \sum_{T \in \towers{R}} \sum_{(V) \in T} n_{(V)} e_{(V)^+} - n_{(V)} e_{(V)},
  \]
  where we use the notational convention $e_{(V)}=\mathbf 0$ if $(V)$ is the top of a faithful tower, and $e_{(V)^+}=\mathbf 0$ if $(V)$ is the bottom of a faithful tower.
  Hence, $\Psi^+\circ \Phi_0((M)-(N)) = \mathbf 0$ if and only if $n_{(V)} = n_{(W)}$ whenever $(V)$ and $(W)$ are contained in the same tower.
  This is the case if and only if $(M) - (N) \in \fagrp{\towers{R}}$.

  \ref*{p-f:gen} Recall that $P/Q$ has finite length if and only if $\udim P = \udim Q$ by \cite[Corollary 12.17]{levy-robson11}.
  Then the claim follows from \ref*{p-f:ker} and $\fagrp{\towers{R}} \cap \famon{\simple{R}} = \famon{\towers{R}}$.
\end{proof}

\begin{defi}
A right $R$-ideal $I$ is in the \emph{principal genus} if $\Psi(I)=\Psi(R)$.
\end{defi}
By \cref{p-f}, a right $R$-ideal $I$ is in the principal genus if and only if $(R/I) \in \famon{\towers{R}}$.
If $I$ is in the principal genus, then $I$ is a progenerator.
The converse is not true.

A cyclic module $R/I$ with $I$ a right ideal of $R$ has finite length if and only if $I$ is a right $R$-ideal.
In particular, a cyclically presented module, that is, a module of the form $R/aR$ with $a\in R$, has finite length if and only if $a \in R^\bullet$.
We write $\cp{R}$ for the submonoid of $K_0\modfl(R)$ consisting of all $(R/aR)$ with $a \in R^\bullet$.
(This is indeed a submonoid, because $(R/aR) + (R/bR) = (R/aR) + (aR/abR) = (R/abR)$ for $a$,~$b \in R^\bullet$.)
We write $\qcp{R}$ for the quotient group of $\cp{R}$, which is the subgroup of $K_0\modfl(R)$ generated by $\cp{R}$.
Classes in the factor group $K_0\modfl(R)/\qcp{R}$ will be denoted using angle brackets, that is, for a module $M$ of finite length, we write
\[
\langle M \rangle \in K_0\modfl(R)/\qcp{R}.
\]
(We also use angle brackets to denote a subgroup generated by a set, as on the right hand side of the previous equation, but there should be no confusion.)
For a tower $T$, we abbreviate $\langle (T) \rangle$ as $\langle T \rangle$.

We make an easy observation about the kernel of $\Phi_0$.
\begin{lemma} \label{l-prinker}
  We have $\qcp{R} \subset \ker(\Phi_0) \subset \fagrp{\towers{R}}$.
\end{lemma}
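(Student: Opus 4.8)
The plan is to prove the two inclusions separately. For the second inclusion $\ker(\Phi_0) \subset \fagrp{\towers{R}}$, I would simply note that $\Psi^+ \circ \Phi_0$ annihilates $\ker(\Phi_0)$, so $\ker(\Phi_0) \subset \ker(\Psi^+ \circ \Phi_0)$, and the latter equals $\fagrp{\towers{R}}$ by \cref{p-f}\ref*{p-f:ker}. This step is essentially immediate.

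For the first inclusion $\qcp{R} \subset \ker(\Phi_0)$, it suffices to check that $\Phi_0((R/aR)) = \mathbf 0$ for every $a \in R^\bullet$, since $\qcp{R}$ is generated by such classes and $\Phi_0$ is a group homomorphism. Here I would use the defining property of $\Phi_0$ from \cref{l-ex-phi0}: whenever $M \cong P/Q$ with $P$, $Q$ finitely generated projective, then $\Phi_0((M)) = [Q] - [P]$. For $M = R/aR$ with $a \in R^\bullet$, the presentation $\mathbf 0 \to aR \to R \to R/aR \to \mathbf 0$ exhibits $R/aR \cong R/aR$ with $P = R$ and $Q = aR$. Since $a$ is a non-zero-divisor, left multiplication by $a$ gives an isomorphism $R \isomto aR$ of right $R$-modules, so $aR$ is free of rank one and $[aR] = [R]$ in $K_0(R)$. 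Hence $\Phi_0((R/aR)) = [aR] - [R] = \mathbf 0$, and therefore $\qcp{R} \subset \ker(\Phi_0)$.

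I do not anticipate any real obstacle here; the statement is labeled an "easy observation" and both inclusions follow quickly from results already established (\cref{l-ex-phi0} and \cref{p-f}). The only point requiring a moment's care is the identification $[aR] = [R]$, which rests on the fact that $a \in R^\bullet$ is a non-zero-divisor, so that the right-module map $R \to aR$, $r \mapsto ar$, is injective (hence bijective onto $aR$). This is exactly the input that distinguishes cyclically presented modules $R/aR$ with $a \in R^\bullet$ from arbitrary cyclic modules of finite length, and it is what makes $\qcp{R}$ — rather than all of $K_0\modfl(R)$ — land inside $\ker(\Phi_0)$.
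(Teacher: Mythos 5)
Your proposal is correct and follows the same route as the paper: $\Phi_0((R/aR)) = [aR]-[R] = \mathbf 0$ because $aR \cong R$ for $a \in R^\bullet$, giving $\qcp{R} \subset \ker(\Phi_0)$, and then $\ker(\Phi_0) \subset \ker(\Psi^+ \circ \Phi_0) = \fagrp{\towers{R}}$ by \subref{p-f:ker}. Your extra remark justifying $[aR]=[R]$ via the right-module isomorphism $r \mapsto ar$ is exactly the point the paper leaves implicit.
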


\begin{proof}
  If $a \in R^\bullet$, then $[aR]=[R]$ and hence $\Phi_0((R/aR))=\mathbf 0$.
  Thus $\qcp{R} \subset \ker(\Phi_0) \subset \ker(\Psi^+ \circ \Phi_0) = \fagrp{\towers{R}}$ by \subref{p-f:ker}.
\end{proof}
With this, we are able to define a class group in terms of modules of finite length.
\begin{defi} \label{d-cgrp}
  Let $\cgrp{R} = \fagrp{\towers{R}}/ \qcp{R}$.
\end{defi}
The group $K_0\modfl(R)/\qcp{R}$ may be viewed as a sort of class group in the sense `right $R$-ideals modulo principal right $R$-ideals', except that instead of a right $R$-ideal $I$ one considers (the composition series of) $R/I$.
The group $\cgrp{R}$ may be viewed as `right $R$-ideals in the principal genus modulo principal right $R$-ideals'.
For the construction of a transfer homomorphism in the setting of HNP rings, it turns out that $\cgrp{R}$ is the correct class group to use.

\begin{remark}
  If $R$ is a Dedekind prime ring, then every tower is trivial, every right $R$-ideal is in the principal genus, $K_0\modfl(R) = \fagrp{\simple{R}} = \fagrp{\towers{R}}$, and $K_0\modfl(R)/\qcp{R} = \cgrp{R}$.
  If $R$ is a commutative Dedekind domain, then $I=\ann_R(R/I)$, and it is easy to see that $K_0\modfl(R)/\qcp{R}=\cgrp{R}$ is isomorphic to the group of fractional $R$-ideals modulo the principal fractional $R$-ideals.
  (This is a special case of \cref{l-dedekind-classiso} below.)
\end{remark}

The main result in this section, \cref{t-k0-iso-pcg} below, implies $\cgrp{R} \cong \pcg{R}$.
For the proof of this, several intermediate steps are needed.
Let us first consider the image of $\Phi_0\colon K_0\modfl(R) \to K_0(R)$.
Observe that the uniform dimension gives a homomorphism of abelian groups $\udim\colon K_0(R) \to \bZ$.

\begin{lemma} \label{l-image}
  We have $\im \Phi_0 = \ker(\udim) \cong \pcg{R} \times \bigoplus_{T \in \towers{R}} \bZ^{\length{T}-1}$.
\end{lemma}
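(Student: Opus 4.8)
The plan is to compare $\im\Phi_0$ with $\ker(\udim)$ by means of the genus homomorphism $\Psi^+\colon K_0(R) \to \bZ^{\modspec(R)}$. Recall that $\pcg{R} = \ker(\Psi^+)$ and that the $\mathbf 0$-coordinate of $\Psi^+(x)$ equals $\udim(x)$, so that $\pcg{R} \subseteq \ker(\udim)$. I shall establish: (a) $\im\Phi_0 \subseteq \ker(\udim)$; (b) $\pcg{R} \subseteq \im\Phi_0$; and (c) $\Psi^+(\im\Phi_0) = \Psi^+(\ker(\udim))$, this common group being free abelian and isomorphic to $\bigoplus_{T \in \towers{R}} \bZ^{\length{T}-1}$. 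Granting (a)--(c): for $y \in \ker(\udim)$ there is, by (c), a $z \in \im\Phi_0$ with $\Psi^+(y) = \Psi^+(z)$, so $y - z \in \ker(\Psi^+) = \pcg{R} \subseteq \im\Phi_0$ by (b), hence $y \in \im\Phi_0$; thus $\im\Phi_0 = \ker(\udim)$. Moreover the short exact sequence
\[
0 \longrightarrow \pcg{R} \longrightarrow \ker(\udim) \xrightarrow{\ \Psi^+\ } \Psi^+(\ker(\udim)) \longrightarrow 0
\]
(exact because $\ker(\Psi^+) = \pcg{R} \subseteq \ker(\udim)$) splits, its right-hand term being free abelian by (c), which gives $\ker(\udim) \cong \pcg{R} \times \bigoplus_{T \in \towers{R}} \bZ^{\length{T}-1}$.

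To prove (a): if $M$ has finite length and $M \cong P/Q$ with $P$, $Q$ finitely generated projective, then $P$ embeds into a free module and hence has no nonzero submodule of finite length (such a submodule would be a finite-length submodule of some $R^n$, forcing $R$ to be Artinian). Therefore $Q$ is essential in $P$, so $\udim Q = \udim P$ and $\udim \Phi_0((M)) = \udim Q - \udim P = 0$. To prove (b): given $x \in \pcg{R} = \ker(\Psi^+)$, the class $x + [R] \in K_0(R)$ has genus $\Psi(R)$, so by the Main Structure Theorem \cref{t-main-structure} there is a finitely generated projective module $I$ with $\Psi(I) = \Psi(R)$ and $[I] = x + [R]$; since $\udim I = \udim R$, such an $I$ is isomorphic to a right $R$-ideal, so $R/I$ has finite length and $\Phi_0((R/I)) = [I] - [R] = x$.

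To prove (c): write $\mathcal{U}$ for the set of isomorphism classes of unfaithful simple modules, so $\modspec(R) = \mathcal{U} \cup \{\mathbf 0\}$, and let $D$ be the group of finitely supported vectors $v \in \bZ^{\mathcal{U}}$ satisfying $\sum_{(W) \in T} v_{(W)} = 0$ for every cycle tower $T$. First, any element of $\ker(\udim)$ has the form $[P] - [Q]$ with $\udim P = \udim Q$, and then almost standard rank and cycle standard rank show $\Psi(P) - \Psi(Q) \in D$; hence $\Psi^+(\ker(\udim)) \subseteq D$. Second, by \cref{cor-genus-of-simples} one has $\Psi^+ \circ \Phi_0((W)) = e_{(W)^+} - e_{(W)}$ for every simple $W$, with the conventions $e_{(W)} = \mathbf 0$ if $W$ is a faithful top and $e_{(W)^+} = \mathbf 0$ if $W$ is a faithful bottom. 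Running over the members of a faithful tower $T$, these vectors generate the full coordinate subgroup of $\bZ^{\mathcal{U}}$ on the unfaithful members of $T$ (telescoping sums recover $e_{(W)}$ for each unfaithful $W \in T$); running over a cycle tower $T$, they generate $\{\, v \in \bZ^{T} : \sum_{(W) \in T} v_{(W)} = 0 \,\}$. Hence $\im(\Psi^+ \circ \Phi_0) = D$. Combining with (a) gives $D = \im(\Psi^+ \circ \Phi_0) = \Psi^+(\im\Phi_0) \subseteq \Psi^+(\ker(\udim)) \subseteq D$, so all coincide. Finally $D$ is free abelian: it is the direct sum, over faithful towers $T$, of coordinate subgroups of rank $\length{T}-1$ (the number of unfaithful members of $T$), and over cycle towers $T$, of sum-zero sublattices of $\bZ^{T}$, again of rank $\length{T}-1$; thus $D \cong \bigoplus_{T \in \towers{R}} \bZ^{\length{T}-1}$.

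The step I expect to be the main obstacle is (c): identifying $\im(\Psi^+ \circ \Phi_0)$ with $D$ requires careful bookkeeping with the conventions for faithful tops and bottoms and with the almost standard and cycle standard rank constraints that cut out $D$ inside $\bZ^{\mathcal{U}}$. Part (b) is short but leans on the Main Structure Theorem and on the standard fact that a finitely generated projective module of uniform dimension $\udim R$ is isomorphic to a right $R$-ideal; part (a) is elementary.
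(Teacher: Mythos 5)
Your proposal is correct, but it is organized differently from the paper's proof, so a comparison is worthwhile. The paper gets the inclusion $\ker(\udim)\subset\im\Phi_0$ in two lines: any element of $\ker(\udim)$ is $[Q]-[P]$ with $\udim P=\udim Q$, one may assume $Q\subset P$ by \cite[Lemma 12.8]{levy-robson11}, and then $P/Q$ has finite length by \cite[Corollary 12.17]{levy-robson11}; you instead reach the same inclusion indirectly, from your steps (b) and (c) together with the subgroup structure of $\im\Phi_0$. For the displayed isomorphism, the paper defines an explicit homomorphism $\alpha$ on $\ker(\udim)$ recording $(\stzcls{Q}-\stzcls{P},\Psi(Q)-\Psi(P))$, whose injectivity rests on the uniqueness half of \cref{t-main-structure} (Steinitz class and genus determine the stable isomorphism class), and then computes its image; you avoid any injectivity argument by exhibiting $\ker(\udim)$ as an extension of the free abelian group $D\cong\bigoplus_{T\in\towers{R}}\bZ^{\length{T}-1}$ by $\pcg{R}$ and splitting it. The computational core is the same in both arguments: the telescoping generators $e_{(W)^+}-e_{(W)}$ coming from \cref{cor-genus-of-simples} applied to maximal right ideals, the almost standard and cycle standard rank constraints that cut out $D$, and independence of Steinitz class and genus to realize every class of $\pcg{R}$ by a right $R$-ideal in the principal genus. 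What each approach buys: the paper's embedding argument makes the set equality immediate and yields a concrete description of the isomorphism via the pair (Steinitz class, normalized genus), while your route uses only the existence/independence part of the Main Structure Theorem (not the uniqueness part), at the cost of a longer path to $\im\Phi_0=\ker(\udim)$ and a non-canonical splitting; your step (a) is also more careful than the paper's ``immediate'', spelling out why a finite-length quotient of projectives forces equal uniform dimension.
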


\begin{proof}
  The inclusion $\im \Phi_0 \subset \ker(\udim)$ is immediate from the definition of $\Phi_0$.
  Every element of $K_0(R)$ has the form $[Q]-[P]$ with $P$ and $Q$ finitely generated projective modules.
  Suppose now that $[Q] - [P] \in \ker(\udim)$.
  Then $\udim(P)=\udim(Q)$.
  Using \cite[Lemma 12.8]{levy-robson11}, we may without restriction assume $Q \subset P$.
  Then $P/Q$ has finite length by \cite[Corollary 12.17]{levy-robson11}, and $\Psi_0((P/Q)) = [Q]-[P]$.
  Thus we have shown $\im\Phi_0 = \ker(\udim)$.

  We now show the isomorphism on the right hand side.
  Let $\ftowers{R}$ denote the set of all faithful towers, and $\ctowers{R}$ the set of all cycle towers.
  By almost standard rank, and additivity of rank and Steinitz class, there exists a homomorphism
  \[
  K_0(R) \to \pcg{R} \times \bZ \times \bigoplus_{T \in \ctowers{R}} \tfrac{1}{\udim R} \bZ^{\length{T}} \times \bigoplus_{T \in \ftowers{R}} \tfrac{1}{\udim R} \bZ^{\length{T}-1}
  \]
  satisfying $[P] \mapsto \left(\stzcls{P}, \Psi(P) - \frac{\udim P}{\udim R} \Psi(R)\right)$.
  This homomorphism restricts to $\ker(\udim)$ to give a homomorphism
  \[
  \alpha\colon \ker(\udim) \to \pcg{R} \times \bZ \times \bigoplus_{T \in \ctowers{R}} \bZ^{\length{T}} \times \bigoplus_{T \in \ftowers{R}} \bZ^{\length{T}-1}
  \]
  with $\alpha([Q] - [P])=\big(\stzcls{Q} - \stzcls{P}, \Psi(Q) - \Psi(P) \big)$.
  Since Steinitz class and genus determine the stable isomorphism class of a finitely generated projective module, $\alpha$ is injective.
  For a cycle tower $T \in \ctowers{R}$, let $V_T \subset \bZ^{\length{T}}$ denote the subset of all $(v_1,\ldots,v_{\length{T}}) \in \bZ^{\length{T}}$ with $v_1 + \cdots + v_{\length{T}} = 0$ and note that $V_T \cong \bZ^{\length{T}-1}$.
  By cycle standard rank, the image of $\alpha$ is contained in
  \begin{equation} \label{eq-alpha-image}
  \pcg{R} \times \mathbf 0 \times \bigoplus_{T \in \ctowers{R}} V_T \times \bigoplus_{T \in \ftowers{R}} \bZ^{\length{T}-1}
  \end{equation}
  and we claim equality.

  It suffices to show that each of the summands in \cref{eq-alpha-image}, with the remaining components set to zero, is contained in the image of $\alpha$.
  For $g \in \pcg{R}$, note that since Steinitz class and rank are independent invariants of finitely generated projective modules, there exists a finitely generated projective module $P$ with $\Psi(P)=\Psi(R)$ and $\stzcls{P}=g + \stzcls{R}$.
  Hence $\alpha([P]-[R]) = (g,\mathbf 0)$.

  Let $T$ be a non-trivial cycle tower, represented by pairwise non-isomorphic simple modules $W_1$, $\ldots\,$,~$W_n$ with $W_i$ an unfaithful successor of $W_{i-1}$ for all $i \in [2,n]$.
  If $M_i$ is a maximal right $R$-ideal with $R/M_i \cong W_i$, then \cref{cor-genus-of-simples} implies $\alpha([M_i]-[R]) = (g, e_{(W_{i+1})} - e_{(W_i)})$ for some $g \in \pcg{R}$.
  Since $(g,\mathbf 0) \in \im\alpha$, also $(0,e_{(W_{i+1})} - e_{(W_i)})$ is in the image of $\alpha$.
  It is easy to see that the vectors $(0,-e_{(W_1)} + e_{(W_2)})$, $\ldots\,$, $(0,-e_{(W_{n-1})}+e_{(W_{n})})$ span the $V_T$-summand.

  If $T$ is a non-trivial faithful tower, represented by pairwise non-isomorphic simple modules $W_1$, $\ldots\,$,~$W_n$ with $W_i$ a successor of $W_{i-1}$ for all $i \in [2,n]$, we see, again using \cref{cor-genus-of-simples}, that $(0,e_{(W_2)})$, $(0, e_{(W_{i+1})}-e_{(W_i)})$ for $i \in [2,n-1]$, and $(0,-e_{(W_n)})$ are contained in the image of $\alpha$.
  These vectors span the $\bZ^{\card{T}-1}$-factor corresponding to $T$.
\end{proof}

\begin{remark}
  Let $\mathbf 0 \ne P$ be a finitely generated projective module and let $n=\udim(P)$.
  Let $K_0^{(n)}(R)$ denote the subgroup generated by all stable isomorphism classes of finitely generated projective modules of uniform dimension $n$.
  Then $\ker(\udim) \cong K_0^{(n)}(R) / \langle [P] \rangle$.
  In particular, if $R$ is a domain, then $\ker(\udim) \cong K_0(R)/\langle [R] \rangle$.
\end{remark}

We now study some properties of $\langle I /J \rangle$ when $J \subset I$ are fractional right $R$-ideals.
Several basic properties follow immediately by considering composition series.
For instance, if $K \subset J \subset I$ are fractional right $R$-ideals, then $\langle I / K \rangle = \langle I / J \rangle + \langle J / K \rangle$ because $(I/K) = (I/J) + (J/K)$.
Clearly, if $J \subset I$ are fractional right $R$-ideals, and $\alpha\colon I \to \alpha(I)$ is an isomorphism, then $\alpha(I)/\alpha(J) \cong I/J$.
In particular, $\langle R / I \rangle = \langle xR / xI \rangle$ for all $x \in \quo(R)^\times$.
We show that something stronger holds.

\begin{lemma} \label{l-symboliso}
  Let $I$, $I'$, $J$, and $J'$ be fractional right $R$-ideals with $J \subset I$ and $J' \subset I'$.
  If $I \cong I'$ and $J \cong J'$, then $\langle I / J \rangle = \langle I' / J' \rangle$.
\end{lemma}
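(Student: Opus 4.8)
The plan is to reduce the statement to the defining generators $(R/aR)$ of $\qcp{R}$ by exploiting that any isomorphism of fractional right $R$-ideals is left multiplication by a unit of $\quo(R)$. Indeed, a fractional right $R$-ideal is an essential submodule of $\quo(R)$ viewed as a right $R$-module, and $\quo(R)$ is injective as a right $R$-module, so every $R$-homomorphism between fractional right $R$-ideals extends to an $R$-endomorphism $\phi$ of $\quo(R)$; from $\phi(s^{-1}r) = \phi(s^{-1})r$ and $\phi(s^{-1})s = \phi(1)$ for all $s \in R^\bullet$, $r \in R$, it follows that $\phi$ is left multiplication by $\phi(1)$, and an isomorphism is left multiplication by an element of $\quo(R)^\times$. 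Hence I may write $I' = qI$ and $J' = q'J$ with $q$,~$q' \in \quo(R)^\times$, and the task becomes to compare $\langle I/J\rangle$ with $\langle qI/q'J\rangle$.

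For the bookkeeping I would introduce, for any two fractional right $R$-ideals $A$ and $B$, the element $\delta(A,B) = (A/(A\cap B)) - (B/(A\cap B)) \in K_0\modfl(R)$ (note that $A\cap B$ is again a fractional right $R$-ideal and both quotients have finite length). The routine properties are: $\delta(A,B) = -\delta(B,A)$; additivity $\delta(A,B) + \delta(B,C) = \delta(A,C)$, obtained by passing to a common fractional right $R$-ideal $M \subset A\cap B\cap C$, so that $\delta(A,B) = (A/M) - (B/M)$, and likewise for the others; left-translation invariance $\delta(cA,cB) = \delta(A,B)$ for $c \in \quo(R)^\times$, which is just the isomorphism $cA/cB \cong A/B$ already observed above; and $\delta(A,B) = (A/B)$ when $B \subset A$. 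Granting the key point that $\delta(qL,L) \in \qcp{R}$ for every fractional right $R$-ideal $L$ and every $q \in \quo(R)^\times$, the lemma follows from
\begin{align*}
  (I'/J') - (I/J) &= \delta(qI, q'J) - \delta(I,J) \\
  &= \bigl(\delta(qI,qJ) - \delta(I,J)\bigr) + \delta(qJ,J) - \delta(q'J,J) = \delta(qJ,J) - \delta(q'J,J),
\end{align*}
whose right-hand side lies in $\qcp{R}$.

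It remains to prove $\delta(qL,L) \in \qcp{R}$. First take $L = R$: writing $q = s^{-1}t$ with $s$,~$t \in R^\bullet$ (possible since $\quo(R)$ is the quotient ring of $R$), we have $s(qR) = tR$, and hence
\[
  \delta(qR,R) = \delta(tR,sR) = \delta(tR,R) + \delta(R,sR) = (R/sR) - (R/tR) \in \qcp{R}.
\]
For general $L$, choose $x \in L \cap R^\bullet$, so that $xR \subset L$; then, using additivity together with left-translation invariance by $q^{-1}$ and by $x^{-1}$,
\begin{align*}
  \delta(qL,L) &= \delta(qL,qxR) + \delta(xR,L) + \delta(qxR,xR) \\
  &= \bigl(\delta(L,xR) + \delta(xR,L)\bigr) + \delta(x^{-1}qxR,R) = \delta(x^{-1}qxR,R),
\end{align*}
which lies in $\qcp{R}$ by the previous case.

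The hardest part will be the opening reduction, and it is the only step that is not formal manipulation in $K_0\modfl(R)$: the identification of isomorphisms of fractional right $R$-ideals with left multiplications by units of $\quo(R)$, equivalently the injectivity of $\quo(R)$ as a right $R$-module (here one may simply invoke the standard fact that, $R$ being prime Noetherian, $\quo(R)$ is the injective hull of $R_R$). After that point, the single genuine input is that every element of $\quo(R)$ is of the form $s^{-1}t$ with $s$,~$t \in R^\bullet$, which is precisely what collapses the whole statement onto the generators $(R/aR)$ of $\qcp{R}$.
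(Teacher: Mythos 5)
Your proof is correct and follows essentially the same route as the paper: reduce isomorphisms of fractional right $R$-ideals to left multiplication by units of $\quo(R)$, write such a unit as $s^{-1}t$ with $s$,~$t \in R^\bullet$ so that everything collapses onto the generators $(R/sR)$, $(R/tR)$ of $\qcp{R}$, and do the bookkeeping by additivity along chains of fractional right ideals. Your auxiliary $\delta(A,B)$ is just the extended symbol $\langle I/J\rangle$ (lifted to $K_0\modfl(R)$) that the paper introduces immediately after this lemma, so the organization differs only cosmetically from the paper's two-step argument (first the case $I=R$, then embedding $I \cup xI \subset s^{-1}R$).
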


\begin{proof}
  If $\alpha\colon I' \to I$ is an isomorphism, then $J' \cong \alpha(J')$ and thus  $I'/J' \cong \alpha(I')/\alpha(J')$.
  So we may assume $I=I'$.
  Since $J' \cong J$, there exists $x \in \quo(R)^\times$ such that $J' = xJ$.

  We first consider the case where $I=R$.
  Let $a$,~$b \in R^\bullet$ with $x = b^{-1}a$.
  Now $\langle R / J' \rangle = \langle R / b^{-1}a J \rangle = \langle bR / aJ \rangle$.
  Since $\langle R / bR \rangle = \mathbf 0$, we have $\langle bR / aJ \rangle = \langle R / bR \rangle + \langle bR /aJ \rangle = \langle R/aJ \rangle$.
  Next, since $J \subset R$ and $a \in R^\bullet$, we have $aJ \subset aR \subset R$.
  Hence $\langle R /aJ \rangle = \langle R/aR \rangle + \langle aR/aJ \rangle = \langle aR/aJ \rangle$.
  Finally, $\langle aR/aJ \rangle = \langle R / J \rangle$, and so, altogether $\langle R /J' \rangle = \langle R / J \rangle$.

  Now let $I$ be an arbitrary fractional right $R$-ideal.
  Since $R$ satisfies the left Ore condition with respect to $R^\bullet$, there exists $s \in R^\bullet$ such that $I \cup xI \subset s^{-1}R$.
  But then $\langle s^{-1}R/I \rangle + \langle I / xJ \rangle = \langle s^{-1}R/xI \rangle + \langle xI/xJ \rangle$.
  From the already established case, it follows that $\langle s^{-1}R/I \rangle = \langle R/sI \rangle= \langle R/sxI \rangle= \langle s^{-1}R/xI \rangle$.
  Therefore $\langle I / xJ \rangle = \langle xI / xJ \rangle = \langle I /J \rangle$, as claimed.
\end{proof}

The previous lemma shows that there is no harm in symbolically extending the definition of $\langle I / J \rangle$ to arbitrary fractional right $R$-ideals $I$ and $J$ (without requiring $J \subset I$) in the following way.
Recall that, for $I$ and $J$ fractional right $R$-ideals, there always exists some $x \in \quo(R)^\times$ such that $xJ \subset I$.

\begin{defi}
  If $I$ and $J$ are fractional right $R$-ideals, and $x \in \quo(R)^\times$ is such that $xJ \subset I$, we define
  \[
  \langle I / J \rangle = \langle I / xJ \rangle \quad\in K_0\modfl(R)/\qcp{R}.
  \]
\end{defi}

Obviously, the conclusion of \cref{l-symboliso} still holds with the extended definition.
We also need the following properties.

\begin{lemma}\label{l-symprop}
  Let $I$, $J$, and $K$ be fractional right $R$-ideals.
  \begin{enumerate}
    \item\label{l-symprop:sub} $\langle I / J \rangle + \langle J / K \rangle = \langle I / K \rangle$.
    \item\label{l-symprop:inv} $\langle I / J \rangle = - \langle J/I \rangle$.
  \end{enumerate}
\end{lemma}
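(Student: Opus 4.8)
The plan is to reduce both parts to the additivity of $\langle\,\cdot\,\rangle$ along a descending chain of fractional right $R$-ideals that was already noted before the statement, namely $\langle I/K\rangle = \langle I/J\rangle + \langle J/K\rangle$ whenever $K \subset J \subset I$ (this holds because $(I/K) = (I/J) + (J/K)$ in $K_0\modfl(R)$), together with \cref{l-symboliso} in its symbolically extended form.

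For \ref*{l-symprop:sub} I would first invoke the fact, recalled just before the extended definition, that for any two fractional right $R$-ideals there is a scalar in $\quo(R)^\times$ carrying one inside the other: choose $x \in \quo(R)^\times$ with $xJ \subset I$ and $y \in \quo(R)^\times$ with $yK \subset J$. Then automatically $xyK \subset xJ \subset I$, so the three modules form a genuine chain. Multiplication by $x$ is an isomorphism $J \to xJ$ carrying $yK$ onto $xyK$, hence $\langle J/K\rangle = \langle J/yK\rangle = \langle xJ/xyK\rangle$ by \cref{l-symboliso}. Chain additivity then gives $\langle I/xyK\rangle = \langle I/xJ\rangle + \langle xJ/xyK\rangle = \langle I/J\rangle + \langle J/K\rangle$, while the definition of the extended symbol gives $\langle I/K\rangle = \langle I/xyK\rangle$ since $xyK \subset I$. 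Comparing these two expressions yields the claim.

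Part \ref*{l-symprop:inv} is then immediate: taking $K = I$ in \ref*{l-symprop:sub} gives $\langle I/J\rangle + \langle J/I\rangle = \langle I/I\rangle$, and $\langle I/I\rangle = \langle \mathbf 0 \rangle = \mathbf 0$ because $I/I$ is the zero module, so $\langle I/J\rangle = -\langle J/I\rangle$.

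I do not anticipate any real obstacle: the argument is essentially bookkeeping. The one point to be careful about is the choice of scalars — arranging that $x$ and $y$ produce an honest chain $xyK \subset xJ \subset I$ rather than three ideals in general position — and the (routine) fact, already observed right after the extended definition, that \cref{l-symboliso} continues to hold for the symbolically extended $\langle I/J\rangle$, which is what legitimizes replacing $\langle J/K\rangle$ by $\langle xJ/xyK\rangle$.
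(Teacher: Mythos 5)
Your proposal is correct and coincides with the paper's own proof: the same choice of scalars $x,y \in \quo(R)^\times$ with $xJ \subset I$ and $yK \subset J$, the same chain $xyK \subset xJ \subset I$ combined with chain additivity, the extended definition, and the isomorphism invariance from \cref{l-symboliso}, and part \labelcref{l-symprop:inv} obtained by setting $K=I$ and using $\langle I/I\rangle = \mathbf 0$.
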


\begin{proof}
  \ref*{l-symprop:sub} Let $x$,~$y \in \quo(R)^\times$ be such that $xJ \subset I$ and $yK \subset J$.
  Then $\langle I / K \rangle = \langle I / xyK \rangle = \langle I / xJ \rangle + \langle xJ / xyK \rangle = \langle I/J \rangle + \langle J/K\rangle$.

  \ref*{l-symprop:inv} Apply \ref*{l-symprop:sub} with $K=I$ and observe $\langle I/I\rangle = \mathbf 0$.
\end{proof}

\begin{prop} \label{p-stable-iso-to-equi}
  If $I$ and $J$ are fractional right $R$-ideals such that $R \oplus I \cong R \oplus J$, then $\langle R/I \rangle = \langle R/J \rangle$.
\end{prop}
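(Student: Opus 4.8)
The plan is to use \cref{l-symboliso,l-symprop} to turn the assertion into the statement that a single finite‑length module has trivial class in $K_0\modfl(R)/\qcp R$, then to realize that module as the cokernel of an injective endomorphism of a free module, and finally to appeal to the structure theory of \cite{levy-robson11}.

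\emph{Reduction.} By \cref{l-symboliso} and the symbolic extension of $\langle\,\cdot\,/\,\cdot\,\rangle$, replacing $I$ and $J$ by isomorphic fractional right $R$-ideals changes neither $\langle R/I\rangle$ nor $\langle R/J\rangle$; since $R\oplus xJ\cong R\oplus J$ for any $x\in\quo(R)^\times$, we may assume $J\subseteq I\subseteq R$, so that $I/J$ has finite length. By \subref{l-symprop:sub} and \subref{l-symprop:inv} we have $\langle R/I\rangle-\langle R/J\rangle=\langle J/I\rangle=-\langle I/J\rangle$, so it suffices to show $(I/J)\in\qcp R$ in $K_0\modfl(R)$. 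Using that genus and Steinitz class are additive on direct sums (\cref{t-main-structure}) together with \subref{p-f:gen}, the hypothesis also forces $\Psi(I)=\Psi(J)$, hence $(I/J)\in\famon{\towers R}$, and $[I]=[J]$ in $K_0(R)$, hence $\Phi_0((I/J))=\mathbf 0$; thus the class to be killed already lies in $\fagrp{\towers R}\cap\ker\Phi_0$.

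\emph{Realizing $I/J$ as a cokernel.} Fix an isomorphism $\phi\colon R\oplus I\to R\oplus J$. Composing it with the inclusion $R\oplus J\hookrightarrow R\oplus I$ (legitimate since $J\subseteq I$) produces an injective $R$-endomorphism $\psi$ of $P:=R\oplus I$ with $\operatorname{coker}\psi=(R\oplus I)/(R\oplus J)\cong I/J$. Choosing $Q$ with $P\oplus Q\cong R^n$ and replacing $\psi$ by $\psi\oplus\id_Q$ does not alter the cokernel, so we may take $P=R^n$; then $\psi$ is left multiplication by a non‑zero‑divisor matrix $A\in M_n(R)$ and $R^n/AR^n\cong I/J$. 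Everything is thereby reduced to the assertion that for every non‑zero‑divisor $A\in M_n(R)$ one has $(R^n/AR^n)\in\qcp R$. I would deduce this from a chain of \emph{free} submodules $AR^n=L_0\subsetneq L_1\subsetneq\cdots\subsetneq L_m=R^n$ whose successive quotients $L_j/L_{j-1}$ are cyclically presented: splitting off from $R^n$ a cyclically presented quotient $R^n/L_{m-1}\cong R/a_mR$ keeps the kernel stably free — its class in $K_0(R)$ equals $[R^n]$ because $\Phi_0$ vanishes on cyclically presented modules — hence free, since its uniform dimension is at least $2$; iterating strictly decreases the length of the cokernel, and $(R^n/AR^n)=\sum_j(R/a_jR)\in\qcp R$.

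\emph{Main obstacle.} The delicate step is the production of this chain, i.e.\ an elementary‑divisor (Smith normal form) type reduction for non‑zero‑divisor matrices over the HNP ring $R$, the upshot being that $(R^n/AR^n)$ coincides in $K_0\modfl(R)$ with $(R/\theta R)$ for a suitable $\theta\in R^\bullet$, exactly as over a Dedekind domain. This is precisely where the fine structure theory of finitely generated (projective and finite‑length) modules over HNP rings from \cite{levy-robson11} is indispensable, including the case in which $I/J$ is already simple; a wholly self‑contained argument seems out of reach.
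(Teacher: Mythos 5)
Your reduction is correct bookkeeping: one may indeed assume $J\subseteq I\subseteq R$, and the statement is equivalent to $(I/J)\in\qcp{R}$, which you recast as ``$(R^n/AR^n)\in\qcp{R}$ for every regular matrix $A\in M_n(R)$''. But this is a reformulation, not a reduction in difficulty, and the step that carries the entire weight — the existence of a chain of \emph{free} submodules $AR^n=L_0\subsetneq\cdots\subsetneq L_m=R^n$ with cyclically presented successive quotients — is exactly what you do not prove; it does not follow from a citation of the Levy--Robson structure theory (genus and Steinitz class determine stable isomorphism classes, but they do not produce a Smith-type filtration). Note moreover what that chain would actually assert: it yields $(R^n/AR^n)\in\cp{R}$, which is strictly stronger than membership in $\qcp{R}$. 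Already when the cokernel is simple the step fails to be available: if $R$ is a domain and $I$ is a stably free maximal right $R$-ideal, then $V=R/I\cong R^2/(I\oplus R)$ with $I\oplus R\cong R^2$ free, and your induction would force $V\cong R/aR$ for some $a\in R^\bullet$, i.e.\ $V$ cyclically presented; since the proposition carries \emph{no} hypothesis that stably free right $R$-ideals are free, there is no reason such a free presenting maximal right ideal exists, and the chain has nowhere to go (the only proper intermediate module is $I\oplus R$ itself). The paper obtains the analogous saturation statement only under the extra hypothesis that stably free right ideals are free (\cref{p-krull}), precisely the hypothesis this proposition is designed to avoid (see the remark after \cref{t-k0-iso-pcg}). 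There is also a circularity risk in ``appealing to the structure theory'' at the end: the assertion that every effective class in $\ker\Phi_0$ lies in $\qcp{R}$ is essentially the injectivity of $\Phi$ in \cref{t-k0-iso-pcg}, which the paper deduces \emph{from} this proposition.

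For comparison, the paper's proof avoids matrices entirely and is short: if $\udim R\ge 2$, cancellation gives $I\cong J$ and \cref{l-symboliso} finishes; if $\udim R=1$, one composes an isomorphism $\alpha\colon J\oplus K\to R\oplus I$ with the projection onto $R$, uses hereditariness and uniform dimension one to see that $J$ maps isomorphically onto its image $\beta(J)\subset R$, and then compares kernels of the induced epimorphism $K\to R/\beta(J)$ to get $\langle K/I\rangle=\langle R/J\rangle$. So the intended argument is elementary given \cref{l-symboliso,l-symprop}, whereas your route replaces the statement by a harder (and, in this generality, unsubstantiated) elementary-divisor claim; as written, the proposal is not a proof.
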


\begin{proof}
  If $\udim R \ge 2$, then also $\udim I=\udim J =\udim R \ge 2$.
  It follows that $I \cong J$ (cf. \cite[Corollary 35.6]{levy-robson11}), and then \cref{l-symboliso} implies the claim.
  We may therefore assume $\udim R=1$, and hence that $R$ is a domain.
  We may without restriction assume $I$, $J \subset R$.

  We show more generally: If $I$, $J$,~$K$ are right $R$-ideals such that $J \oplus K \cong R \oplus I$, then $\langle K/I \rangle = \langle R / J \rangle$; this allows us to treat two cases that would otherwise occur as one.
  Let $\alpha\colon J \oplus K \to R \oplus I$ be an isomorphism, and denote by $p\colon R \oplus I \to R$ the projection onto $R$ along $I$. Write $\beta = p \circ \alpha$ for the composition, and note that $\ker(\beta) \cong I$.

  We view $J$ and $K$ as submodules of $J \oplus K$ in the canonical way.
  Since $\beta(J \oplus K) = R$, at least one of $\beta(J)$ and $\beta(K)$ must be nonzero.
  Assume first that $\beta(J) \ne \mathbf 0$.
  Then $\beta(J) \subset R$ is a right $R$-ideal because $R$ is a domain.
  Since $R$ is hereditary, $\beta(J)$ is projective, and we have $J \cong \beta(J) \oplus \ker(\beta|_J)$.
  But since $\udim J=1$ and $\beta(J) \ne \mathbf 0$, it follows that $\ker(\beta|_J) = \mathbf 0$ and hence $J \cong \beta(J)$ via $\beta$.
  Now consider the induced epimorphism
  \[
  \overline\beta\colon K \cong J \oplus K/J \to R / \beta(J).
  \]
  Since $\ker(\beta) \cap J = \mathbf 0$, it follows that $\ker(\overline \beta) = (\ker(\beta) + J)/ J \cong \ker(\beta) \cong I$.
  Then $K / \ker(\overline \beta) \cong R /\beta(J)$ implies $\langle K / I \rangle = \langle R / J \rangle$.

  If $\beta(J)=\mathbf 0$, then $\beta(K) \ne \mathbf 0$.
  We can reduce to the previous case by swapping the roles of $K$ and $J$, and obtain $\langle J / I \rangle = \langle R / K \rangle$.
  But then $\langle I/J \rangle = \langle K / R \rangle$.
  Adding $\langle R / I \rangle$ to both sides, we again find $\langle R /J \rangle = \langle K / I \rangle$.
\end{proof}

\begin{lemma} \label{l-k0-cycgen}
  \begin{enumerate}
    \item\label{l-k0-cycgen:constr} Let $n \in \bN_0$ and let $T_1$, $\ldots\,$,~$T_n$ be towers.
      If $I$ is a right $R$-ideal in the principal genus, then there exists a right $R$-ideal $J$ with $J \subset I$ such that $(I/J) = (T_1) + \cdots + (T_n)$.
    \item\label{l-k0-cycgen:towers} $\fagrp{\towers{R}} = \{\, (R/I) - (R/J) \mid \text{$I$, $J$ right $R$-ideals in the principal genus}\,\}$.
    \item\label{l-k0-cycgen:cgrp} $\cgrp{R} = \{\, \langle R/I \rangle \mid \text{$I$ a right $R$-ideal in the principal genus}\,\}.$
    \end{enumerate}
\end{lemma}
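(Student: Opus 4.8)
The plan is to establish part~\ref{l-k0-cycgen:constr} first, by induction on $n$, and then deduce parts \ref{l-k0-cycgen:towers} and \ref{l-k0-cycgen:cgrp} from it by elementary computations in $K_0\modfl(R)$.

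For part~\ref{l-k0-cycgen:constr}, the base case $n = 0$ is settled by taking $J = I$. For the inductive step I would peel off the towers one at a time, starting from the top. If $I$ is a right $R$-ideal in the principal genus, then $I$ is a progenerator, so the hypothesis of \cref{l-ex-uniserial} is met for $P = I$ and the top $V$ of $T_1$ (for the unfaithful top of a cycle tower this also follows directly from $\rho(I,V) = \rho(R,V) \ge 1$). That lemma furnishes a submodule $I_1 \subsetneq I$ with $I/I_1$ uniserial having composition factors exactly the modules of $T_1$, so $(I/I_1) = (T_1)$ in $K_0\modfl(R)$. Since $I_1 \subset R$ is finitely generated projective and $(I/I_1) = (T_1) \in \famon{\towers R}$, \subref{p-f:gen} gives $\Psi(I_1) = \Psi(I) = \Psi(R)$; in particular $\udim I_1 = \udim R$, so $I_1$ is again a right $R$-ideal in the principal genus. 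Applying the induction hypothesis to $I_1$ and $T_2, \ldots, T_n$ produces $J \subset I_1$ with $(I_1/J) = (T_2) + \cdots + (T_n)$, and additivity of the class on $\mathbf 0 \to I_1/J \to I/J \to I/I_1 \to \mathbf 0$ yields $(I/J) = (T_1) + \cdots + (T_n)$.

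For part~\ref{l-k0-cycgen:towers}, ``$\supseteq$'' is immediate from \subref{p-f:gen}, since a right $R$-ideal $I$ in the principal genus has $(R/I) \in \famon{\towers R}$. For ``$\subseteq$'', write $x \in \fagrp{\towers R}$ as $x = A - B$ with $A, B \in \famon{\towers R}$, expand $A$ and $B$ as finite lists of towers, and apply part~\ref{l-k0-cycgen:constr} to the principal-genus ideal $R$ to obtain right $R$-ideals $I, J \subset R$ with $(R/I) = A$ and $(R/J) = B$; these lie in the principal genus by \subref{p-f:gen}, so $x = (R/I) - (R/J)$. For part~\ref{l-k0-cycgen:cgrp}, ``$\supseteq$'' is again immediate. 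For ``$\subseteq$'', let $g \in \cgrp R = \fagrp{\towers R}/\qcp R$; by part~\ref{l-k0-cycgen:towers} we may write $g = \langle R/I \rangle - \langle R/J \rangle$ with $I, J$ right $R$-ideals in the principal genus. The delicate point is to \emph{invert} $\langle R/J \rangle$: choosing $b \in J^\bullet$ we have $bR \subset J \subset R$, and from $\mathbf 0 \to J/bR \to R/bR \to R/J \to \mathbf 0$ the class $(J/bR) = (R/bR) - (R/J)$ lies in $\famon{\simple R} \cap \fagrp{\towers R} = \famon{\towers R}$ (using that $bR \cong R$ and $J$ are both in the principal genus, so $(R/bR), (R/J) \in \famon{\towers R}$). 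Then part~\ref{l-k0-cycgen:constr} supplies a right $R$-ideal $K$ in the principal genus with $(R/K) = (J/bR)$, and since $(R/bR) \in \cp R$ has image $\mathbf 0$ in $K_0\modfl(R)/\qcp R$, we get $\langle R/K \rangle = \langle J/bR \rangle = -\langle R/J \rangle$. Finally, applying part~\ref{l-k0-cycgen:constr} to the concatenation of the tower lists of $R/I$ and $R/K$ yields a right $R$-ideal $L$ in the principal genus with $(R/L) = (R/I) + (R/K)$, whence $g = \langle R/I \rangle + \langle R/K \rangle = \langle R/L \rangle$.

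The real content, and the main obstacle, sits in part~\ref{l-k0-cycgen:constr}: the descending chain of ideals must be manufactured via \cref{l-ex-uniserial}, and the mechanism that keeps the induction running is \subref{p-f:gen}, which forces each intermediate ideal to stay in the principal genus --- hence remain a progenerator --- so that the lemma applies again. Once part~\ref{l-k0-cycgen:constr} is available, parts \ref{l-k0-cycgen:towers} and \ref{l-k0-cycgen:cgrp} are essentially formal; the only extra ingredient is the inversion identity $\langle R/J \rangle = -\langle J/bR \rangle$ in part~\ref{l-k0-cycgen:cgrp}, which reduces an arbitrary element of $\cgrp R$ to a single symbol $\langle R/L \rangle$.
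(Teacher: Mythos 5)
Your proof is correct and follows essentially the same route as the paper: part (1) by induction, using \cref{l-ex-uniserial} together with \subref{p-f:gen} to keep each intermediate ideal in the principal genus (the paper peels off $T_n$ at the bottom of the chain rather than $T_1$ at the top), and parts (2), (3) as formal consequences. In (3) the paper uses the same inversion trick $-\langle R/J\rangle = \langle J/aR\rangle$ with $a \in J^\bullet$, merely realizing the sum in one application of (1) inside $I$ instead of your two applications inside $R$ --- an inessential difference.
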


\begin{proof}
  \ref*{l-k0-cycgen:constr}
  We proceed by induction on $n$.
  If $n=0$, let $J=I$.
  Suppose now $n \ge 1$ and that the claim holds for $n-1$.
  By induction hypothesis, there exists a right $R$-ideal $J'$ such that $J' \subset I$ and $(I/J') = (T_1) + \dots + (T_{n-1})$.
  Since $(I/J') \in \famon{\towers{R}}$, we have $\Psi(J')=\Psi(I)=\Psi(R)$ by \subref{p-f:gen}.
  Hence $\rho(J',X) = \rho(R,X) > 0$ for all unfaithful simple modules $X$.
  By \cref{l-ex-uniserial}, there exists $J \subset J'$ such that $(J'/J)=(T_n)$ and the first claim follows.

  \ref*{l-k0-cycgen:towers}
  Since $\fagrp{\towers{R}}$ is the quotient group of $\famon{\towers{R}}$, the claim follows immediately from \ref*{l-k0-cycgen:constr}.

  \ref*{l-k0-cycgen:cgrp}
  By \ref*{l-k0-cycgen:towers}, any element of $\cgrp{R}=\fagrp{\towers{R}} /\qcp{R}$ can be written in the form $\langle R/I\rangle - \langle R/J\rangle$ with $I$,~$J$ right $R$-ideals in the principal genus.
  Let $a \in J^\bullet$.
  Then $-\langle R/J \rangle = \langle J / R \rangle = \langle J / aR \rangle$.
  Since $(R/aR)$ and $(R/J) \in \famon{\towers{R}}$, also $(J/aR) \in \famon{\towers{R}}$.
  Thus, by \ref*{l-k0-cycgen:constr}, there exists a right $R$-ideal $K$ such that $K \subset I$ and $(I/K) = (J/aR)$.
  Then $\langle R / K \rangle = \langle R / I \rangle + \langle I / K \rangle = \langle R / I \rangle - \langle R / J \rangle.$
\end{proof}

\begin{remark}
  If $R$ is a Dedekind prime ring then $\famon{\simple{R}}=\famon{\towers{R}}$.
  Thus, if $M$ is a module of finite length, then $(M)=(R/I)$ for some right $R$-ideal $I$ by \ref*{l-k0-cycgen:constr}.
  If $R$ is not a Dedekind prime ring, this is no longer true in general.
  For instance, if $V$ is an unfaithful simple module that is contained in a non-trivial cycle tower and $\rho(R,V) = 1$, then $(V) + (V)$ cannot be represented by a cyclic module.
  (If $I \subset R$ is a right $R$-ideal with $R/I \cong V$, then $\rho(I,V)=0$ by \cref{cor-genus-of-simples}.)
\end{remark}

We are now ready to prove the main theorem of this section.
Recall that the uniform dimension gives a homomorphism of abelian groups $\udim\colon K_0(R) \to \bZ$.
\begin{thm}\label{t-k0-iso-pcg}
  Let $R$ be an HNP ring.
  There exists an isomorphism of abelian groups
  \[
  \Phi\colon K_0\modfl(R)/\qcp{R} \isomto \ker(\udim) \subset K_0(R)
  \]
  such that $\Phi( \langle R/I\rangle) = [I] - [R]$ for all right $R$-ideals $I$, and
  \[
  \ker(\udim) \cong \pcg{R} \times \bigoplus_{T \in \towers{R}} \bZ^{\card{T}-1}.
  \]
  The isomorphism $\Phi$ restricts to an isomorphism
  \[
  \phi\colon \cgrp{R} \to \pcg{R}
  \]
  such that $\phi(\langle R / I\rangle) = [I] - [R] = \stzcls{I} - \stzcls{R}$ for all right $R$-ideals $I$ in the principal genus.
\end{thm}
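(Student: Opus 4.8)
The plan is to obtain $\Phi$ as the map induced by $\Phi_0$ from \cref{l-ex-phi0} after passing to the quotient by $\qcp{R}$, and then to pin down its kernel and image using the results already assembled in this section.

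First I would note that $\Phi_0$ factors through $K_0\modfl(R)/\qcp{R}$, since $\qcp{R} \subset \ker(\Phi_0)$ by \cref{l-prinker}; call the induced homomorphism $\Phi$. For any right $R$-ideal $I$, the module $R/I$ has finite length and $I$ is finitely generated projective (as $R$ is hereditary Noetherian), so writing $R/I \cong P/Q$ with $P=R$ and $Q=I$ gives $\Phi(\langle R/I\rangle) = \Phi_0((R/I)) = [I]-[R]$, which is the desired formula. By \cref{l-image}, the image of $\Phi_0$, hence of $\Phi$, is exactly $\ker(\udim)$, and that lemma simultaneously yields the direct-product decomposition $\ker(\udim) \cong \pcg{R} \times \bigoplus_{T \in \towers{R}} \bZ^{\length{T}-1}$. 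So $\Phi$ is a surjection onto $\ker(\udim)$ with the claimed target description.

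The heart of the argument is injectivity, i.e.\ $\ker(\Phi_0)=\qcp{R}$. For the inclusion ``$\subset$'': if $x \in \ker(\Phi_0)$ then $x \in \fagrp{\towers{R}}$ by \cref{l-prinker}, so by \subref{l-k0-cycgen:towers} we may write $x = (R/I) - (R/J)$ with $I$ and $J$ right $R$-ideals in the principal genus. Then $\mathbf 0 = \Phi_0(x) = [I]-[J]$ in $K_0(R)$, so $I$ and $J$ are stably isomorphic; by \cite[Corollary 35.6]{levy-robson11} this gives $R \oplus I \cong R \oplus J$, and then \cref{p-stable-iso-to-equi} yields $\langle R/I\rangle = \langle R/J\rangle$, that is, $x \in \qcp{R}$. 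Together with the reverse inclusion from \cref{l-prinker}, this shows $\Phi$ is an isomorphism onto $\ker(\udim)$. I expect this to be the main obstacle — or, more precisely, essentially all of the weight has already been moved into \cref{p-stable-iso-to-equi} and the projective-module machinery of \cite{levy-robson11} underlying \cref{l-image} and \cite[Corollary 35.6]{levy-robson11}; what remains here is only to chain these together.

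Finally, for the restriction to $\cgrp{R}$: by \subref{p-f:ker} we have $\fagrp{\towers{R}} = \ker(\Psi^+\circ\Phi_0) = \Phi_0^{-1}(\ker(\Psi^+))$, and since $\ker(\Psi^+) = \pcg{R} \subset \ker(\udim) = \im(\Phi_0)$, this forces $\Phi_0(\fagrp{\towers{R}}) = \pcg{R}$. Hence $\Phi$ restricts to an isomorphism $\phi\colon \cgrp{R} = \fagrp{\towers{R}}/\qcp{R} \to \pcg{R}$. For a right $R$-ideal $I$ in the principal genus we have $\Psi(I)=\Psi(R)$, so $\cB(I)=\cB(R)$ and therefore $\stzcls{I} - \stzcls{R} = ([I]-[\cB(I)]) - ([R]-[\cB(R)]) = [I]-[R]$, which gives the last displayed identity $\phi(\langle R/I\rangle) = [I]-[R] = \stzcls{I}-\stzcls{R}$.
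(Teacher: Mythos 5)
Your proposal is correct and follows essentially the same route as the paper: induce $\Phi$ from $\Phi_0$ via \cref{l-prinker}, get surjectivity and the decomposition of $\ker(\udim)$ from \cref{l-image}, and reduce injectivity to \cref{p-stable-iso-to-equi} through \cref{l-k0-cycgen}. The only (harmless) deviations are cosmetic: for injectivity you keep two ideals $I$, $J$ and apply \cref{p-stable-iso-to-equi} symmetrically, where the paper first collapses to a single ideal via \subref{l-k0-cycgen:cgrp} and takes $J=R$; and for surjectivity of $\phi$ you use the formal identity $\Phi_0(\fagrp{\towers{R}})=\ker(\Psi^+)\cap\im(\Phi_0)=\pcg{R}$, where the paper instead constructs an ideal in the principal genus with prescribed Steinitz class via independence of the invariants.
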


\begin{proof}
  Since $\qcp{R} \subset \ker(\Phi_0)$ is immediate from the definitions and $\im \Phi_0 = \ker(\udim)$ by \cref{l-image}, the map $\Phi_0$ induces an epimorphism
  \begin{equation*}
    \Phi\colon K_0\modfl(R)/ \qcp{R} \to \ker(\udim).
  \end{equation*}
  We show that $\Phi$ is a monomorphism.

  From \cref{l-prinker}, we have $\ker(\Phi) \subset \fagrp{\towers{R}}/ \qcp{R}=\cgrp{R}$.
  \cref{l-k0-cycgen} implies $\cgrp{R} = \{\, \langle R / I \rangle \mid \text{$I$ is a right $R$-ideal in the principal genus}\,\}$.
  Let $I$ be a right $R$-ideal in the principal genus such that $\Phi(\langle R / I \rangle) = [I] - [R] = \mathbf 0$.
  Then $I \oplus R \cong R \oplus R$.
  \Cref{p-stable-iso-to-equi} implies $\langle R / I \rangle = \langle R / R \rangle = \mathbf 0$.
  Hence $\Phi$ is an isomorphism.

  Now let $\phi$ denote the restriction of $\Phi$ to $\cgrp{R}$.
  By \subref{p-f:ker}, we have $\Psi^+ \circ \Phi_0((T)) = 0$ for any tower $T$.
  Thus, the image of $\phi$ is contained in $\pcg{R}=\ker(\Psi^+)$.
  If $I$ is a right $R$-ideal in the principal genus, then $\phi(\langle R / I\rangle) = [I] - [R] = \stzcls{I} - \stzcls{R}$ since $\stzcls{I} = [I] - [B]$ and $\stzcls{R} = [R] - [B]$ for the unique $B$ in the base-point set with $\Psi(B)=\Psi(R)$.
  If $g \in \pcg{R}$, there exists a right $R$-ideal $I$ of $R$ such that $\Psi(I) = \Psi(R)$ and $\stzcls{I}=g+\stzcls{R}$, by independence of rank and Steinitz class.
  Then $\phi(\langle R/I\rangle) = g$ and thus $\phi$ is surjective.
\end{proof}

\begin{remark}
  \begin{enumerate}
  \item
    While the Steinitz class depends on the choice of base-point set, $\stzcls{I} - \stzcls{R}$ does not.
    The isomorphism $\cgrp{R} \cong \pcg{R}$ is independent of the choice of base-point set.

  \item
    To obtain a transfer homomorphism, we will later have to impose the condition that every stably free right $R$-ideal is free.
    In this case, $[I]=0$ implies that $I$ is principal, and hence $\langle R / I \rangle = \mathbf 0$.
    Thus, under this additional condition, the proof of the injectivity of $\Phi$ simplifies significantly.
  \end{enumerate}
\end{remark}

The following observation is useful in connection with \subref{thm-transfer:cp} and \labelcref{thm-transfer:cp-weak}.
See \cite[Theorems 2.3.11 and 2.4.8]{ghk06} for various equivalent characterizations of commutative Krull monoids.

\begin{prop} \label{p-krull}
  Let $R$ be an HNP ring such that every stably free right $R$-ideal is free.
  Then the monoid $\cp{R}$ is a commutative Krull monoid, and the inclusion $\cp{R} \to \famon{\towers{R}}$ is a cofinal divisor homomorphism.
\end{prop}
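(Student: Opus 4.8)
The plan is to deduce everything from the standard characterization of commutative Krull monoids: it suffices to show that the inclusion $\cp{R} \hookrightarrow \famon{\towers{R}}$ is a cofinal divisor homomorphism into a free abelian monoid, for then $\cp{R}$ is a commutative Krull monoid by \cite[Theorem 2.4.8]{ghk06}. As a preliminary I would note that $\cp{R}$ is indeed a submonoid of $\famon{\towers{R}}$: for $a \in R^\bullet$ one has $aR \cong R$ as right $R$-modules, so $aR$ lies in the principal genus and $(R/aR) \in \famon{\towers{R}}$ by \cref{p-f}; moreover $\cp{R}$ is cancellative (being a submonoid of the group $K_0\modfl(R)$) and reduced (being a submonoid of the reduced monoid $\famon{\towers{R}}$).

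The key step is the saturation identity $\cp{R} = \famon{\towers{R}} \cap \qcp{R}$, from which the divisor homomorphism property is immediate: if $a$,~$b \in R^\bullet$ with $(R/aR) \mid (R/bR)$ in $\famon{\towers{R}}$, then $w := (R/bR) - (R/aR)$ lies in $\famon{\towers{R}}$ and, being a difference of elements of $\cp{R}$, also in $\qcp{R}$; hence $w = (R/cR)$ for some $c \in R^\bullet$, and $(R/bR) = (R/aR) + (R/cR)$ is the required factorization inside $\cp{R}$. In the identity the inclusion $\subseteq$ is trivial, so the content is $\supseteq$. Given $w \in \famon{\towers{R}} \cap \qcp{R}$, I would write $w$ as a finite sum of towers and apply the construction in \cref{l-k0-cycgen} (with $I = R$, which lies in the principal genus) to produce a right $R$-ideal $J \subseteq R$ with $(R/J) = w$. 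Since $w \in \famon{\towers{R}}$, the ideal $J$ is in the principal genus, so $\Psi(J) = \Psi(R)$; and since $\Phi(\langle R/J \rangle) = [J] - [R]$ by \cref{t-k0-iso-pcg} while $\langle R/J \rangle = \mathbf 0$ because $(R/J) = w \in \qcp{R}$, it follows that $[J] = [R]$, i.e.\ $\stzcls{J} = \stzcls{R}$. Then \cref{t-main-structure} yields that $J$ is stably isomorphic to $R$ (equal Steinitz class and genus), hence stably free, hence free by hypothesis; as a right $R$-ideal is essential we have $\udim J = \udim R$, so $J \cong R$. Writing $c$ for the image of $1$ under an isomorphism $R \isomto J$, we have $J = cR$ with $c \in R^\bullet$ (the isomorphism is injective, and in an HNP ring a left non-zero-divisor is a non-zero-divisor), so $w = (R/cR) \in \cp{R}$.

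For cofinality, given $f \in \famon{\towers{R}}$ I would again use \cref{l-k0-cycgen} (with $I = R$) to obtain a right $R$-ideal $J \subseteq R$ with $(R/J) = f$, and then choose a non-zero-divisor $x \in J$ (which exists since $J$ is a right $R$-ideal). From $xR \subseteq J \subseteq R$ one gets $(R/xR) = (R/J) + (J/xR) = f + (J/xR)$ in $K_0\modfl(R)$; here $(J/xR)$ lies in $\famon{\simple{R}}$ (it is the class of an actual module) and in $\fagrp{\towers{R}}$ (being a difference of two elements of $\famon{\towers{R}}$), hence in $\famon{\towers{R}}$. Thus $f$ divides $(R/xR) \in \cp{R}$ in $\famon{\towers{R}}$, which gives cofinality.

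The main obstacle is the saturation identity — concretely, realizing a prescribed class $w \in \famon{\towers{R}} \cap \qcp{R}$ by a \emph{principal} right ideal. This is exactly where the hypothesis that every stably free right $R$-ideal is free enters, and it forces us to combine the constructive \cref{l-k0-cycgen}, the isomorphism $\cgrp{R} \cong \pcg{R}$ of \cref{t-k0-iso-pcg}, and \cref{t-main-structure}; the remaining points (the divisor homomorphism property from the identity, cofinality, and Krullness via \cite{ghk06}) are formal.
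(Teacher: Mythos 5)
Your proposal is correct and takes essentially the same route as the paper: realize the relevant element of $\famon{\towers{R}}$ as $(R/J)$ for a right $R$-ideal $J$ via \cref{l-k0-cycgen}, deduce $[J]=[R]$ in $K_0(R)$ so that $J$ is stably free and hence principal by hypothesis, and obtain cofinality by picking a non-zero-divisor inside an ideal realizing a given class. The only (inessential) differences are that you package the divisor-homomorphism property as the saturation identity $\cp{R}=\famon{\towers{R}}\cap\qcp{R}$ and route the $K_0$-computation through $\Phi$ from \cref{t-k0-iso-pcg} rather than directly through $\Phi_0$ of \cref{l-ex-phi0}, and that you treat cofinality for arbitrary elements via \cref{l-k0-cycgen} instead of reducing to single towers via \cref{l-ex-uniserial}.
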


\begin{proof}
  By \cref{l-prinker}, the monoid $\cp{R}$ is a submonoid of the free abelian monoid $\famon{\towers{R}}$.
  If we show that the inclusion is a divisor homomorphism, it follows that $\cp{R}$ is a Krull monoid.
  Thus we have to show: If $a$,~$b \in R^\bullet$ with $(R/aR) - (R/bR) \in \famon{\towers{R}}$, then $(R/aR)-(R/bR) \in \cp{R}$.
  Since $(R/aR) - (R/bR) \in \famon{\towers{R}}$, there exists a right $R$-ideal $I$ such that $(R/I) = (R/aR) - (R/bR)$ (by \cref{l-k0-cycgen}).
  But then $[I]-[R] = ([aR]-[R]) + ([bR] - [R])$ by \cref{l-ex-phi0}, and therefore $[I]=[R]$.
  Hence, $I$ is stably free.
  Thus, by our assumption, $I=cR$ with $c \in R^\bullet$ and $(R/aR) - (R/bR) = (R/cR) \in \cp{R}$.

  To show that the inclusion is cofinal, we need to show: If $T$ is a tower, then there exists an element $a \in R^\bullet$ such that $(R/aR)=(T) + (X)$ with $(X) \in \famon{\towers{R}}$.
  Let $T$ be a tower.
  By \cref{l-ex-uniserial}, there exists a right $R$-ideal $I$ such that $(R/I)=(T)$.
  Any $a \in I^\bullet$ has the required property.
\end{proof}

\subsection{A crucial lemma}
We now prove a combinatorial lemma that will yield \cref{l-int-mod} about modules of finite length below.
The latter lemma will be vital in the construction of a transfer homomorphism in \cref{sec-transfer-hom}.
The combinatorial lemma says that, if a finite cyclic group $\sC_n$ with generator $g$ is covered by $l$ arithmetic progressions with difference $g$, then there exist pairwise disjoint (possibly empty) starting segments of these arithmetic progressions that cover $\sC_n$.
\begin{lemma} \label{lemma-comb}
  Let $n \in \bN$, let $(\sC_n,+)$ be a cyclic group of order $n$, and let $g$ be a generator of $\sC_n$.
  Let $l \in \bN$, let $(k_1,\ldots,k_l) \in \bN_0^l$, and let $(a_1,\ldots,a_l) \in \sC_n^{l}$.
  Suppose that
  \[
  \bigcup_{i=1}^l \big\{\, a_i + j g \mid j \in [0,k_i] \,\big\} = \sC_n.
  \]
  Then there exist $m_1$, $\ldots\,$,~$m_l \in \bN_0$ with $m_i \in [0,k_i+1]$ such that $m_1+\cdots+m_l=n$ and
  \[
  \bigcup_{i=1}^l \big\{\, a_i + j g \mid j \in [0,m_i-1] \,\big\} = \sC_n.
  \]
\end{lemma}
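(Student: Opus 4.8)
The plan is to argue by induction on $l$. For $l=1$, the covering hypothesis forces $k_1 + 1 \ge n$, and one simply takes $m_1 = n$. For the inductive step with $l \ge 2$, the key device is that if the ``arc'' $I_i := \{\, a_i + jg \mid j \in [0,k_i] \,\}$ of one progression is contained in the arc $I_{i'}$ of another (for some $i \ne i'$), then the $i$-th progression may be discarded: the remaining $l-1$ progressions still cover $\sC_n$, the induction hypothesis supplies suitable $m_{i'}$ for $i' \ne i$, and setting $m_i = 0$ finishes this case.

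It then remains to treat configurations in which no arc is contained in another. In that case the arcs are pairwise distinct, every $k_i \le n-2$ (otherwise $I_i = \sC_n$, which since $l \ge 2$ would contain some other arc), and the start points $a_1, \dots, a_l$ are pairwise distinct; after cyclically relabelling I may assume $a_1, \dots, a_l$ occur in this order as one goes around $\sC_n$ in steps of $g$. Let $d_i$ be the number of steps of $g$ leading from $a_i$ to $a_{i+1}$ (indices taken mod $l$), so that $d_1 + \dots + d_l = n$. I claim $d_i \le k_i + 1$ for every $i$; granting this, the choice $m_i := d_i$ works, since then $m_i \in [1, k_i+1]$, $\sum_{i} m_i = n$, and the sets $\{\, a_i + jg \mid j \in [0, d_i - 1] \,\}$ are precisely the consecutive blocks running from each $a_i$ up to (but not including) $a_{i+1}$, which partition $\sC_n$.

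The heart of the matter is therefore the claim $d_i \le k_i + 1$ in this no-containment situation, which I would establish by contradiction. Suppose $d_{i_0} > k_{i_0}+1$ and set $q := a_{i_0} + (k_{i_0}+1)g$; then $q$ lies strictly between $a_{i_0}$ and $a_{i_0+1}$, hence is not a start point and does not lie in $I_{i_0}$, so the progression $I_{i_1}$ covering $q$ satisfies $i_1 \ne i_0$. Consider the initial segment $J$ of $I_{i_1}$ that ends at $q$. Its first element $a_{i_1}$ is a start point, so it neither lies strictly between $a_{i_0}$ and $a_{i_0+1}$ nor equals $a_{i_0}$; hence, running along $J$ from $a_{i_1}$ toward $q$, one must pass through $a_{i_0}$ itself. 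This is the one step requiring care, and it uses precisely that the open stretch between the consecutive start points $a_{i_0}$ and $a_{i_0+1}$ contains no start point. Once $a_{i_0} \in J$, a short count---$a_{i_0}$ and $q$ both lie in the arc $J$, which has at most $n-1$ elements, and $q$ is exactly $k_{i_0}+1 \le n-1$ steps of $g$ past $a_{i_0}$---forces $J$, and with it $I_{i_1}$, to contain $\{\, a_{i_0} + jg \mid j \in [0, k_{i_0}+1] \,\} \supseteq I_{i_0}$, contradicting the no-containment hypothesis. I expect this passing-through-$a_{i_0}$ step to be the only genuine obstacle; the rest is routine bookkeeping with the constraints $m_i \in [0,k_i+1]$ and $\sum_i m_i = n$.
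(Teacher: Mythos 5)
Your proof is correct, and while it shares the paper's overall frame (induction on $l$, the same base case, and a discard-and-recurse reduction), the heart of the argument is genuinely different. The paper reduces to the case where no proper subfamily of the arcs covers $\sC_n$, so that each arc $A_i$ has a ``private'' element, and then defines $m_i\in[1,k_i+1]$ maximal with $a_i+(m_i-1)g$ private to $A_i$; covering and disjointness of the resulting initial segments are then verified separately (covering by a closure-under-adding-$g$ argument, disjointness by another containment argument). You instead discard only arcs contained in other arcs, note that the start points are then pairwise distinct, order them cyclically, and take $m_i$ to be the gap $d_i$ to the next start point; covering and disjointness are then automatic (the blocks tile the circle), and the entire burden falls on the single inequality $d_i\le k_i+1$, which your wrap-around containment contradiction establishes correctly -- the ``short count'' is legitimate because $k_{i_1}\le n-2$ makes the parametrization $j\mapsto a_{i_1}+jg$ of $I_{i_1}$ injective, so the index difference $t-s\equiv k_{i_0}+1\pmod n$ with both sides in $[0,n-1]$ forces equality and hence $I_{i_0}\subseteq I_{i_1}$. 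Note that your reduced case is broader than the paper's (an arc covered by the union of two others survives your reduction but not the paper's), and the two constructions generally produce different tuples $(m_1,\ldots,m_l)$; your version buys an explicit partition into consecutive blocks anchored at the start points with one inequality to check, while the paper's avoids choosing a cyclic ordering and the attendant index bookkeeping at the cost of two separate verifications.
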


\begin{proof}
  Set $A_i = \{\, a_i + jg \mid j \in [0,k_i] \,\}$ for $i \in [1,l]$.
  We prove the claim by induction on $l$.
  In case $l=1$, we necessarily have $k_1 \ge n-1$.
  The claim follows with the choice $m_1 = n$.
  Suppose now that $l > 1$ and that the claim holds for smaller $l$.
  If there exists a proper subset $\Omega \subsetneq [1,l]$ such that $\smash\bigcup_{i \in \Omega} A_i = \sC_n$, then we are done by induction hypothesis, setting $m_i = 0$ for $i \in [1,l]\setminus \Omega$.
  We may therefore from now on assume that each $A_i$ contains an element which is not contained in $\smash\bigcup_{j=1,j \ne i}^n A_j$.
  For each $i \in [1,l]$, let $m_i \in [1,k_i+1]$ be maximal such that $a_i+(m_i-1)g$ is not contained $\smash\bigcup_{j=1,j\ne i}^n A_j$.
  We claim that this choice works.

  Let $M=\smash\bigcup_{i=1}^l M_i$ with $M_i=\{\, a_i + jg \mid j \in [0,m_i-1]\,\}$.
  First we show that indeed $M=\sC_n$.
  Clearly $M \ne \emptyset$ since $a_i \in M_i$ for all $i \in [1,l]$.
  It therefore suffices to show: If $b \in M$, then also $b + g \in M$.
  Let $b \in M$.
  Then $b = a_i + t g$ for some $i \in [1,l]$ and $t \in [0,m_i-1]$.
  If $t < m_i-1$, then $b+g=a_i+(t+1)g \in M$.
  Suppose now that $t=m_i-1$.
  Since $\smash\bigcup_{j=1}^l A_j = \sC_n$, there exists at least one $j \in [1,l]$ such that $b+g \in A_j$.
  If $A_i$ is the only set containing $b+g$, then, by choice of $m_i$, necessarily $m_i=k_i+1$.
  But then $A_i \subset M$ and hence $b+g \in M$.
  Otherwise, we may suppose $j \ne i$.
  Thus $b+g = a_j + s g$ for some $s \in [0,k_j]$.
  If $s > 0$, then also $a_i + (m_i -1) g = b = a_j + (s-1)g \in A_j$, contradicting the fact that $a_i + (m_i -1) g$ is only contained in $A_i$.
  Thus $s=0$, and we have $b+g=a_j \in M$.

  We now show $m_1+\cdots+m_r=n$, that is, that the sets $M_1$,~$\ldots\,$,~$M_l$ are pairwise disjoint.
  Suppose that there exist distinct $i$,~$j \in [1,l]$ and $s \in [0,m_i-1]$, $t \in [0,m_j-1]$ such that $a_i + s g = a_j + t g$.
  We may without loss of generality assume $s \ge t$, and subsequently $t=0$.
  Thus $a_i + sg = a_j \in A_i \cap A_j$.
  Since $a_i + (m_i -1)g \notin A_j$ by our choice of $m_i$, we must have $m_j - 1 < m_i - 1 - s$.
  But then $A_j \subset A_i$, contradicting the hypothesis that $A_j$ contains an element not contained in $A_i$.
\end{proof}

Every module $M$ of finite length has a unique decomposition $M=F\oplus C$ where all composition factors of $F$ belong to faithful towers and all composition factors of $C$ belong to cycle towers (see \cite[Theorem 41.1]{levy-robson11}).
If $U$ is an indecomposable module of finite length all of whose composition factors belong to cycle towers, then $U$ is uniserial (see \cite[Theorem 41.2]{levy-robson11}).
Moreover, the composition factors of $U$ correspond to a segment of a repetition of a unique cycle tower $T$.
Explicitly, if $U=U_0 \supsetneq U_1 \supsetneq U_2 \supsetneq \dots \supsetneq U_n=\mathbf 0$ are the submodules of $U$, then $(U_{i}/U_{i+1})$ is the successor of $(U_{i-1}/U_i)$ for all $i \in [1,n-1]$.
Since the composition factors of $U$ may be labeled cyclically, the following lemma is an easy consequence of the previous one.
Due to its importance in the sequel, we state the proof anyway.

\begin{lemma} \label{l-int-mod}
  Let $M$ be a module of finite length.
  Suppose that there is a cycle tower $T$ such that $(M) = (T) + (X)$ with $(X) \in \famon{\simple{R}}$.
  \begin{enumerate}
    \item\label{l-int-mod:lower} There exists a submodule $K$ of $M$ such that $(K)=(T)$.
    \item\label{l-int-mod:upper} There exists a submodule $L$ of $M$ such that $(M/L)=(T)$.
  \end{enumerate}
\end{lemma}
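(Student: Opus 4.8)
The plan is to deduce both assertions from the combinatorial \cref{lemma-comb} together with the structure theory of finite-length $R$-modules; this is the ``easy consequence'' alluded to just before the statement. First I would reduce to a single uniserial block. By \cite[Theorem 41.1]{levy-robson11} write $M = F \oplus C$ with all composition factors of $F$ in faithful towers and all of $C$ in cycle towers, and by \cite[Theorem 41.2]{levy-robson11} the indecomposable summands of $C$ are uniserial, each with its composition factors in a single cycle tower; grouping them by that tower gives $C = \bigoplus_{T'} C_{T'}$, and the composition factors of $M$ lying in $T$ are precisely those of $C_T$. Comparing the coordinates on the simples of $T$ in $(M) = (T) + (X)$, and using that each such simple has multiplicity $1$ in $(T)$ while $(X) \in \famon{\simple{R}}$, one finds $(C_T) = (T) + (X_T)$ with $(X_T) \in \famon{\simple{R}}$ supported on the simples of $T$; in particular $C_T \ne \mathbf 0$. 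Since a submodule of $C_T$ is a submodule of $M$, and $M/(F \oplus \bigoplus_{T' \ne T} C_{T'} \oplus L') \cong C_T/L'$ for any submodule $L' \subset C_T$, it suffices to prove \ref*{l-int-mod:lower} and \ref*{l-int-mod:upper} with $M$ replaced by $C_T$.

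Next I would set up the cyclic bookkeeping. Enumerate $T$ as $W_1, \dots, W_n$ with $W_{j+1}$ the successor of $W_j$ (indices modulo $n$), and identify the positions with a cyclic group $(\sC_n,+)$, the generator $g$ corresponding to ``successor''. Writing $C_T = U_1 \oplus \dots \oplus U_l$, the description of the composition series of a uniserial module recalled before the lemma shows that $U_i$ has composition factors $W_{a_i}, W_{a_i+1}, \dots, W_{a_i+k_i}$ read from top to bottom, for some $a_i \in \sC_n$ and $k_i \in \bN_0$. Consequently the submodules of $U_i$ are exactly its ``bottom segments'' $W_{a_i+t}, \dots, W_{a_i+k_i}$, and its quotients exactly the ``top segments'' $W_{a_i}, \dots, W_{a_i+t-1}$. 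Because every simple of $T$ occurs in at least one $U_i$, we have $\bigcup_{i=1}^{l} A_i = \sC_n$ with $A_i := \{\, a_i + jg \mid j \in [0,k_i] \,\}$, which is exactly the hypothesis of \cref{lemma-comb}.

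Finally I would apply \cref{lemma-comb} twice. For \ref*{l-int-mod:upper}, apply it directly: it yields $m_i \in [0,k_i+1]$ with $m_1 + \dots + m_l = n$ whose starting segments cover $\sC_n$ and hence, by the count $m_1 + \dots + m_l = n = \card{\sC_n}$, partition it; taking $L_i \subset U_i$ with $U_i/L_i$ the top $m_i$ composition factors then gives $\sum_{i=1}^{l} (U_i/L_i) = \sum_{j=1}^{n} (W_j) = (T)$, so $L = \bigoplus_i L_i$ works. For \ref*{l-int-mod:lower}, apply \cref{lemma-comb} again but with $-g$ in place of $g$ and with the bottom position $a_i + k_i g$ of $U_i$ as the $i$-th basepoint; the hypothesis is the same set $\bigcup_i A_i = \sC_n$ (the set $A_i$ does not depend on the orientation), and the resulting starting segments are now precisely the bottom segments of the $U_i$. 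They again partition $\sC_n$, so the corresponding submodules $K_i \subset U_i$ satisfy $\sum_{i=1}^{l} (K_i) = \sum_{j=1}^{n} (W_j) = (T)$, and $K = \bigoplus_i K_i$ works.

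The only real difficulty I anticipate is organizational: carrying the composition-factor bookkeeping through the reduction from $M$ to the single uniserial block $C_T$, and correctly pairing the two cases of \cref{lemma-comb} with the module structure --- \ref*{l-int-mod:upper} with quotients of the $U_i$ (the lemma used verbatim) and \ref*{l-int-mod:lower} with submodules of the $U_i$ (the lemma used with the reversed generator $-g$, i.e.\ reading each $U_i$ from the bottom up). Everything else is a routine translation.
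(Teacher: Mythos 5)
Your proposal is correct and follows essentially the same route as the paper: reduce to the summand whose composition factors lie in the cycle tower $T$, index the uniserial summands cyclically, and apply \cref{lemma-comb} (the paper proves \ref*{l-int-mod:lower} by anchoring at the socles — i.e.\ your ``reversed generator'' reading — and declares \ref*{l-int-mod:upper} analogous, exactly the two orientations you spell out). No gaps.
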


\begin{proof}
  We show \ref*{l-int-mod:lower}; the proof of \ref*{l-int-mod:upper} is analogous.
  Grouping indecomposable summands by their towers, we see that $M=M_T \oplus M_0$ with all composition factors of $M_T$ belonging to $T$, and all composition factors of $M_0$ belonging to towers other than $T$.
  Without restriction, we may assume $M=M_T$.

  Let $n$ be the length of $T$, and label a set of representatives for the isomorphism classes of simple modules of $T$ by $W(\res 1)$, $\ldots\,$,~$W(\res n)$, where $\res a$ denotes the residue class of $a$ in $\bZ/n \bZ$, and $W(\res{a+1})$ is the successor of $W(\res a)$.
  Let $M = U_1 \oplus \cdots \oplus U_l$ with uniserial modules $U_1$, $\ldots\,$,~$U_l$.
  For $i \in [1,l]$, let $k_i+1$ be the length of $U_i$, and denote the submodules of $U_i$ by
  \[
  U_i = U_{i,0} \supsetneq U_{i,1} \supsetneq \dots \supsetneq U_{i,k_i} \supsetneq U_{i,k_i+1} = \mathbf 0.
  \]
  Let $\res{a_i} \in \bZ/n \bZ$ be such that $U_{i,k_i} \cong W(\res{a_i})$.
  Then $U_{i,j}/U_{i,j+1} \cong W(\res{a_i-(k_i-j)})$ for all $i \in [1,l]$ and $j \in [0,k_i]$.
  By our assumption, for every $\res a \in \bZ/n\bZ$, the isomorphism class of $W(\res a)$ appears among $(U_{i,j}/U_{i,j+1})$ for some $i \in [1,l]$ and $j \in [0,k_i]$.
  \Cref{lemma-comb} implies that there exist $m_i \in [0,k_i+1]$ with $m_1+ \cdots + m_l = n$ such that
  \[
  \bigcup_{i=1}^l \big\{\, \res{ a_i - j} \mid j \in [0,m_i-1] \,\big\} = \bZ/n\bZ.
  \]
  It follows that $\sum_{i=1}^l \sum_{j=0}^{m_i-1} (W(\res{a_i-j})) = (T)$ in $K_0\modfl(R)$.
  Choosing $K = U_{1,k_1-(m_1-1)} \oplus \cdots \oplus U_{l,k_l-(m_l-1)} \subset M$, the claim holds.
\end{proof}

\subsection{Tower-maximal right $R$-ideals.}

In this subsection, we introduce subsets $\pcgm{R}$ of $\pcg{R}$ and $\cgrpm{R}$ of $\cgrp{R}$, which will play an important role in the construction of a transfer homomorphism.
Moreover, we show that these sets are preserved under passage to a Morita equivalent ring or a Dedekind right closure.

\begin{lemma} \label{l-mgen}
  Let $Q \subset P$ be finitely generated projective modules.
  Consider the following statements.
  \begin{equivenumerate}
    \item \label{l-mgen:mod} $\Psi(Q)=\Psi(P)$ and $Q$ is a proper submodule of $P$ maximal with respect to this property.
    \item \label{l-mgen:tow} $(P/Q) = (T)$ for some tower $T$.
  \end{equivenumerate}
  Then \ref*{l-mgen:tow}${}\Rightarrow{}$\ref*{l-mgen:mod}.
  If every faithful tower is trivial, then also \ref*{l-mgen:mod}${}\Rightarrow{}$\ref*{l-mgen:tow}.
\end{lemma}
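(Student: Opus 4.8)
The plan is to reduce both implications to \cref{p-f}, which renders the condition $\Psi(Q)=\Psi(P)$ equivalent to $(P/Q)\in\famon{\towers{R}}$, together with \cref{l-int-mod}, which splits a cycle tower off the top of a finite-length module. Note that $\Psi(Q)=\Psi(P)$ entails $\udim Q=\udim P$, so $P/Q$ has finite length, and throughout I would pass freely between a module $Q'$ with $Q\subseteq Q'\subseteq P$ and the corresponding submodule $Q'/Q$ of $P/Q$.

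For \ref*{l-mgen:tow}${}\Rightarrow{}$\ref*{l-mgen:mod}: assuming $(P/Q)=(T)$, \cref{p-f} gives $\Psi(Q)=\Psi(P)$, and $(T)\ne\mathbf 0$ forces $P/Q\ne\mathbf 0$, i.e.\ $Q\subsetneq P$. For maximality I would take any $Q'$ with $Q\subsetneq Q'\subseteq P$ and $\Psi(Q')=\Psi(P)$. Then $P/Q'$ is a quotient of $P/Q$, so its composition factors form a sub-multiset of those of $P/Q$; as $(P/Q)=(T)$, each simple of $T$ thus occurs in $P/Q'$ at most once and no other simple occurs. On the other hand $(P/Q')\in\famon{\towers{R}}$ by \cref{p-f}, and because distinct towers are supported on disjoint sets of simple modules (every simple lies in a unique tower) and the classes $(T)$ are $\bZ$-linearly independent in $\famon{\simple{R}}$, the only elements of $\famon{\towers{R}}$ supported inside $T$ with all multiplicities at most $1$ are $\mathbf 0$ and $(T)$. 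The first forces $P/Q'=\mathbf 0$, i.e.\ $Q'=P$; the second forces $(Q'/Q)=\mathbf 0$, i.e.\ $Q'=Q$, which is excluded. Hence $Q'=P$, proving maximality.

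For \ref*{l-mgen:mod}${}\Rightarrow{}$\ref*{l-mgen:tow}, under the hypothesis that every faithful tower is trivial, I would suppose $\Psi(Q)=\Psi(P)$ with $Q$ maximal proper, so $(P/Q)=\sum_{T'}c_{T'}(T')$ with $c_{T'}\in\bN_0$ and $(P/Q)\ne\mathbf 0$ by \cref{p-f}, and then assume for contradiction that $\sum_{T'}c_{T'}\ge 2$, aiming to produce $Q\subsetneq Q'\subsetneq P$ with $\Psi(Q')=\Psi(P)$. If some cycle tower $T'$ has $c_{T'}\ge 1$, I set $(X)=(P/Q)-(T')=(c_{T'}-1)(T')+\sum_{T''\ne T'}c_{T''}(T'')\in\famon{\towers{R}}\subseteq\famon{\simple{R}}$, which is nonzero because $\sum c_{T'}\ge 2$; then \subref{l-int-mod:upper} yields $L\subseteq P/Q$ with $((P/Q)/L)=(T')$, and the preimage $Q'$ of $L$ in $P$ works, since $(P/Q')=(T')$ gives $\Psi(Q')=\Psi(P)$ and $Q'\subsetneq P$, while $(Q'/Q)=(X)\ne\mathbf 0$ gives $Q\subsetneq Q'$. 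Otherwise no cycle tower occurs, so every tower with $c_{T'}\ge 1$ is faithful, hence trivial by hypothesis; thus every composition factor of $P/Q$ is a faithful simple module lying in a trivial tower, and $P/Q$ has length at least $2$. Picking any maximal submodule $N_1$ of $P/Q$ and letting $Q_1$ be its preimage in $P$, the simple quotient $P/Q_1\cong(P/Q)/N_1$ is one such composition factor, so $(P/Q_1)$ is the class of a (trivial) tower and $\Psi(Q_1)=\Psi(P)$ by \cref{p-f}; moreover $Q_1\subsetneq P$, and $N_1\ne\mathbf 0$ (as $P/Q$ has length at least $2$) gives $Q\subsetneq Q_1$. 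In both cases maximality is contradicted, so $\sum_{T'}c_{T'}=1$ and $(P/Q)=(T)$ for a single tower.

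The one genuinely delicate point is the splitting of a single tower off the \emph{top} of $P/Q$ so that the complementary submodule lands strictly between $Q$ and $P$ while still lying in the principal genus: for cycle towers this is exactly \subref{l-int-mod:upper} (which ultimately rests, via \cref{lemma-comb}, on the uniserial structure of indecomposable finite-length modules with cycle-tower composition factors), whereas for faithful towers the triviality hypothesis trivializes it, since a nonzero finite-length module always has a simple quotient and that quotient is then itself a full (trivial) tower. The remaining verifications — that the exhibited $Q'$ satisfies $(P/Q')\ne\mathbf 0$ and $(Q'/Q)\ne\mathbf 0$ — are routine composition-factor bookkeeping.
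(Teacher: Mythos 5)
Your proof is correct and follows essentially the same route as the paper: both directions rest on \cref{p-f} (genus equality versus $(P/Q)\in\famon{\towers{R}}$), on \subref{l-int-mod:upper} to split a cycle tower off the top, and on triviality of faithful towers, the only cosmetic difference being that you argue by contradiction via a strictly intermediate submodule where the paper directly produces $M$ with $(P/M)=(T_i)$ and invokes maximality to get $Q=M$. Your spelled-out maximality argument in \ref*{l-mgen:tow}${}\Rightarrow{}$\ref*{l-mgen:mod} is exactly the content the paper leaves implicit.
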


\begin{proof}
  \ref*{l-mgen:tow}${}\Rightarrow{}$\ref*{l-mgen:mod}:
  By \subref{p-f:gen}, we have $\Psi(Q)=\Psi(P)$.
  Since $(P/Q)=(T)$, the same proposition implies that the module $Q$ is maximal with respect to this property.

  Suppose every faithful tower is trivial.
  \ref*{l-mgen:mod}${}\Rightarrow{}$\ref*{l-mgen:tow}:
  By \subref{p-f:gen}, we have $(P/Q)=(T_1) + \cdots + (T_n)$ for some towers $T_1$, $\ldots\,$,~$T_n$ with $n \ge 1$.
  If one of the $T_i$ is a cycle tower, then \subref{l-int-mod:upper} implies that there exists a module $M$ such that $Q \subset M \subset P$ and $(P/M)=(T_i)$.
  If all of the $T_i$ are faithful towers, let $M$ be a maximal submodule of $P$ with $Q \subset M \subset P$ and let $i \in [1,n]$ be such that $(P/M)$ belongs to $(T_i)$.
  Since $T_i$ is trivial by assumption, again $(P/M)=(T_i)$.

  Since $(P/M)=(T_i)$, \subref{p-f:gen} implies $\Psi(M)=\Psi(P)$.
  Since $Q$ is maximal with this property, $Q=M$.
  Hence $n=1$ and $(P/Q)=(T_i)=(T_1)$.
\end{proof}

\begin{defi}
  Let $I$ be a right $R$-ideal.
  \begin{enumerate}
    \item $I$ is \emph{maximal in the principal genus}  if $I \subsetneq R$ is maximal among proper right $R$-ideals in the principal genus.
    \item $I$ is \emph{tower-maximal} if $(R/I) = (T)$ for some tower $T$.
  \end{enumerate}
\end{defi}

Note that, while $(R/I)=(T)$ for a tower-maximal right $R$-ideal $I$, it is not necessary for $R/I$ to be uniserial.

By \cref{l-mgen}, every tower-maximal right $R$-ideal is maximal in the principal genus.
If every faithful tower is trivial, the converse is also true.
However, in general, a right $R$-ideal which is maximal in the principal genus need not be tower-maximal.
(Counterexamples are given in the construction in \cref{p-nonhf} and by $x(x-y)R$ in \cref{e-no-ft}.)

\begin{defi} \label{d-pcgm}
  Let $\cgrpm{R} = \{\,\langle T \rangle \in \cgrp{R} \mid T \in \towers {R} \,\}$ and
  \begin{align*}
  \pcgm{R} &= \{\, \stzcls{I} - \stzcls{R} \in \pcg{R} \mid \text{$I$ is a tower-maximal right $R$-ideal}\,\} \\
           &= \phi(\cgrpm{R}).
  \end{align*}
\end{defi}

Before considering the behavior of $\pcgm{R}$ under Morita equivalence and passage to a Dedekind right closure, let us recall the notion of the genus class group (see \cite[\S25]{levy-robson11}).
Let $\mathbf 0 \ne P$ be a finitely generated projective module, and define
\[
\gcg{P} = \left\{\, [X] \in K_0(R) \mid \Psi(X)=\Psi(P)  \,\right\}
\]
Since any finitely generated projective module $X$ with $\udim(X)=\udim(P)$ is isomorphic to a submodule of $P$, in fact $\gcg{P} = \{\, [X] \in K_0(R) \mid X \subset P,\, \Psi(X)=\Psi(P) \,\}$.
On $\gcg P$ we define an operation $\boxplus_P$ by $[Z] = [X] \boxplus_P [Y]$ if and only if $Z \oplus P \cong X \oplus Y$.
Thus, $[X] \boxplus_P [Y] = [X] + [Y] - [P]$, where the operation on the right hand side is the usual one in $K_0(R)$.
With the operation $\boxplus_P$, the set $\gcg{P}$ is an abelian group with zero element $[P]$.
The group $\gcg P$ is the \emph{genus class group} of $P$.

If $\mathbf 0 \ne P$,~$Q$ are two finitely generated projective modules, then $(\gcg{P}, \boxplus_P) \cong (\gcg Q, \boxplus_Q)$.
The isomorphism is given as follows: If $[X] \in \gcg P$, then there exists a finitely generated projective module $Y$ such that $\Psi(Y) = \Psi(Q)$ and $\stzcls{Y} = \stzcls{X} + \stzcls{Q} - \stzcls{P}$.
The class $[X]$ in $\gcg P$ is mapped to $[Y]$ in $\gcg Q$.

Let $\gcgm P$ denote the subset of $\gcg P$ consisting of all classes which contain a submodule $M$ of $P$ such that $(P/M)=(T)$ for some tower $T$. (Note that $\Psi(M)=\Psi(P)$ by \subref{p-f:gen}.)
Recall that $\gcg R \cong \pcg R$ through $[I] \mapsto \stzcls{I}-\stzcls{R}=[I]-[R]$.
The set $\gcgm R$ is mapped to $\pcgm R$ under this isomorphism

\begin{lemma} \label{l-gcg-progen}
  Let $P$ and $Q$ be progenerators.
  Under the isomorphism $\gcg P \to \gcg Q$, the image of $\gcgm P$ is $\gcgm Q$.
\end{lemma}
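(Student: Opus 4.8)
The plan is to transport everything to $\pcg R$. For a progenerator $P$ I would consider the map $\beta_P\colon\gcg P\to\pcg R$, $[X]\mapsto[X]-[P]$ (equivalently $\stzcls X-\stzcls P$). Since $\Psi(X)=\Psi(P)$ for $[X]\in\gcg P$, the difference $[X]-[P]$ lies in $\ker(\Psi^+)=\pcg R$; because $[X]\boxplus_P[Y]-[P]=([X]-[P])+([Y]-[P])$, the map $\beta_P$ is a homomorphism $(\gcg P,\boxplus_P)\to(\pcg R,+)$; it is injective, since $[X]-[P]=[X']-[P]$ forces $[X]=[X']$; and it is surjective by the independence of genus and Steinitz class (\cref{t-main-structure}): for $g\in\pcg R$ pick a finitely generated projective $X$ with $\Psi(X)=\Psi(P)$ and $\stzcls X=\stzcls P+g$, so that $[X]-[P]=g$. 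Thus $\beta_P$ is an isomorphism. A comparison of Steinitz classes shows that the isomorphism $\gcg P\to\gcg Q$ of the lemma sends $[X]$ to the unique $[Y]$ with $\beta_Q([Y])=\beta_P([X])$, that is, it equals $\beta_Q^{-1}\circ\beta_P$. Hence it suffices to prove that $\beta_P(\gcgm P)=\pcgm R$ for every progenerator $P$: applying this to $P$ and to $Q$, the image of $\gcgm P$ under $\gcg P\to\gcg Q$ is $\beta_Q^{-1}(\beta_P(\gcgm P))=\beta_Q^{-1}(\pcgm R)=\gcgm Q$.

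For the inclusion $\beta_P(\gcgm P)\subseteq\pcgm R$: a class in $\gcgm P$ contains a submodule $M\subset P$ with $(P/M)=(T)$ for some tower $T$, and then $\beta_P([M])=[M]-[P]=\Phi_0((P/M))=\Phi_0((T))$ by \cref{l-ex-phi0}. Recalling from \cref{t-k0-iso-pcg} that $\phi$ is induced by $\Phi_0$, we have $\Phi_0((T))=\phi(\langle T\rangle)$, and this lies in $\phi(\cgrpm R)=\pcgm R$ by \cref{d-pcgm}. For the reverse inclusion, take $g\in\pcgm R$; by \cref{d-pcgm}, $g=\phi(\langle T\rangle)=\Phi_0((T))$ for some tower $T$. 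I claim that $P$ has a submodule $M$ with $(P/M)=(T)$; granting this, $\beta_P([M])=[M]-[P]=\Phi_0((T))=g$ by \cref{l-ex-phi0}, while $\Psi(M)=\Psi(P)$ by \subref{p-f:gen}, so $[M]\in\gcgm P$ and $g\in\beta_P(\gcgm P)$.

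It remains to prove the claim: every tower $T$ is realized as $(P/M)$ for some submodule $M$ of the progenerator $P$. Let $V$ be the top of $T$. If $T$ is a cycle tower, $V$ is unfaithful; since $P$ is a generator, $R$ is a direct summand of $P^n$ for some $n\in\bN$, and additivity of the genus gives $0<\rho(R,V)\le\rho(P^n,V)=n\,\rho(P,V)$, so $\rho(P,V)\ne 0$. If $T$ is a faithful tower, $V$ is the faithful simple module at its top, and the hypothesis of \cref{l-ex-uniserial} is satisfied because $P\ne 0$ --- exactly as in the applications of \cref{l-ex-uniserial} in the proofs of \cref{l-k0-cycgen} and \cref{p-krull}, where the corresponding statement is invoked for $R$. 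In either case \cref{l-ex-uniserial} provides $M\subset P$ with $P/M$ uniserial whose composition series, from top to bottom, consists precisely of the modules of $T$; hence $(P/M)=(T)$ in $K_0\modfl(R)$, as required. The delicate point is this last claim, and within it the faithful-tower case, where one must ensure that $P$ admits the relevant uniserial quotient; everything else is bookkeeping around the isomorphisms $\gcg P\cong\pcg R$.
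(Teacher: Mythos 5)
Your proof is correct and, once the bookkeeping identification $\gcg{P}\cong\pcg{R}$ via $\beta_P$ is stripped away, it is essentially the paper's argument: both hinge on applying $\Phi_0$ (equivalently, comparing Steinitz classes) to an equality $(P/M)=(T)$ and on using \cref{l-ex-uniserial} together with the progenerator hypothesis to realize an arbitrary tower as $(Q/N)$, the paper simply proving $\alpha(\gcgm{P})\subset\gcgm{Q}$ directly and quoting symmetry instead of passing through $\pcgm{R}$. Concerning your flagged ``delicate point'': since $P$ is a progenerator, $R$ is an epimorphic image of some $P^n$, so every simple module (faithful or not) is an epimorphic image of $P$; this settles the faithful-tower case uniformly and is exactly what the paper's own invocation of \cref{l-ex-uniserial} for the progenerator $Q$ relies on, so no appeal to ``$P\ne 0$'' alone is needed.
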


\begin{proof}
  Let $\alpha \colon \gcg P \to \gcg Q$ denote the isomorphism as above.
  It suffices to show $\alpha(\gcgm P) \subset \gcgm Q$.
  The other inclusion follows by symmetry.
  Let $M$ be a submodule of $P$ such that $(P/M)=(T)$ for some tower $T$.
  Since $Q$ is a progenerator, \cref{l-ex-uniserial} implies that that there exists $N \subset Q$ such that $(Q/N)=(T)$.
  Thus, $[N] \in \gcgm Q$.
  Applying $\Phi_0$ to $(P/M)=(Q/N)$, we find in particular $\stzcls{M} - \stzcls{P} = \stzcls{N} - \stzcls{Q}$, and hence $\alpha([M])=[N]$.
\end{proof}

\begin{thm} \label{t-morita}
  Let $R$ and $S$ be Morita equivalent HNP rings.
  Then $\pcg R \cong \pcg S$, and under this isomorphism, the image of $\pcgm R$ is $\pcgm S$.
\end{thm}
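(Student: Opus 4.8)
The plan is to transport the entire structure through a Morita equivalence and then chain together the isomorphisms of genus class groups recorded above; \cref{l-gcg-progen} is precisely what controls the behaviour of the distinguished subsets. First I would fix an equivalence $F$ from the category of right $R$-modules to the category of right $S$-modules, a quasi-inverse $G$, and set $Q = G(S)$. Then $Q$ is a progenerator of $R$, and $F(Q) \cong S$ as right $S$-modules. Restricting to finitely generated projective modules, $F$ induces an isomorphism $F_* \colon K_0(R) \to K_0(S)$, $[M] \mapsto [F(M)]$; restricting to the abelian categories $\modfl(R)$ and $\modfl(S)$, it induces an isomorphism $K_0\modfl(R) \to K_0\modfl(S)$, $(M) \mapsto (F(M))$, which on the canonical bases is the bijection $\simple{R} \to \simple{S}$ induced by $F$. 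Since $F$ preserves $\Ext^1$, and since it preserves the lattice of two-sided ideals and the annihilator of each module, this bijection preserves the successor relation and sends faithful simple modules to faithful ones; hence it restricts to a bijection $\modspec(R) \to \modspec(S)$ and carries towers to towers, faithful towers to faithful towers, and cycle towers to cycle towers.

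Next I would verify that $F$ preserves the genus. For a finitely generated projective $P$ and an unfaithful simple module $W$ with maximal ideal $M_W = \ann_R(W)$ one has $P/PM_W \cong W^{(\rho(P,W))}$ as a semisimple $R/M_W$-module, so every homomorphism $P \to W$ factors through $P/PM_W$ and $\rho(P,W) = \dim_{\End_R(W)} \Hom_R(P,W)$; an equivalence preserves this quantity, so $\rho_S(F(P),F(W)) = \rho_R(P,W)$. Furthermore $\udim_S(F(P)) = \udim_R(P)$, since the uniform dimension of a module is an invariant of its submodule lattice, which $F$ preserves. Hence $F_*$ intertwines $\Psi^+_R$ and $\Psi^+_S$ via the bijection $\modspec(R) \to \modspec(S)$, so in particular $F_*(\pcg R) = \pcg S$; and for every nonzero finitely generated projective $P$, the map $F_*$ restricts to a bijection $\gcg P \to \gcg{F(P)}$, which is an isomorphism $(\gcg P,\boxplus_P) \to (\gcg{F(P)},\boxplus_{F(P)})$ because $F_*$ is a group homomorphism and $[X]\boxplus_P[Y] = [X]+[Y]-[P]$. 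Finally, if $M \subset P$ is a submodule with $(P/M) = (T)$ for a tower $T$, then $F(M) \subset F(P)$ with $(F(P)/F(M)) = (F(T))$ and $F(T)$ a tower of $S$; since $F$ is bijective on submodules of $P$ and every tower of $S$ is of the form $F(T)$, it follows that $F_*$ maps $\gcgm P$ onto $\gcgm{F(P)}$.

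It then remains to assemble the chain
\[
\pcg R \;\cong\; \gcg R \;\cong\; \gcg Q \;\cong\; \gcg{F(Q)} \;=\; \gcg S \;\cong\; \pcg S ,
\]
where the first and last isomorphisms are the ones recalled earlier (carrying $\pcgm R$ to $\gcgm R$ and $\gcgm S$ to $\pcgm S$), the second is the canonical isomorphism between the genus class groups of the progenerators $R$ and $Q$, which by \cref{l-gcg-progen} maps $\gcgm R$ onto $\gcgm Q$, and the third is the restriction of $F_*$, which maps $\gcgm Q$ onto $\gcgm{F(Q)} = \gcgm S$. The composite is an isomorphism $\pcg R \cong \pcg S$ sending $\pcgm R$ onto $\pcgm S$, as claimed.

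The hard part is not the group-level statement $K_0(R) \cong K_0(S)$, which is classical, but the Morita-invariance of the genus and of the tower combinatorics, which is what makes the distinguished subsets correspond. Within that, the two points needing care are the identity $\rho_S(F(P),F(W)) = \rho_R(P,W)$ (obtained from the $\Hom$-description of $\rho$) and the equality $\udim_S(F(P)) = \udim_R(P)$ (from the submodule lattice); once these are in place, the rest is bookkeeping with isomorphisms already available in this section.
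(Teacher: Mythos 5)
Your proof is correct and follows essentially the same route as the paper: transport genus, uniform dimension, and the tower structure through the Morita equivalence (the paper cites Levy--Robson for the tower part and characterizes $\rho(X,V)$ via epimorphisms $X\to V^{(t)}$ rather than via $\Hom$-dimensions, but these are cosmetic differences), and then compose the induced isomorphism of genus class groups with the comparison isomorphism of \cref{l-gcg-progen} to pass between $\gcg R$, the genus class group of the progenerator corresponding to $S$, and $\gcg S\cong\pcg S$, with the distinguished subsets tracked at each step.
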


\begin{proof}
  Since $R$ and $S$ are Morita equivalent, there exists a right $R$-module $P_R$ such that $P$ is a progenerator and $\End(P_R)=S$.
  Let ${}_R Q_S = \Hom(P_R,R_R)$.
  The category equivalence from right $R$-modules to right $S$-modules is given by the functor $- \otimes_R Q$.
  The induced bijection between simple $R$-modules and simple $S$-modules preserves the tower structure; that is, it preserves the towers, their lengths, and their (cycle or faithful) types (see \cite[Theorem 19.4]{levy-robson11}).
  
  Let $V$ be an unfaithful simple right $R$-module and let $X$ be a finitely generated projective right $R$-module.
  Then $\rho(X,V)$ is the largest $t \in \bN_0$ such that there exists an epimorphism $X \to V^{(t)}$.
  It follows that $\rho(X,V) = \rho(X \otimes_R Q, V \otimes_R Q)$.
  Since also $\udim_R(X) = \udim_S (X \otimes_R Q)$, we have $\Psi_R(X) = \Psi_S(X \otimes_R Q)$.

  Thus, since the Morita equivalence induces an isomorphism of the lattice of right $R$-submodules of $P$ and the lattice of right ideals of $S$, there exists an isomorphism of abelian groups $\operatorname{gcg}_R(P) \to \operatorname{gcg}_S(S)$ satisfying $[X] \mapsto [X \otimes_R Q]$ and this isomorphism maps $\gcgm{P}$ to $\gcgm{S}$.
  By the previous lemma, $\operatorname{gcg}_R(P) \cong \operatorname{gcg}_R(R) \cong \pcg{R}$ and $\operatorname{gcg}_S(S) \cong \pcg{S}$, and the composed isomorphism $\pcg{R} \to \pcg{S}$ maps $\pcgm{R}$ to $\pcgm{S}$.
\end{proof}

If $S$ is a Dedekind right closure of $R$, we denote by $\tau\colon K_0(R) \to K_0(S)$ the homomorphism satisfying $\tau([X])=[X\otimes_R S]$.
This homomorphism restricts to an isomorphism $\pcg{R} \to \pcg{S}$, which we also denote by $\tau$.

\begin{thm} \label{t-closure}
  Let $R$ be an HNP ring and let $S$ be a Dedekind right closure of $R$.
  Then there exists an isomorphism $\tau\colon\pcg{R} \to \pcg{S}$ satisfying $[X] \mapsto [X\otimes_R S]$.
  This isomorphism maps $\pcgm{R}$ to $\pcgm{S}$.
\end{thm}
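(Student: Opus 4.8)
Since $\tau$ is already known to restrict to an isomorphism $\pcg R \to \pcg S$, the only thing left to prove is that $\tau(\pcgm R) = \pcgm S$. The plan is to express both sides through tower-maximal one-sided ideals, to compute the action of $\tau$ on the relevant $K_0$-classes via flatness, and to match the two sides using the module theory of the Dedekind right closure from \cite{levy-robson11}.

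First I would record the descriptions of the two sets. By \cref{d-pcgm}, $\pcgm R = \{\, [I]-[R] \mid I \text{ a tower-maximal right } R\text{-ideal}\,\}$; since $S$ is a Dedekind prime ring all its towers are trivial, so a right $S$-ideal $N$ is tower-maximal exactly when $S/N$ is simple, and hence $\pcgm S = \{\, [N]-[S] \mid N \text{ a right } S\text{-ideal with } S/N \text{ simple}\,\}$. Next, for each tower $T$ of $R$, \cref{l-ex-uniserial} (with $P = R$) produces a right $R$-ideal $I_T$ with $R/I_T$ uniserial whose composition factors, from top to bottom, are exactly the modules of $T$; then $(R/I_T) = (T)$, so $I_T$ is tower-maximal and lies in the principal genus by \subref{p-f:gen}, and \cref{t-k0-iso-pcg} gives $[I_T]-[R] = \phi(\langle R/I_T\rangle) = \phi(\langle T\rangle)$. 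Because $\cgrpm R = \{\langle T\rangle \mid T \in \towers R\}$ and $\pcgm R = \phi(\cgrpm R)$, this yields $\pcgm R = \{\, [I_T]-[R] \mid T \in \towers R\,\}$.

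Second I would compute $\tau$ on these classes. A right $R$-ideal $I$ is projective, hence flat, so inside $\quo(R)$ the multiplication map $I\otimes_R S \to IS$ is an isomorphism of right $S$-modules; applying $-\otimes_R S$ to $0\to I\to R\to R/I\to 0$ then produces a short exact sequence $0\to IS\to S\to (R/I)\otimes_R S\to 0$, and $IS$ is again a right $S$-ideal (it contains a non-zero-divisor of $I$, and $S$ is hereditary Noetherian), hence finitely generated projective. Thus $\tau([I]-[R]) = [IS]-[S]$ and $S/IS\cong (R/I)\otimes_R S$ for every right $R$-ideal $I$.

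The decisive input is the structure of the Dedekind right closure, which I would take from \cite{levy-robson11}: restriction of scalars along $R\subseteq S$ induces a bijection between the isomorphism classes of simple right $S$-modules and the towers of $R$ such that, for every tower $T$ and every uniserial right $R$-module $R/I_T$ with composition factors the modules of $T$, the right $S$-module $S/I_TS = (R/I_T)\otimes_R S$ is the simple $S$-module $W_T$ corresponding to $T$. Granting this, for each tower $T$ the ideal $I_TS$ is a right $S$-ideal with simple quotient $W_T$, so $\tau([I_T]-[R]) = [I_TS]-[S]\in\pcgm S$, giving $\tau(\pcgm R)\subseteq\pcgm S$. Conversely, if $N$ is a right $S$-ideal with $S/N$ simple, then by surjectivity of $T\mapsto W_T$ there is a tower $T$ with $S/N\cong W_T\cong S/I_TS$, and Schanuel's Lemma applied to $0\to N\to S\to S/N\to 0$ and $0\to I_TS\to S\to S/I_TS\to 0$ gives $N\oplus S\cong I_TS\oplus S$, so $[N]-[S] = [I_TS]-[S] = \tau([I_T]-[R])\in\tau(\pcgm R)$; hence $\pcgm S\subseteq\tau(\pcgm R)$, and the two inclusions give equality. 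The step I expect to be the main obstacle is isolating exactly the right statement about the Dedekind right closure — that restriction realizes the tower-to-simple correspondence and that the uniserial modules attached to towers tensor up to the corresponding simple $S$-modules — after which the rest is bookkeeping with Schanuel's Lemma, flatness, and the identifications established in \cref{sec-class-groups}.
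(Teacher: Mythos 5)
Your proposal is correct and follows essentially the same route as the paper's proof: both compute $\tau([I]-[R])=[IS]-[S]$ with $S/IS\cong (R/I)\otimes_R S$, rely on the Levy--Robson properties of the Dedekind right closure (in each tower exactly one simple module survives $-\otimes_R S$ and yields a simple $S$-module, and every simple $S$-module arises this way, \cite[Theorem 13.12(b)]{levy-robson11}), and settle the reverse inclusion with Schanuel's Lemma. The only cosmetic differences are that you first replace an arbitrary tower-maximal ideal by the uniserial representative $I_T$ via the identification $\pcgm{R}=\phi(\cgrpm{R})$, where the paper argues directly, and that your ``restriction of scalars'' should read extension of scalars $V\mapsto V\otimes_R S$ -- the facts you actually invoke are the correct ones.
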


\begin{proof}
  The isomorphism $\tau\colon\pcg R \to \pcg S$ is established in \cite[Theorem 35.19]{levy-robson11}.

  We first show $\tau(\pcgm{R}) \subset \pcgm{S}$.
  Let $g \in \pcgm{R}$.
  Then $g = [I] - [R]$ for some tower-maximal right $R$-ideal $I$.
  Thus $(R/I)=(T)$ for some tower $T$.
  Suppose that $T$ consists of pairwise distinct isomorphism classes of simple modules $(V_1)$, $\ldots\,$,~$(V_n)$ such that $(V_{i+1})$ is the unfaithful successor of $(V_i)$ for all $i \in [1,n-1]$.
  Since $S$ is a Dedekind right closure of $R$, there exists a unique $j \in [1,n]$ such that $V_j \otimes_R S$ is a simple module, and for all $i \ne j$ we have $V_i \otimes_R S = \mathbf 0$.
  Thus, $S/IS\cong (R/I) \otimes_R S$ is simple, and $IS$ is a maximal right $S$-ideal.
  Hence $\tau(g) = \tau([I]-[R]) = \tau([I]) - \tau([R]) = [IS]-[S] \in \pcgm{S}$.

  We now show $\pcgm{S} \subset \tau(\pcgm{R})$.
  Let $g \in \pcgm{S}$.
  Then there exists a maximal right $S$-ideal $J$ such that $g = [J] - [S]$.
  By \cite[Theorem 13.12(b)]{levy-robson11} there exists a simple $R$-module $V$ such that $V \otimes_R S \cong S/J$.
  Let $T$ be the $R$-tower containing $V$.
  By \cref{l-ex-uniserial}, there exists a right $R$-ideal $I$ such that $R/I$ is uniserial and $(R/I) =(T)$.
  Hence $[I]-[R] \in \pcgm R$.
  Since $S$ is a Dedekind right closure of $R$, it follows that $S/(I\otimes_R S) \cong (R/I) \otimes_R S \cong S/J$.
  Hence, by Schanuel's Lemma, $[I \otimes_R S] = [J]$ in $K_0(S)$.
  Thus $\tau([I]-[R]) = \tau([I]) - \tau([R]) = [J] - [S] = g$.
\end{proof}

\section{Transfer homomorphism}
\label{sec-transfer-hom}

In this section, we show that if $R$ is a bounded HNP ring and every stably free right $R$-ideal is free, then there exists a transfer homomorphism from $R^\bullet$ to the monoid of zero-sum sequences over the subset $\cgrpm{R}$ of the class group $\cgrp{R}$.
Moreover, the catenary degree in the fiber of this transfer homomorphism is at most $2$.
The transfer homomorphism is established in \cref{thm-transfer}; its catenary degree in the fibers in \cref{thm-catenary}.

We work in a somewhat more general setting than that of bounded HNP rings.
Thus, in this section, we assume that the (non-Artinian) HNP ring $R$ has the following two additional properties.
\begin{enumerate}[label=\textbf{(F\arabic*)},ref=\textup{(F\arabic*\textup)},leftmargin=*]
\item\label{c-ft} Every faithful tower is trivial.
\item\label{c-fo} If $V$ and $W$ are simple modules that belong to towers belonging to different classes in $\cgrp{R}$, then $\Ext^1_R(V,W) = \mathbf 0$.
\end{enumerate}
Sometimes we will also require the following, stronger, replacement for~\labelcref{c-fo}:
\begin{enumerate}[label=\textbf{(F\arabic*s)},ref=\textup{(F\arabic*s)},leftmargin=*]
\setcounter{enumi}{1}
\item\label{c-fos} If $V$ and $W$ are simple modules that belong to different towers, then $\Ext^1_R(V,W) = \mathbf 0$.
\end{enumerate}

If $R$ is bounded or a Dedekind prime ring, then it trivially has property~\labelcref{c-ft}.
Consider property~\labelcref{c-fos}.
If $W$ is unfaithful, then $\Ext^1_R(V,W) \ne \mathbf 0$ implies that $W$ is an unfaithful successor to $V$, which is unique up to isomorphism.
In particular, in this case $V$ and $W$ belong to the same tower.
Thus, the condition in property~\labelcref{c-fos} is trivially satisfied whenever $W$ is unfaithful.
In particular, every bounded HNP ring has property~\labelcref{c-fos}.
If $\cgrp{R}=\mathbf 0$, then $R$ trivially has property~\labelcref{c-fo}, but not necessarily property~\labelcref{c-fos}.

In the presence of property \labelcref{c-ft}, every unfaithful simple module has a unique successor, which is again unfaithful.
Thus, for $V$ and $W$ belonging to different towers, $\Ext^1_R(V,W) = \mathbf 0$ unless both $V$ and $W$ are faithful.
Hence, in this case, \labelcref{c-fo}, respectively \labelcref{c-fos}, is equivalent to the same property where $V$ and $W$ are both faithful.

If $R$ is a prime principal ideal ring, then it is a Dedekind prime ring with $\cgrp{R}\cong \pcg{R} =\mathbf 0$ and every stably free right $R$-ideal is free.
In summary, sufficient conditions for $R$ to have properties \labelcref{c-ft,c-fo} are:
\begin{propenumerate}
  \item $R$ is a prime principal ideal ring, or
  \item $R$ is a bounded HNP ring.
\end{propenumerate}
In the latter case $R$ even has property \labelcref{c-fos}.

We begin with some lemmas that are mostly consequences of \cref{l-int-mod}.
\begin{lemma} \label{int}
  Let $J \subset I$ be right $R$-ideals and suppose that there is a tower, say $T$, such that $(I/J) = (T) + (X)$ with $(X) \in \famon{\simple{R}}$.
  \begin{enumerate}
    \item\label{int:lower} There exists a right $R$-ideal $K$ with $J \subset K \subset I$ such that $(K/J)=(T')$ for a tower $T'$ with $\langle T' \rangle = \langle T\rangle$.
      Moreover, $T$ and $T'$ are of the same type \textup{(}faithful or cycle tower\textup{)}.
      If, in addition, $T$ is a cycle tower or \labelcref{c-fos} holds, then it is possible to choose $K$ such that $(K/J)=(T)$.
  \end{enumerate}
  \vspace{-\topsep}
  \begin{enumerate}[topsep=0pt,label=\textup{(\arabic*')}]
    \item\label{int:upper} There exists a right $R$-ideal $L$ with $J \subset L \subset I$ such that $(I/L)=(T')$ for a tower $T'$ with  $\langle T' \rangle = \langle T\rangle$.
      Moreover, $T$ and $T'$ are of the same type.
      If, in addition, $T$ is a cycle tower or \labelcref{c-fos} holds, then it is possible to choose $L$ such that $(I/L)=(T)$.
  \end{enumerate}
\end{lemma}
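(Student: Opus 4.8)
The plan is to recast both \ref{int:lower} and \ref{int:upper} as statements about the finite-length module $M := I/J$, which satisfies $(M) = (T) + (X)$ with $(X) \in \famon{\simple{R}}$. Submodules $K$ with $J \subseteq K \subseteq I$ correspond bijectively to submodules of $M$, and modules $L$ with $J \subseteq L \subseteq I$ to quotients of $M$; any such $K$ or $L$ contains the non-zero-divisor lying in $J$ and hence is a right $R$-ideal. So it suffices to produce a submodule (for \ref{int:lower}), respectively a quotient (for \ref{int:upper}), of $M$ whose class in $K_0\modfl(R)$ is $(T')$ for a suitable tower $T'$. I would then split according to the type of $T$.

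If $T$ is a cycle tower, \cref{l-int-mod} applies directly to $M$: \ref{l-int-mod:lower} gives a submodule $K/J$ of $M$ with $(K/J) = (T)$, and \ref{l-int-mod:upper} a quotient $M/(L/J) \cong I/L$ with $(I/L) = (T)$. Taking $T' = T$ proves both parts, including the strengthened clause, since $T$ is already a cycle tower; this is the part that is ``mostly a consequence of \cref{l-int-mod}''.

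The substance lies in the case where $T$ is a faithful tower, hence trivial by \labelcref{c-ft}, say $(T) = (V)$ for a faithful simple $V$; comparing coefficients in $(M) = (V) + (X)$ shows that $V$ is a composition factor of $M$. Here I would use the canonical decomposition $M = F \oplus C$ of \cite[Theorem~41.1]{levy-robson11}: under \labelcref{c-ft} every composition factor of $F$ is a faithful simple, so $V$ is a composition factor of $F$ and $F \neq \mathbf 0$. Assuming \labelcref{c-fo}, applying it to the trivial faithful towers $(V_1),(V_2)$ gives $\Ext^1_R(V_1,V_2) = \mathbf 0$ whenever $V_1,V_2$ are faithful simples with $\langle V_1\rangle \neq \langle V_2\rangle$ in $\cgrp{R}$, so the standard decomposition of a finite-length module into blocks with no extensions between distinct blocks yields $F = \bigoplus_c F_c$ where every composition factor of $F_c$ is a faithful simple of class $c$. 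Setting $c := \langle T\rangle = \langle V\rangle$, the block $F_c$ is nonzero; a simple submodule $K/J$ of $\soc(F_c) \subseteq M$ satisfies $(K/J) = (V')$ for a faithful simple $V'$ with $\langle V'\rangle = c$, so that $T' := (V')$ is a trivial faithful tower, of the same type as $T$, with $(T') = (V')$ and $\langle T'\rangle = \langle T\rangle$; this proves \ref{int:lower}, and dually a simple quotient of $F_c$, pulled back along $M \twoheadrightarrow F \twoheadrightarrow F_c$, gives the $L$ for \ref{int:upper}. Assuming the stronger \labelcref{c-fos}, one has $\Ext^1_R(V,W) = \Ext^1_R(W,V) = \mathbf 0$ for every simple $W$ not isomorphic to $V$: for faithful $W$ this is \labelcref{c-fos}, and for unfaithful $W$ it is automatic from \labelcref{c-ft}, since $V$ has no unfaithful successor and is the successor of no unfaithful simple. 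Hence $M = M_V \oplus M''$ with every composition factor of $M_V$ isomorphic to $V$ and $M_V \neq \mathbf 0$; a simple submodule of $\soc(M_V)$, respectively a simple quotient of $M_V$, then has class exactly $(V) = (T)$, which is the refined conclusion.

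The routine ingredients are the correspondence between submodules of $M$ and intermediate ideals, and the block decomposition of a finite-length module along a partition of its composition factors admitting no extensions across distinct blocks. The main obstacle is the faithful-tower case: one must verify that \labelcref{c-fo} forces the faithful part of $M$ to split along the classes of $\cgrp{R}$, so that the tower $T'$ produced lies in the prescribed class, and that \labelcref{c-fos}, together with \labelcref{c-ft}, allows $V$ itself to be peeled off as a direct summand of $M$; the role of \labelcref{c-ft} here is precisely to exclude extensions of $V$ by unfaithful simples.
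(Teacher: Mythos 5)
Your argument is correct and follows essentially the paper's route: the cycle-tower case is handled exactly as in the paper via \cref{l-int-mod}, and the faithful case likewise starts from the decomposition $I/J \cong F \oplus C$ (all composition factors of $F$ faithful, by \labelcref{c-ft}) and exploits the Ext-vanishing furnished by \labelcref{c-fo}, respectively \labelcref{c-fos}. The only minor difference is the last step: the paper rearranges a composition series of $F$, repeatedly splitting extensions between adjacent factors in different classes so as to push a factor of class $\langle T\rangle$ into the socle (and argues dually for \labelcref{int:upper}), whereas you invoke the standard block decomposition of $F$ along $\cgrp{R}$-classes (respectively of $M$ along $(V)$ versus the remaining simples, where your auxiliary observation that \labelcref{c-ft} rules out nonzero $\Ext^1$ between $V$ and unfaithful simples is correct and is also recorded in the paper's discussion of \labelcref{c-ft,c-fos}) and then take a simple submodule or simple quotient of the appropriate nonzero block.
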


\begin{proof}
  We show \ref*{int:lower}; the proof of \ref*{int:upper} is analogous.
  Suppose first that $T$ is a cycle tower.
  By \subref{l-int-mod:lower}, there exists a submodule $M$ of $I/J$ such that $(M)=(T)$.
  Lifting $M$ to a right $R$-ideal $J \subset K \subset I$ under the canonical epimorphism $I \to I/J$, we have $(K/J)=(T)$, and we are done.

  Suppose $T$ is a faithful tower.
  All faithful towers are trivial by \labelcref{c-ft}.
  Thus, \cite[Corollary 41.4]{levy-robson11} implies $I/J \cong F \oplus C$, where $F$ is a module all of whose composition factors are faithful, and $C$ is a module all of whose composition factors are unfaithful.
  Let $F=F_0 \supsetneq F_1 \supsetneq F_2 \supsetneq \cdots \supsetneq F_n=\mathbf 0$ be a composition series of $F$.
  The tower $T$ consists of a single isomorphism class $(V)$ of a faithful simple module $V$.
  Let $i \in [1,n]$ be maximal with $\langle F_{i-1}/F_i \rangle = \langle V \rangle$.
  If $i=n$ then $\langle F_{n-1} \rangle = \langle V \rangle$, and we are done.
  If $i <n$, then $\Ext^1_R(F_{i-1}/F_i, F_i/F_{i+1}) = \mathbf 0$ by \labelcref{c-fo}.
  Hence $\mathbf 0 \to F_i/F_{i+1} \to F_{i-1}/F_{i+1} \to F_{i-1}/F_i \to \mathbf 0$ splits, and we may change the composition series so that $\langle F_i/F_{i+1} \rangle = \langle V \rangle$.
  Inductively, we again achieve $\langle F_{n-1} \rangle = \langle V \rangle$.

  In case \labelcref{c-fos} holds, we argue analogously, but we consider isomorphism classes of simple modules instead of classes in $\cgrp{R}$.
  In this way we achieve $(F_{n-1})=(V)$.
\end{proof}

\begin{lemma} \label{l-towerlift}
  Let $J \subset I$ be right $R$-ideals, and suppose that $(I/J) = (T_1) + \cdots + (T_n) + (X)$ with towers $T_1$, $\ldots\,$,~$T_n$ and $(X) \in \famon{\simple{R}}$.

  \begin{enumerate}[series=tl-basic]
    \item\label{l-towerlift:weak}
      There exists a chain of right $R$-ideals $I \supset I_0 \supset I_1 \supset \cdots \supset I_{n-1} \supset I_n=J$
      such that $(I_{i-1}/I_i)$ consists of a single tower and $\langle I_{i-1}/I_i \rangle = \langle T_i \rangle$ for all $i \in [1,n]$.
  \end{enumerate}
  \vspace{-\topsep}
  \begin{enumerate}[topsep=0pt,series=tl-basic-prime,label=\textup{(\arabic*')}]
    \item\label{l-towerlift:weak-upper}
      There exists a chain of right $R$-ideals $I = I_0 \supset I_1 \supset \cdots \supset I_{n-1} \supset I_n \supset J$
      such that $(I_{i-1}/I_i)$ consists of a single tower and $\langle I_{i-1}/I_i \rangle = \langle T_i \rangle$ for all $i \in [1,n]$.
  \end{enumerate}
  \begin{enumerate}[topsep=0pt,tl-basic]
    \item\label{l-towerlift:strong}
      Suppose that $T_1$, $\ldots\,$,~$T_n$ are cycle towers or \labelcref{c-fos} holds.
      Then there exists a chain of right $R$-ideals $I \supset I_0 \supset I_1 \supset \cdots \supset I_{n-1} \supset I_n=J$ such that $(I_{i-1}/I_i) =(T_i)$ for all $i \in [1,n]$.
  \end{enumerate}
  \begin{enumerate}[topsep=0pt,tl-basic-prime]
    \item\label{l-towerlift:strong-upper}
      Suppose that $T_1$, $\ldots\,$,~$T_n$ are cycle towers or \labelcref{c-fos} holds.
      Then there exists a chain of right $R$-ideals $I = I_0 \supset I_1 \supset \cdots \supset I_{n-1} \supset I_n \supset J$ such that $(I_{i-1}/I_i) =(T_i)$ for all $i \in [1,n]$.
  \end{enumerate}
  \begin{enumerate}[topsep=0pt,tl-basic]
    \item\label{l-towerlift:full}
      If $(X)=\mathbf 0$, then there exists a chain of right $R$-ideals $I = I_0 \supset I_1 \supset \cdots \supset I_{n-1} \supset I_n=J$ and a permutation $\sigma \in \fS_n$ such that $(I_{i-1}/I_i) = (T_{\sigma(i)})$ and $\langle I_{i-1}/I_i \rangle = \langle T_i \rangle$ for all $i \in [1,n]$.
  \end{enumerate}
\end{lemma}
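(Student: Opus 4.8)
The plan is to prove the four chain‑existence statements \subref{l-towerlift:weak}--\subref{l-towerlift:strong-upper} by induction on $n$, and then to deduce the refined statement \subref{l-towerlift:full} from \subref{l-towerlift:weak-upper}. The case $n=0$ is trivial (take $I_0=J$). For the inductive step of \subref{l-towerlift:weak} I would peel a tower off near $J$: by \subref{int:lower}, applied with the tower $T_n$ and with $(T_1)+\cdots+(T_{n-1})+(X)$ in the role of $(X)$, there is a right $R$-ideal $I_{n-1}$ with $J\subset I_{n-1}\subset I$ such that $(I_{n-1}/J)=(T_n')$ for a single tower $T_n'$ of the same type as $T_n$ with $\langle T_n'\rangle=\langle T_n\rangle$. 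The remaining task is to rewrite $(I/I_{n-1})=(I/J)-(T_n')$ in the form $(S_1)+\cdots+(S_{n-1})+(X'')$ with towers $S_i$ satisfying $\langle S_i\rangle=\langle T_i\rangle$ and $(X'')\in\famon{\simple{R}}$; then the induction hypothesis, applied to $I_{n-1}\subset I$, yields a chain $I\supset I_0\supset\cdots\supset I_{n-1}$ with the required tower quotients, and appending $I_n=J$ completes the step.

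Performing this rewriting is the only delicate point, and I expect it to be the main obstacle. If $T_n$ is a cycle tower, \subref{int:lower} gives $T_n'=T_n$ exactly and one simply takes $S_i=T_i$ and $(X'')=(X)$. If $T_n$ is faithful, it is trivial by \labelcref{c-ft}, so $T_n=(V)$ and $T_n'=(V')$ with $V$, $V'$ faithful simple modules lying in the same class of $\cgrp{R}$. The key observation is that a faithful simple module is a composition factor of a tower only if that tower is the trivial faithful tower on it; since $V'$ is a composition factor of $I/J$, either it occurs in $(X)$, in which case one transfers one copy of it into $(X'')$ and keeps $S_i=T_i$, or $T_j=(V')$ for some index $j$, in which case $\langle T_j\rangle=\langle T_n'\rangle=\langle T_n\rangle$ and one replaces $T_j$ by $T_n$ in the list (so the new list still has classes $\langle T_1\rangle,\ldots,\langle T_{n-1}\rangle$) and takes $(X'')=(X)$. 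This re‑indexing is exactly why the induction must be carried out for an arbitrary ordered list of towers. Statement \subref{l-towerlift:weak-upper} is proved identically, peeling near $I$ via \subref{int:upper}. For \subref{l-towerlift:strong} and \subref{l-towerlift:strong-upper}, the extra hypothesis (all $T_i$ cycle towers, or \labelcref{c-fos}) allows \cref{int} to peel off $T_n$, respectively $T_1$, exactly, so no re‑indexing occurs and one gets $(I_{i-1}/I_i)=(T_i)$ on the nose.

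Finally, for \subref{l-towerlift:full}, suppose $(X)=\mathbf 0$. Applying \subref{l-towerlift:weak-upper} to $(I/J)=(T_1)+\cdots+(T_n)$ produces a chain $I=I_0\supset I_1\supset\cdots\supset I_n\supset J$ with $(I_{i-1}/I_i)=(S_i)$ a single tower and $\langle S_i\rangle=\langle T_i\rangle$. Summing, $\sum_{i=1}^n(S_i)+(I_n/J)=\sum_{i=1}^n(T_i)$ in $K_0\modfl(R)$, so $(I_n/J)=\sum_i(T_i)-\sum_i(S_i)$ lies in $\fagrp{\towers{R}}$; being also the class of an actual module, it lies in $\fagrp{\towers{R}}\cap\famon{\simple{R}}=\famon{\towers{R}}$ and is therefore a sum of towers. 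Because the classes of towers are $\bZ$-linearly independent in $K_0\modfl(R)$, the multiset formed by $S_1,\ldots,S_n$ together with the towers of $I_n/J$ equals the multiset $\{T_1,\ldots,T_n\}$; comparing sizes forces $(I_n/J)=\mathbf 0$, hence $I_n=J$, and $\{S_1,\ldots,S_n\}=\{T_1,\ldots,T_n\}$ as multisets. Choosing $\sigma\in\fS_n$ with $S_i=T_{\sigma(i)}$ then gives $(I_{i-1}/I_i)=(T_{\sigma(i)})$ and $\langle I_{i-1}/I_i\rangle=\langle S_i\rangle=\langle T_i\rangle$, which is the assertion.
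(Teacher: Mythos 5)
Your proposal is correct and takes essentially the same route as the paper: parts \subref{l-towerlift:weak}--\subref{l-towerlift:strong-upper} by iterating \cref{int}, and part \subref{l-towerlift:full} by comparing the resulting tower decompositions via the $\bZ$-linear independence of the classes $(T)$, $T \in \towers{R}$ (you work from the $J$-end using \subref{l-towerlift:weak-upper}, the paper from the $I$-end using \subref{l-towerlift:weak}, which is an immaterial difference). Your careful handling of the re-indexing step when a faithful tower $T_n=(V)$ is replaced by $T_n'=(V')$ with $\langle V'\rangle=\langle V\rangle$ is exactly the detail the paper's phrase ``iterated application of the previous lemma'' leaves implicit, and your case analysis there is sound.
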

\begin{proof}
  \ref*{l-towerlift:weak}, \ref*{l-towerlift:weak-upper}, \ref*{l-towerlift:strong}, and \ref*{l-towerlift:strong-upper} follow by iterated application of the previous lemma.

  \ref*{l-towerlift:full} By \ref*{l-towerlift:weak}, there exists a chain of right $R$-ideals $I \supset I_0 \supset I_1 \supset \cdots \supset I_{n-1} \supset I_n=J$ and towers $T_1'$, $\ldots\,$,~$T_n'$ with $(I_{i-1}/I_i)=(T_i')$ and $\langle T_i'\rangle=\langle T_i\rangle$ for all $i \in [1,n]$.
  Since
  \[
  (I/J) = (T_1) + \cdots + (T_n) = (I/I_0) + (T_1') + \cdots + (T_n') \in \famon{\towers{R}},
  \]
  it follows that $I_0=I$ and that there exists a permutation $\sigma \in \fS_n$ with $T_i'=T_{\sigma(i)}$ for all $i \in [1,n]$.
\end{proof}
In the previous lemma, if the conditions of \labelcref{l-towerlift:strong} are satisfied, the statement of \labelcref{l-towerlift:full} holds with $\sigma=\id$ as a trivial consequence of \labelcref{l-towerlift:strong}.
However, we will need to make use of \labelcref{l-towerlift:full} even in the case where faithful towers may exist and \labelcref{c-fos} may not hold.

Recall, from \cref{t-k0-iso-pcg}, that there exists an isomorphism $\phi\colon \cgrp{R} \isomto \pcg{R}$.
If $T$ is a tower and $I$ is an $R$-ideal with $(R/I) = (T)$, then $\phi\big(\langle T \rangle\big) = \stzcls{I} - \stzcls{R} = [I]-[R]\in \pcgm{R} \subset \pcg{R}$,
with $\pcgm{R}$ as in \cref{d-pcgm}.

\begin{lemma} \label{l-free}
  Suppose that every stably free right $R$-ideal is free.
  If $J \subset I$ are right $R$-ideals such that $(I/J) = (T_1) + \cdots + (T_n)$ with towers $T_1$, $\ldots\,$,~$T_n$ and $\langle T_1 \rangle + \cdots + \langle T_n \rangle = \mathbf 0$ in $\cgrp{R}$, then $I$ is free if and only if $J$ is free.
\end{lemma}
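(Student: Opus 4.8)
The plan is to reduce the statement to the single identity $[I]=[J]$ in $K_0(R)$, after which the hypothesis on stably free ideals finishes the argument. First I would record the elementary bookkeeping: since $I$ and $J$ are right $R$-ideals they are essential in $R$ with $\udim I=\udim J=\udim R$, and, $R$ being hereditary Noetherian, both are finitely generated projective while $I/J$ has finite length. Hence \cref{l-ex-phi0} applies with $P=I$ and $Q=J$ and gives $\Phi_0\bigl((I/J)\bigr)=[J]-[I]$ in $K_0(R)$.

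Next I would translate the vanishing hypothesis in $\cgrp{R}$ into a statement about $\Phi_0$. By definition $\cgrp{R}=\fagrp{\towers{R}}/\qcp{R}$, so $\langle T_1\rangle+\cdots+\langle T_n\rangle=\mathbf 0$ is equivalent to $(T_1)+\cdots+(T_n)\in\qcp{R}$. By \cref{l-prinker} one has $\qcp{R}\subset\ker(\Phi_0)$, so applying $\Phi_0$ to the identity $(I/J)=(T_1)+\cdots+(T_n)$ in $K_0\modfl(R)$ yields
\[
[J]-[I]=\Phi_0\bigl((I/J)\bigr)=\sum_{i=1}^{n}\Phi_0\bigl((T_i)\bigr)=\Phi_0\Bigl(\textstyle\sum_{i=1}^{n}(T_i)\Bigr)=\mathbf 0,
\]
that is, $[I]=[J]$ in $K_0(R)$.

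Finally I would conclude as follows. If $I$ is free, then, since $\udim I=\udim R$, necessarily $I\cong R$, so $[I]=[R]$; hence $[J]=[R]$, which says that $J$ is stably free. As $J$ is a (nonzero) right $R$-ideal, the hypothesis that every stably free right $R$-ideal is free gives that $J$ is free. The converse direction is symmetric. I do not expect a real obstacle here: the one point to get right is the inclusion chain $\qcp{R}\subset\ker(\Phi_0)\subset\fagrp{\towers{R}}$ from \cref{l-prinker}, which is exactly what makes $\cgrp{R}$ a meaningful quotient and lets $\Phi_0$ annihilate $\qcp{R}$; granting that, the proof is a one-line computation in $K_0(R)$ together with the defining property of Hermite rings.
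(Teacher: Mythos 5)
Your proof is correct, and it takes a leaner route than the paper's. You work directly at the level of $K_0(R)$: from \cref{l-ex-phi0} you get $\Phi_0\bigl((I/J)\bigr)=[J]-[I]$, and from $\qcp{R}\subset\ker(\Phi_0)$ (\cref{l-prinker}) together with the hypothesis $(T_1)+\cdots+(T_n)\in\qcp{R}$ you conclude $[I]=[J]$, i.e.\ $I$ and $J$ are stably isomorphic; the observation that a free right $R$-ideal must be isomorphic to $R$ (since $\udim I=\udim R$) and the Hermite hypothesis then finish both directions. The paper reaches the same intermediate conclusion --- stable isomorphism of $I$ and $J$ --- but through the structure-theoretic invariants: $\Psi(I)=\Psi(J)$ via \subref{p-f:gen}, $\stzcls{I}=\stzcls{J}$ via the isomorphism $\phi$ of \cref{t-k0-iso-pcg}, and then \cref{t-main-structure} to convert equality of genus and Steinitz class into stable isomorphism. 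Your argument avoids \cref{t-k0-iso-pcg} and the Main Structure Theorem altogether, needing only the two elementary lemmas about $\Phi_0$, which is a genuine simplification; the paper's phrasing has the advantage of staying in the genus/Steinitz-class language that the rest of \cref{sec-transfer-hom} is built on. There is no gap in your reasoning --- the one point worth being explicit about, namely that $[J]=[R]$ in $K_0(R)$ means $J\oplus R^n\cong R^{n+1}$ for some $n$ and hence that $J$ is stably free in the paper's sense, is exactly the definition used there.
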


\begin{proof}
  By \subref{p-f:gen}, we have $\Psi(J)=\Psi(I)$.
  Since
  \[
  \stzcls{J} - \stzcls{I} = \phi\big(\langle R/J \rangle - \langle R/I\rangle \big) = \phi\big(\langle I / J \rangle\big) = \phi\big(\langle T_1\rangle + \cdots + \langle T_n\rangle\big) = \mathbf 0,
  \]
  we also have $\stzcls{J} = \stzcls{I}$.
  Thus $I$ and $J$ are stably isomorphic.
  Hence $I$ is stably free if and only if $J$ is stably free.
  By our assumption that all stably free right $R$-ideals are free, therefore $I$ is free if and only if $J$ is free.
\end{proof}

\begin{thm} \label{thm-transfer}
  Let $R$ be an HNP ring with properties \labelcref{c-ft,c-fo} and such that every stably free right $R$-ideal is free.
  \begin{enumerate}
  \item \label{thm-transfer:zss}
    There exists a transfer homomorphism $\overline\theta\colon R^\bullet \to \cB(\cgrpm{R})$ to the monoid of zero-sum sequences $\cB(\cgrpm{R})$, given as follows:
    For $a \in R^\bullet$ with $(R/aR) = (T_1) + \cdots + (T_n)$ for $n \in \bN_0$ and towers $T_1$, $\ldots\,$,~$T_n$,
    \[
    \overline\theta(a) = \langle T_1\rangle \cdots\langle T_n\rangle.
    \]
    \textup{(}The product is the formal product in the free abelian monoid over $\cgrpm{R}$.\textup{)}

  \item \label{thm-transfer:cp}
    If \labelcref{c-fos} holds, then $\theta\colon R^\bullet \to \cp{R}$, $a \mapsto (R/aR)$ is a transfer homomorphism.

  \item \label{thm-transfer:cp-weak}
    If $\langle V \rangle =\mathbf 0 \in \cgrp{R}$ for every faithful simple module $V$, then $\theta$ is a weak transfer homomorphism.
  \end{enumerate}
\end{thm}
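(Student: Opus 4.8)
The plan is to establish the transfer axioms for all three maps simultaneously, using that each is built from $a \mapsto (R/aR)$. For \emph{well-definedness}: $R/aR$ has finite length, and $aR \cong R$ (left multiplication by $a$) gives $\Psi(aR) = \Psi(R)$, so $(R/aR) \in \famon{\towers{R}}$ by \cref{p-f}; its unique tower decomposition $(R/aR) = (T_1) + \cdots + (T_n)$ then determines $\overline\theta(a) = \langle T_1\rangle \cdots \langle T_n\rangle$ and $\theta(a) = (R/aR)$. Each map is a homomorphism because $(R/abR) = (R/aR) + (aR/abR)$ and $aR/abR \cong R/bR$, and $\overline\theta$ indeed lands in $\cB(\cgrpm{R})$ since $\langle R/aR\rangle = \mathbf 0$ in $\cgrp{R}$ as $(R/aR) \in \cp{R} \subset \qcp{R}$. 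For \ref{th:units}: $\theta(a)$ (resp.\ $\overline\theta(a)$) is trivial iff $R/aR = \mathbf 0$ iff $aR = R$, in which case a right inverse of $a$ is two-sided, so $a \in R^{\times}$; the reverse inclusion is immediate. Surjectivity of $\theta$ is the definition of $\cp{R}$; for $\overline\theta$, a zero-sum sequence $\langle T_1\rangle \cdots \langle T_n\rangle$ over $\cgrpm{R}$ has $(T_1) + \cdots + (T_n) \in \famon{\towers{R}}$ of trivial class, hence in $\famon{\towers{R}} \cap \qcp{R} = \cp{R}$ by the divisor-homomorphism part of \cref{p-krull}, so it equals some $(R/aR)$.

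For the lifting axiom \ref{th:lift} in parts \labelcref{thm-transfer:zss} and \labelcref{thm-transfer:cp}: given $\overline\theta(a) = s\,t$ (resp.\ $\theta(a) = st$), reorder the tower decomposition of $(R/aR)$ so that the towers $(T_1), \ldots, (T_m)$ feeding into $s$ come first. Applying \subref{l-towerlift:weak} — or \subref{l-towerlift:strong} when \labelcref{c-fos} holds — yields a chain $R = I_0 \supsetneq \cdots \supsetneq I_n = aR$ of right $R$-ideals whose $i$-th subquotient is a single tower of class $\langle T_i\rangle$, equal to $(T_i)$ on the nose under \labelcref{c-fos}. Put $J = I_m$. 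Because $s$ is a zero-sum sequence (resp.\ $s \in \cp{R} \subset \qcp{R}$), the classes $\langle T_1\rangle, \ldots, \langle T_m\rangle$ sum to $\mathbf 0$, so \cref{l-free} applied to $J \subset R$ makes $J$ free; since $\udim J = \udim R$, this forces $J = bR$ with $b \in R^{\bullet}$, and $aR \subset bR$ gives $a = bc$ with $c \in R^{\bullet}$. Then $(R/bR) = (R/J)$ realizes $s$ and $(R/cR) \cong (bR/aR)$ realizes $t$ — as multisets of classes for $\overline\theta$, of towers for $\theta$ under \labelcref{c-fos} — which proves parts \labelcref{thm-transfer:zss} and \labelcref{thm-transfer:cp}.

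For part \labelcref{thm-transfer:cp-weak}, observe that $\cp{R}$ is Krull, hence atomic, by \cref{p-krull}, and \ref{th:units} holds as before. The key new step is to describe $\cA(\cp{R})$ under the hypothesis that $\langle V \rangle = \mathbf 0$ for every faithful simple $V$: write an atom $v = v^{c} + v^{f}$, with $v^{c}$ collecting the composition factors in cycle towers and $v^{f}$ those in the (trivial, by \labelcref{c-ft}) faithful towers; then $\langle v^{f}\rangle = \mathbf 0$, hence $\langle v^{c}\rangle = \mathbf 0$, so $v^{c}, v^{f} \in \famon{\towers{R}} \cap \qcp{R} = \cp{R}$, and atomicity forces one of them to vanish — moreover a purely faithful atom must be a single faithful simple, since any proper partial sum of its factors again lies in $\cp{R}$. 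Now for \ref{wth:lift}: given $\theta(a) = v_1 + \cdots + v_k$ with the $v_i$ atoms, decompose $R/aR = C' \oplus F'$ with all composition factors of $C'$ in cycle towers and those of $F'$ faithful (\cite[Theorem 41.1]{levy-robson11}), so the cyclic $v_i$ sum to $(C')$ and the faithful $v_i$ to $(F')$. Let $J \subset R$ be the preimage of $F' \subset R/aR$; then $(R/J) = (C')$ has trivial class, $J = bR$ is free by \cref{l-free}, and $a = bc$ with $(R/bR) = (C')$, $(R/cR) \cong (bR/aR) = (F')$. To factor $b$: list the cycle towers of $(C')$ grouped by the atom they belong to, apply \subref{l-towerlift:strong} (legitimate here because these are all cycle towers), and coarsen the resulting chain at the group boundaries — exact tower matches make the coarse subquotients the cyclic atoms, and every intermediate right ideal is principal by \cref{l-free} since the partial sums of classes vanish. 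To factor $c$: take any composition series of $R/cR \cong F'$, whose factors are the faithful atoms up to a permutation, each intermediate right ideal again principal by \cref{l-free} since faithful classes vanish. Concatenating gives $a = u_1 \cdots u_k$ with $\{\theta(u_1), \ldots, \theta(u_k)\} = \{v_1, \ldots, v_k\}$ as multisets, hence $\theta(u_i) = v_{\sigma(i)}$ for a suitable $\sigma \in \fS_k$, and each $u_i$ is an atom of $R^{\bullet}$ because $\theta(u_i)$ is an atom of $\cp{R}$ while $\theta^{-1}(1) = R^{\times}$.

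The main obstacle, and the reason part \labelcref{thm-transfer:cp-weak} is not a formal corollary of part \labelcref{thm-transfer:cp}, is that without \labelcref{c-fos} the lifting lemma \cref{l-towerlift} controls only the $\cgrp{R}$-class of each subquotient of a chain, not the tower realized there; two faithful towers of the same class can be swapped along a maximal chain, so only a weak transfer homomorphism — carrying the permutation $\sigma$ — can be expected. The part-\labelcref{thm-transfer:cp-weak} hypothesis is exactly what confines every such swap to the trivial class, keeping every intermediate right ideal principal through \cref{l-free}. The delicate bookkeeping is the split into "cyclic" atoms (handled by the strong lifting \subref{l-towerlift:strong}, which for cyclic towers needs no extra hypothesis) and "faithful" atoms (handled by an arbitrary composition series, using triviality of faithful towers), together with the check that the coarsened chain has subquotients equal to, not merely class-equivalent to, the prescribed atoms.
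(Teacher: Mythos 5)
Your argument is correct, and for parts (1) and (2) it is essentially the paper's proof: the same homomorphisms $\theta$, $\beta$, $\overline\theta$, surjectivity via \cref{l-k0-cycgen} and \cref{l-free} (you route it through \cref{p-krull}, which rests on those same lemmas), and lifting via \cref{l-towerlift} followed by \cref{l-free}. One citation nitpick there: \subref{l-towerlift:weak} and \subref{l-towerlift:strong} as stated only give a chain $R \supset I_0 \supset \cdots \supset I_n = aR$, a priori with a leftover $R/I_0$; your claim that $I_0=R$ is true because $(R/aR)$ is exactly a sum of $n$ full towers (this is precisely the observation made in the proof of \subref{l-towerlift:full}), but the paper sidesteps the issue by applying \subref{l-towerlift:weak-upper}, resp.\ \subref{l-towerlift:strong-upper}, to the first $m$ towers only. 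In part (3) you take a genuinely different route for the two sub-steps: you classify the atoms of $\cp{R}$ purely combinatorially --- every atom is a sum of cycle towers or a single faithful simple, using $\famon{\towers{R}} \cap \qcp{R} = \cp{R}$ together with the hypothesis $\langle V\rangle=\mathbf 0$ --- whereas the paper argues module-theoretically via \subref{int:upper} and atomicity of $a_i$ in $R^\bullet$; and you split $R/aR = C'\oplus F'$ by \cite[Theorem 41.1]{levy-robson11}, chaining the cyclic block with \subref{l-towerlift:strong} (coarsened at group boundaries) and the faithful block with an arbitrary composition series, all intermediate ideals principal by \cref{l-free}, whereas the paper stacks the cyclic atoms on top via \subref{l-towerlift:strong-upper} and handles the faithful residue with \subref{l-towerlift:full}. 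Your variant is slightly more self-contained in part (3) (no appeal to \cref{int} or \subref{l-towerlift:full}), at the cost of invoking the direct-sum decomposition of finite-length modules; both versions rest on the same two pillars, \cref{l-towerlift} and \cref{l-free}, and both produce the factorization only up to a permutation, which --- as you correctly explain --- is exactly why no more than a weak transfer homomorphism can be expected without \labelcref{c-fos}.
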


\begin{proof}
  We start by constructing the involved homomorphisms.
  Let $\theta\colon (R^\bullet,\cdot) \to (\cp{R},+)$ be defined by $\theta(a) = (R/aR)$.
  Then $\theta(1) = \mathbf 0$ and $\theta(ab) = (R/abR) = (R/aR) + (aR/abR) = (R/aR) + (R/bR)$.
  Thus, $\theta$ is a homomorphism of monoids.
  Let $\pi\colon (\fagrp{\towers{R}},+) \to (\cgrp{R},+)$ be the canonical epimorphism, which satisfies $(R/I) \mapsto \langle R/I\rangle$ for all right $R$-ideals $I$ in the principal genus.

  We define a surjective homomorphism $\beta_0 \colon (\famon{\towers{R}},+) \to (\famon{\cgrpm{R}},\cdot)$ by means of $\beta_0((T)) = \langle T\rangle$ for all towers $T$.
  Thus, if $T_1$, $\ldots\,$,~$T_n$ are towers, then
  \[
  \beta_0\big((T_1)+\cdots+(T_n)\big)=\langle T_1 \rangle \cdots \langle T_n \rangle,
  \]
  where the product on the right hand side is the formal product in $\famon{\cgrpm{R}}$.
  Let $\sigma\colon(\famon{\cgrpm{R}},\cdot) \to (\cgrp{R},+)$ be the sum homomorphism, which maps $g_1\cdots g_l \mapsto g_1 + \cdots + g_l$.
  It follows that $\sigma \circ \beta_0 = \pi|_{\famon{\towers{R}}}$.
  Since
  \[
  \cB(\cgrpm{R}) = \{\, S \in \famon{\cgrpm{R}} \mid \sigma(S) = 0 \,\},
  \]
  the map $\beta_0$ restricts to a homomorphism $\beta\colon \cp{R} \to \cB(\cgrpm{R})$.
  If $S \in \cB(\cgrpm{R})$, then there exist towers $T_1$, $\ldots\,$,~$T_n$ such that $S=\langle T_1\rangle \cdots \langle T_n\rangle$ and $\langle T_1 \rangle + \cdots + \langle T_n \rangle = \mathbf 0$ in $\cgrp{R}$.
  By \subref{l-k0-cycgen:constr}, there exists a right $R$-ideal $I$ such that $(R/I) = (T_1) + \cdots + (T_n)$.
  Then $I$ is free by \cref{l-free}.
  Thus $I=aR$ with $a \in R^\bullet$, and $\beta(R/aR) = S$ by construction.
  Therefore $\beta$ is surjective.

  Finally, we set $\overline \theta = \beta \circ \theta\colon R^\bullet \to \cB(\cgrpm{R})$.
  By definition of $\cp{R}$, the map $\theta$ is surjective.
  Since $\beta$ is surjective, so is $\overline \theta$.
  If $a \in R^\bullet$ with $\overline\theta(a) = 1$, then $\theta(a) = (R/aR) = \mathbf 0$.
  Hence $aR=R$, and $a \in R^\times$.
  Thus, property~\labelcref{th:units} of a transfer homomorphism holds for $\theta$ and $\overline\theta$.
  \[
  \begin{tikzcd}
                          & (\famon{\towers{R}},+) \ar[r,"\beta_0"] \ar[rr,"\pi|_{\famon{\towers{R}}}",bend left=15]  & (\famon{\cgrpm{R}},\cdot) \ar[r,"\sigma"] & (\cgrp{R},+) \\
  (R^\bullet,\cdot) \ar[r,"\theta"] \ar[rr,"\overline\theta"',bend right=15] & (\cp{R},+) \ar[r,"\beta"] \ar[u,hook] & (\cB(\cgrpm{R}),\cdot) \ar[u,hook]
  \end{tikzcd}
  \]

  \ref*{thm-transfer:zss}
  To conclude that $\overline\theta$ is a transfer homomorphism, we still have to show:
  If $a \in R^\bullet$ and $\overline\theta(a) = BC$ with $B$,~$C \in \cB(\cgrpm{R})$, then there exist $b$,~$c \in R^\bullet$ such that $a=bc$, that $\overline\theta(b)=B$, and that $\overline\theta(c)=C$.
  We have $(R/aR) = (T_1) + \cdots + (T_n)$ with towers $T_1$, $\ldots\,$,~$T_n$.
  Now $\beta(R/aR) = \langle T_1\rangle\cdots \langle T_n\rangle$.
  Since $\famon{\cgrpm{R}}$ is a free abelian monoid, we may, after renumbering the towers, without restriction assume $B=\langle T_1\rangle\cdots \langle T_m\rangle$ and $C = \langle T_{m+1}\rangle\cdots \langle T_n\rangle$ for some $m \in [0,n]$.
  Using \subref{l-towerlift:weak-upper}, we find a right $R$-ideal $I$ such that $R \supset I \supset aR$, that $(R/I) = (T_1') + \cdots + (T_m')$ with towers $T_1'$, $\ldots\,$,~$T_m'$, and that $\langle T_i'\rangle = \langle T_i \rangle$ for all $i \in [1,m]$.
  Since $\mathbf 0 = \sigma(B) = \langle T_1\rangle + \cdots + \langle T_m\rangle$, the right $R$-ideal $I$ is free by \cref{l-free}.
  Hence $I=bR$ for some $b \in R^\bullet$ and $\overline\theta(b) = B$.
  By taking $c \in R^\bullet$ with $a=bc$ we necessarily have $\overline\theta(c)=C$, and the claim follows.

  \ref*{thm-transfer:cp}
  Suppose $a \in R^\bullet$ and $(R/aR) = (R/bR) + (R/cR)$ with $b$, $c \in R^\bullet$.
  We have to show that there exist $b'$,~$c' \in R^\bullet$ such that $a=b'c'$, that $(R/bR)=(R/b'R)$, and that $(R/cR)=(R/c'R)$.
  Let $(R/bR) = (T_1) + \cdots + (T_m)$ and $(R/cR) = (T_{m+1}) + \cdots + (T_{n})$ with $n \in \bN_0$, with $m \in [0,n]$, and with towers $T_1$, $\ldots\,$,~$T_n$.
  By \subref{l-towerlift:strong-upper}, there exists a right $R$-ideal $I$ with $aR \subset I \subset R$ such that $(R/I) = (T_1) + \dots + (T_m) = (R/bR)$.
  Then $(I/aR) = (T_{m+1}) + \dots + (T_{n}) = (R/cR)$.
  By \cref{l-free}, the right $R$-ideal $I$ is free.
  Thus $I = b'R$ with $b' \in R^\bullet$.
  Let $c' \in R^\bullet$ be such that $a=b'c'$.
  Then $(R/b'R)=(R/bR)$ and $(R/c'R)=(R/cR)$.

  \ref*{thm-transfer:cp-weak}
  If $a \in R^\bullet$ and $a=bc$ with $b$,~$c \in R^\bullet \setminus R^\times$, then $\theta(b)$,~$\theta(c) \ne \mathbf 0$, and hence $\theta(a)=\theta(b)+\theta(c)$ with $\theta(b)$, $\theta(c)$ non-units of $(\cp{R},+)$.
  Thus, if $a \in R^\bullet$ and $\theta(a)$ is an atom of $\cp{R}$, then $a$ is an atom of $R^\bullet$.
  It remains to show:
  If $\theta(a) = (R/a_1R) + \cdots + (R/a_kR)$ with $(R/a_1R)$, $\ldots\,$,~$(R/a_kR)$ atoms of $\cp{R}$ and $k \ge 1$, then there exist $u_1$, $\ldots\,$,~$u_k \in R^\bullet$ and a permutation $\sigma \in \fS_k$ such that $a=u_1\cdots u_k$ and $\theta(u_i) = (R/a_{\sigma(i)}R)$ for all $i \in [1,k]$.

  Suppose that $i \in [1,k]$ is such that $(R/a_iR)$ contains a faithful simple module $W$, that is, $(R/a_iR) = (W) + (X)$ with $(X) \in \famon{\towers{R}}$.
  By \subref{int:upper}, there exists a maximal right $R$-ideal $I$ such that $a_iR \subset I \subset R$ and $\langle R/I\rangle=\langle W\rangle$.
  Since $\langle W \rangle = \mathbf 0$ by our assumption, \cref{l-free} implies $I=bR$ with $b \in R^\bullet$.
  But $a_i$ is an atom, hence $a_iR=bR$.
  Thus, if $(R/a_iR)$ contains a faithful simple module $W$, then $(R/a_iR)=(W)$.

  Let $\Omega \subset [1,k]$ be the subset of all indices $i$ for which $R/a_iR$ is a faithful simple module.
  Let $l = k - \card{\Omega}$, and let $\tau\colon [1,l] \to [1,k]\setminus \Omega$ be a bijective map.
  For all $i \in [1,l]$ the element $(R/a_{\tau(i)}R)$ is a sum of cycle towers.
  Hence, \subref{l-towerlift:strong-upper} implies that there exists a chain of right $R$-ideals
  \[
  aR \subset  I_l \subset I_{l-1} \subset \cdots \subset I_{1} \subset I_0=R,
  \]
  such that $(I_{i-1}/I_i)=(R/a_{\tau(i)}R)$ for all $i \in [1,l]$.
  Now $(I_l/aR)=\sum_{i \in \Omega} (R/a_iR)$, and $R/a_iR$ is a faithful simple module for all $i \in \Omega$.
  By \subref{l-towerlift:full}, there exists a chain of right $R$-ideals
  \[
  aR=I_{k} \subset I_{k-1} \subset \cdots \subset I_l
  \]
  and a bijection $\pi\colon [1,k-l] \to \Omega$ such that $(I_{l+i-1}/I_{l+i})=(R/a_{\pi(i)}R)$ for all $i \in [1,k-l]$.
  Combining $\tau$ and $\pi$, we obtain a permutation $\sigma \in \fS_k$ such that $(I_{i-1}/I_{i})=(R/a_{\sigma(i)} R)$ for all $i \in [1,k]$.
  Then it follows, inductively through repeated application of \cref{l-free}, that $I_{i}$ is free and $I_i = u_1\cdots u_iR$ for some $u_1$, $\ldots\,$,~$u_k \in R^\bullet$.
  Since $I_k=aR$, we may choose $u_k$ in such a way that $a=u_1\cdots u_k$.
  With this choice, $\theta(u_i)=(R/u_iR)=(I_{i-1}/I_{i})=(R/a_{\sigma(i)}R)$, as required.
\end{proof}

\begin{remark} \label{r-main}
  \begin{enumerate}
    \item
      If $R$ is a bounded HNP ring and every stably free right $R$-ideal is free, then all assumptions of the previous theorem are satisfied (including the extra assumptions in \ref*{thm-transfer:cp} and \ref*{thm-transfer:cp-weak}).

    \item
      The isomorphism $\phi\colon \cgrp{R} \to \pcg{R}$ from \cref{t-k0-iso-pcg} maps $\cgrpm{R}$ to $\pcgm{R}$.
      Therefore, $\phi$ induces a monoid isomorphism
      \[
      \overline\phi\colon
      \begin{cases}
        \cB(\cgrpm{R}) &\to \cB(\pcgm{R}) \\
        \langle T_1 \rangle \cdots \langle T_n \rangle &\mapsto \phi\big(\langle T_1\rangle\big) \cdots \phi\big(\langle T_n\rangle\big).
      \end{cases}
      \]
      Then also $\overline\phi\circ\overline\theta$ is a transfer homomorphism.
      Working with $\overline\theta$ is of course trivially equivalent to working with $\overline\phi\circ\overline\theta$.

      In this section, we will state all results for $\overline\theta$.
      For a concrete ring, it may be more useful to think in terms of $\pcg{R}$ rather than $\cgrp{R}$, because the former group appears as a factor of $K_0(R)$ and has been studied before.

    \item
      By \cref{p-krull}, the monoid $\cp{R}$ is a commutative Krull monoid, and $\cp{R} \to \famon{\cgrpm{R}}$ is a divisor homomorphism.
      The map $\beta\colon \cp{R} \to \cB(\cgrpm{R})$ in the previous proof is therefore the usual transfer homomorphism of a commutative Krull monoid as given in \cite[Proposition 3.4.8]{ghk06}.
      (However, we do not know if $\cC$ is always the divisor class group of $\cp{R}$, since we do not know whether $\cp{R} \to\famon{\cgrpm{R}}$ is a divisor theory.)
      If $\theta$ is a transfer homomorphism, then the results about $\overline\theta$ in \cref{thm-transfer,thm-catenary} follow from the decomposition $\overline\theta=\beta\circ\theta$ and the corresponding results about $\theta$ and $\beta$.

    \item
      In the absence of \labelcref{c-fos}, the map $\theta$ need not be a transfer homomorphism:
      Let $R$ be a prime principal ideal ring having two non-isomorphic faithful simple modules $V$ and $W$ with $\Ext_R^{1}(V,W) \ne \mathbf 0$.
      Then there exists a uniserial module $U$ of length $2$ with top composition factor $V$ and bottom composition factor $W$.
      Let $I$ be a right $R$-ideal with $R/I \cong U$.
      Then $I=vwR$ with $v$,~$w \in \cA(R^\bullet)$ such that $(R/vR)=(V)$ and $(R/wR)=(W)$.
      Since $R/I$ is uniserial, $\rf{v,w}$ is the unique rigid factorization of $vw$.
      In particular, there exists no representation $vw=w'v'$ with $(R/vR)=(R/v'R)$ and $(R/wR)=(R/w'R)$.
      Hence, $\theta$ is no transfer homomorphism.
      However, $\theta$ is a weak transfer homomorphism by \subref{thm-transfer:cp-weak}.

      An explicit instance of this example is worked out in \cref{e-no-f2s-no-transfer}.

    \item If \labelcref{c-ft} does not hold, then $\overline\theta$ need not be a (weak) transfer homomorphism.
      See \cref{p-nonhf,e-no-ft}.

    \item \label{r-main:est} Let $\cO$ be the ring of algebraic integers in a number field $K$, let $A$ be a central simple $K$-algebra, and let $R$ be a classical hereditary $\cO$-order in $A$.
      Let $\cO_A^\bullet$ denote the submonoid of $\cO^\bullet$ consisting of all elements that are positive at each archimedean place of $K$ that ramifies in $A$.
      Assume further that $A$ satisfies the Eichler condition with respect to $\cO$.
      Estes, in \cite{estes91b}, showed that the reduced norm $\nr\colon R^\bullet \to \cO_A^\bullet$ is a transfer homomorphism.
      The monoid $\cO_A^\bullet$ is a regular congruence submonoid of $\cO_A$ (see \cite[Chapter 2.11]{ghk06}) and the divisor class group of $\cO_A^\bullet$ can be identified with a ray class group $\cC_A(\cO)$ of $\cO$.
      Moreover, $\cC_A(\cO) \cong \pcg{R} \cong \cgrp{R}$ and $\cgrpm{R}=\cgrp{R}$.
      This gives another way of constructing the transfer homomorphism $R^\bullet \to \cB(\cgrp{R})$ in the classical setting.
  \end{enumerate}
\end{remark}

\subsection{Catenary degrees} \label{ssec:catenary}

Next, we show that the catenary degree in the fibers is at most two for the transfer homomorphisms in \cref{thm-transfer}.
The proof of this is similar in principle to the one in \cite[Theorem 3.2.8]{ghk06}, which covers commutative Krull monoids and more general classes of commutative monoids, and the one in \cite[Proposition 7.7]{baeth-smertnig15} for arithmetical maximal orders.
However, the fact that we need to deal with non-trivial towers of simple modules increases the complexity of the proof.
Thus, we first prove a technical lemma (\cref{l-weave}), which deals with the, somewhat generic, induction that appears in all the proofs mentioned above.
In the proof of \cref{thm-catenary} itself, we are then able to concentrate on the aspects specific to our situation.

\begin{lemma} \label{l-lift-permuted}
  Let $H$ be a  monoid, let $T$ be a reduced commutative monoid, and let $\theta\colon H \to T$ be a transfer homomorphism.
  Let $a \in H$.
  If $z=\rf[\varepsilon]{u_1,\cdots,u_k}$ is a rigid factorization of $a$ and $\sigma \in \fS_k$ is a permutation, then there exists a rigid factorization $z'=\rf[\varepsilon]{u_1',\cdots,u_k'}$ of $a$ with $\theta(u_{\sigma(i)}')=\theta(u_i)$ for all $i \in [1,k]$.
  Moreover, there exists a $2$-chain in the fiber between $z$ and $z'$ \textup{(}for any distance $\sd$ on $H$\textup{)}.
\end{lemma}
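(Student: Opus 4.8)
The plan is to reduce a general permutation to a composition of transpositions of adjacent indices, and to settle that basic case by a single use of the lifting property \ref*{th:lift} of a transfer homomorphism together with the distance axioms. For the reduction, the key observation is that the assertion is stable under composition of permutations: suppose there is a $2$-chain in the fiber of $z$ from $z$ to a factorization $z_1 = \rf[\varepsilon]{w_1,\cdots,w_k}$ with $\theta(w_{\sigma'(i)}) = \theta(u_i)$ for all $i$, and a $2$-chain in the fiber of $z_1$ from $z_1$ to $z' = \rf[\varepsilon]{w_1',\cdots,w_k'}$ with $\theta(w_{\tau(i)}') = \theta(w_i)$ for all $i$, where $\tau$ is an adjacent transposition. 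Then --- since ``lying in a common fiber'' is an equivalence relation on $\sZ_H^*(a)$, so the fibers of $z$ and $z_1$ coincide --- the concatenated chain lies in the fiber of $z$, and its endpoint satisfies $\theta(w_{\tau(\sigma'(i))}') = \theta(w_{\sigma'(i)}) = \theta(u_i)$, i.e.\ it realizes $\tau\circ\sigma'$. As every $\sigma \in \fS_k$ is a product of transpositions of the form $(j,j+1)$, an induction on the number of such factors (with $\sigma=\id$ trivial) reduces the lemma to the case $\sigma = (j,j+1)$ for some $j \in [1,k-1]$.

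For $\sigma = (j,j+1)$, I would write $z = x_1 \rfop \rf{u_j,u_{j+1}} \rfop x_2$, where $x_1$ carries the unit $\varepsilon$ together with the atoms $u_1,\cdots,u_{j-1}$ (the empty factorization tagged by $\varepsilon$ when $j=1$) and $x_2$ is $\rf{u_{j+2},\cdots,u_k}$ (the empty factorization when $j+1=k$); unwinding the monoid operation on $\sZ^*(H)$ confirms this product equals $z$. Put $b = u_ju_{j+1} \in H$. Since $T$ is commutative, $\theta(b) = \theta(u_{j+1})\theta(u_j)$, so \ref*{th:lift} provides $c$,~$d \in H$ with $b = cd$, $\theta(c) = \theta(u_{j+1})$ and $\theta(d) = \theta(u_j)$; because $u_j$,~$u_{j+1}$ are atoms of $H$, their $\theta$-images are atoms of $T$, and hence $c$ and $d$ are atoms of $H$ (by the characterization of atoms under a transfer homomorphism recalled after Definition~\ref{th}). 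Setting $z' = x_1 \rfop \rf{c,d} \rfop x_2$, one has $\pi(z') = \varepsilon u_1\cdots u_{j-1}(cd)u_{j+2}\cdots u_k = \pi(z) = a$, and the list of $\theta$-images of the atoms of $z'$ is that of $z$ with positions $j$ and $j+1$ interchanged, so $z$ and $z'$ lie in a common fiber and $z'$ realizes $(j,j+1)$. Finally, cancelling the common prefix $x_1$ and common suffix $x_2$ by \ref*{d:mul} gives $\sd(z,z') = \sd\!\big(\rf{u_j,u_{j+1}},\rf{c,d}\big) \le \max\{2,2,1\} = 2$ by \ref*{d:len}, so the one-step sequence $z$, $z'$ is a $2$-chain in the fiber; this completes the adjacent case and, with the reduction, the lemma.

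I do not expect any genuine obstacle: the argument is essentially a bookkeeping exercise. The points that need care are the handling of the unit tag $\varepsilon$ in the operation on $\sZ^*(H)$ --- in particular the degenerate case $j=1$, where the prefix factorization is empty and one must verify that the tag is absorbed as claimed --- and confirming transitivity of the fiber relation, which is what allows the chains obtained for successive adjacent transpositions to be concatenated without ever leaving the fiber of $z$.
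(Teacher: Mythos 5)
Your proof is correct and follows essentially the same route as the paper's: reduce to adjacent transpositions generating $\fS_k$, lift the commuted product via \labelcref{th:lift}, and bound the distance by $2$ using \labelcref{d:mul} and \labelcref{d:len}. The extra bookkeeping you supply (transitivity of the fiber relation, handling of the unit tag) is sound but is left implicit in the paper's shorter argument.
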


\begin{proof}
  We may assume $k \ge 2$, as the claim is trivially true otherwise.
  The symmetric group $\fS_k$ is generated by transpositions of the form $(i,\,i+1)$ with $i \in [1,k-1]$.
  Therefore, it suffices to show the claim for such transpositions.
  By commutativity of $T$, we have
  \[
  \theta(u_i u_{i+1})=\theta(u_i)\theta(u_{i+1})=\theta(u_{i+1})\theta(u_i).
  \]
  Since $\theta$ is a transfer homomorphism, there exist $u_i'$,~$u_{i+1}' \in \cA(H)$ such that $u_iu_{i+1}=u_i'u_{i+1}'$ with $\theta(u_i')=\theta(u_{i+1})$ and $\theta(u_{i+1}')=\theta(u_{i})$.
  Setting
  \[
  z'=\rf[\varepsilon]{u_1,\cdots,u_{i-1},u_{i}',u_{i+1}',u_{i+2},\cdots,u_k},
  \]
  it follows from the defining properties of a distance that $\sd(z,z') \le 2$.
\end{proof}

\begin{lemma} \label{l-weave}
  Let $H$ be a monoid, let $T$ be a reduced commutative monoid, and let $\theta\colon H \to T$ be a transfer homomorphism.
  Assume that there exists a function $\delta\colon \cA(H) \times \cA(H) \to \bN_0$ having the following property:
  \begin{enumerate}
    \item[] If $a=u_1\cdots u_k$ with $k > 2$ and $u_1$, $\ldots\,$,~$u_k \in \cA(H)$, and $v \in \cA(H)$ is such that $u_1H \ne vH$, that $\theta(u_1)=\theta(v)$, and that $a \in vH$, then there exist $u_1'$, $w$,~$w' \in H$ such that $u_1w=u_1'w'$, that $a \in u_1wH$, that $\delta(u_1',v) < \delta(u_1,v)$, that $\theta(u_1')=\theta(u_1)$, and that $\theta(w)=\theta(w')=\theta(u_i)$ for some $i \in [2,k]$.
  \end{enumerate}
  Then $\sc_\sd(H,\theta) \le 2$ for any distance $\sd$ on $H$.
\end{lemma}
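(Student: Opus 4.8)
The plan is to prove, by induction on $k$, the following statement: for every $a \in H$ and all rigid factorizations $z, z' \in \sZ^*(a)$ of the same length $k$ lying in the same fiber of $\theta$, there is a $2$-chain in the fiber between $z$ and $z'$. (Two factorizations in the same fiber automatically have the same length, so this yields $\sc_\sd(H,\theta) \le 2$.) If $k \le 2$, property (D5) of a distance gives $\sd(z,z') \le \max\{k,1\} \le 2$, so $z, z'$ is already a $2$-chain and there is nothing to prove. So assume $k \ge 3$ and that the claim holds for all smaller lengths.

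Fix $a$ and $z, z'$. Using \cref{l-lift-permuted} I would first bring $z$, by a $2$-chain in the fiber, to a factorization whose leading atom has the same $\theta$-value as that of $z'$; after also absorbing leading units into the first atoms, we may write $z = \rf{v_1, u_2, \cdots, u_k}$ and $z' = \rf{v_1', u_2', \cdots, u_k'}$ with $v_1, v_1' \in \cA(H)$ and $\theta(v_1) = \theta(v_1')$. Now I would run an inner induction on $\delta(v_1, v_1') \in \bN_0$. Suppose first $v_1 H = v_1' H$, say $v_1' = v_1\eta$ with $\eta \in H^\times$. Cancelling $v_1$ in $a = v_1 u_2\cdots u_k = v_1(\eta u_2')u_3'\cdots u_k'$ yields two rigid factorizations $\rf{u_2,\cdots,u_k}$ and $\rf{\eta u_2', u_3', \cdots, u_k'}$ of a common element, both of length $k-1$; since $z, z'$ lie in the same fiber and $\theta(v_1)=\theta(v_1')$, removing this common $\theta$-value shows the two shorter factorizations lie in the same fiber. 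By the outer induction hypothesis they are joined by a $2$-chain in the fiber; left-multiplying the whole chain by $v_1$ (which preserves distances by (D4) and keeps every term in the fiber of $z$) gives a $2$-chain in the fiber from $z$ to $\rf{v_1, \eta u_2', u_3', \cdots, u_k'} = z'$. Together with the preliminary realignment this settles the case $v_1 H = v_1' H$.

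The heart of the argument is the case $v_1 H \ne v_1' H$. Here I would apply the hypothesis of the lemma to the factorization $a = v_1 u_2 \cdots u_k$ (note $k > 2$) and to $v = v_1'$, whose three conditions hold ($v_1 H \ne v_1' H$, $\theta(v_1) = \theta(v_1')$, $a \in v_1' H$). This produces $v_1^\dagger, w, w' \in H$ with $v_1 w = v_1^\dagger w'$, $a \in v_1 w H$, $\delta(v_1^\dagger, v_1') < \delta(v_1, v_1')$, $\theta(v_1^\dagger) = \theta(v_1)$, and $\theta(w) = \theta(w') = \theta(u_\ell)$ for some $\ell \in [2,k]$. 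Since $\theta$ is a transfer homomorphism into a reduced monoid it maps atoms to atoms, so $\theta(v_1^\dagger), \theta(w), \theta(w') \in \cA(T)$ and hence $v_1^\dagger, w, w' \in \cA(H)$. Write $a = v_1 w c$ with $c \in H$; cancelling $v_1$ gives $u_2\cdots u_k = w c$, so in $T$ we get $\theta(c) = \prod_{j \in [2,k]\setminus\{\ell\}} \theta(u_j)$, a product of $k-2$ atoms of $T$. By repeated use of the lifting property (T2) we may write $c = p_1 \cdots p_{k-2}$ with $p_1, \ldots, p_{k-2} \in \cA(H)$ and $\{\theta(p_1), \ldots, \theta(p_{k-2})\} = \{\theta(u_j) : j \in [2,k]\setminus\{\ell\}\}$ as multisets. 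Put
\[
z_1 = \rf{v_1, w, p_1, \cdots, p_{k-2}}, \qquad z_2 = \rf{v_1^\dagger, w', p_1, \cdots, p_{k-2}}.
\]
Using $a = v_1 w c$ and $v_1^\dagger w' = v_1 w$, both $z_1$ and $z_2$ are rigid factorizations of $a$, and both lie in the fiber of $z$ (their $\theta$-value multisets both equal $\{\theta(v_1)\} \cup \{\theta(u_j) : j \in [2,k]\}$). Now $z$ and $z_1$ are each $v_1$ times a length-$(k-1)$ rigid factorization of $u_2\cdots u_k = w p_1\cdots p_{k-2}$, and these two shorter factorizations lie in the same fiber, so by the outer induction hypothesis and (D4) there is a $2$-chain in the fiber between $z$ and $z_1$. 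On the other hand $z_1$ and $z_2$ share the suffix $(p_1, \ldots, p_{k-2})$, while their length-$2$ heads $\rf{v_1, w}$ and $\rf{v_1^\dagger, w'}$ are rigid factorizations of the single element $v_1 w = v_1^\dagger w'$, hence at distance $\le 2$ by (D5); by (D4) this gives $\sd(z_1, z_2) \le 2$, that is, $z_1, z_2$ is a $2$-chain in the fiber. Concatenating, $z$ is joined to $z_2$ in the fiber by a $2$-chain, and $z_2$ has leading atom $v_1^\dagger$ with $\theta(v_1^\dagger) = \theta(v_1')$ and $\delta(v_1^\dagger, v_1') < \delta(v_1, v_1')$. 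Replacing $z$ by $z_2$ and invoking the inner induction hypothesis finishes the inner induction, hence the outer one; since every distance used is $\le 2$, this yields $\sc_\sd(H, \theta) \le 2$ for any distance $\sd$.

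The step I expect to be the main obstacle is exactly this last case: one must strictly decrease $\delta(v_1, v_1')$ while never leaving the fiber and never incurring a distance exceeding $2$. The non-obvious point is that the length-$k$ passage from $z$ to the $\delta$-reduced factorization $z_2$ can be routed through the auxiliary factorization $z_1$, splitting it into a length-$(k-1)$ move absorbed by the outer induction hypothesis and a length-$2$ ``head swap'' handled outright by (D5). One also has to observe that the elements $w, w', v_1^\dagger$ furnished by the hypothesis are genuine atoms — automatic because a transfer homomorphism into a reduced monoid preserves atoms — and that $c$ can be factored purely via (T2), with no appeal to atomicity of $H$.
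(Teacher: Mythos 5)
Your proposal is correct and follows essentially the same route as the paper: a nested (lexicographic) induction on the length $k$ and the value of $\delta$, realignment via \cref{l-lift-permuted}, the unit/cancellation argument when the leading principal right ideals coincide, and otherwise routing through an auxiliary factorization $\rf{v_1,w,p_1,\cdots,p_{k-2}}$ handled by the shorter-length induction hypothesis, a length-$2$ head swap of distance at most $2$ from \subref{d:len}, and the inner induction on the strictly decreased $\delta$-value. The only cosmetic difference is that the paper invokes \cref{l-lift-permuted} a second time to normalize the index $i$ to $2$, which your multiset bookkeeping of $\theta$-values renders unnecessary.
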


\begin{proof}
  Let $\sd$ be a distance on $H$.
  Let $a \in H \setminus H^\times$, and let $z=\rf{u_1,\cdots,u_k}$ and $z'=\rf{v_1,\cdots,v_k}$ be two factorizations of $a$ with $\theta(u_i)=\theta(v_{\sigma(i)})$ for a permutation $\sigma \in \fS_k$ and all $i \in [1,k]$.
  We have to show that there exists a $2$-chain in the fiber between $z$ and $z'$.
  By \cref{l-lift-permuted}, we may without restriction assume $\sigma=\id$, after replacing $z$ if necessary.
  Thus, we show the following claim for all $k \in \bN$ and $d \in \bN_0$.
  \begin{claim}[$A(k,d)$]
    If $z=\rf{u_1,\cdots,u_k}$ and $z'=\rf{v_1,\cdots,v_k}$ with $\delta(u_1,v_1)=d$, with $u_1\cdots u_k=v_1\cdots v_k$, and with $\theta(u_j)=\theta(v_j)$ for all $j \in [1,k]$, then there exists a $2$-chain in the fiber between $z$ and $z'$.
  \end{claim}

  We proceed by induction on $(k,d)$ in lexicographic order:
  Assume that $A(l,e)$ holds whenever either $l < k$, or $l=k$ and $e < d$.
  Since the claim is trivially true if $k \le 2$, also assume $k >2$.

  If $u_1H = v_1H$, there exists $\varepsilon \in H^\times$ such that $v_1\varepsilon = u_1$.
  Then
  \[
  z'=\rf{v_1\varepsilon,\varepsilon^{-1}v_2,v_3,\cdots,v_k}=\rf{u_1,\varepsilon^{-1}v_2,v_3,\cdots,v_k}.
  \]
  By induction hypothesis, there exists a $2$-chain in the fiber between $\rf{u_2,\cdots,u_k}$ and $\rf{\varepsilon^{-1}v_2,v_3,\cdots,v_k}$.
  Multiplying each factorization in this $2$-chain by $u_1$ from the left yields a $2$-chain in the fiber between $z$ and $z'$.

  Suppose now that $u_1H \ne v_1H$.
  By assumption, there exist $u_1'$, $w$,~$w' \in H$ such that $u_1w=u_1'w'$, that $a \in u_1wH$, that $\delta(u_1',v_1) < \delta(u_1,v_1)$, that $\theta(u_1')=\theta(u_1)$, and that $\theta(w)=\theta(w')=\theta(u_i)$ for some $i \in [2,k]$.
  Applying \cref{l-lift-permuted} twice, to $\rf{u_2,\cdots,u_k}$ and to $\rf{v_2,\cdots,v_k}$, we may assume $i=2$.
  Since $\theta$ is a transfer homomorphism, the elements $u_1'$, $w$, and $w'$ are atoms.
  Let $c \in H$ be such that $a=u_1wc$.
  Since $\theta(c)=\theta(u_3)\cdots\theta(u_k)$ and $\theta$ is a transfer homomorphism, there exists a factorization $y=\rf{w_3,\cdots,w_k}$ of $c$ with $\theta(w_j)=\theta(u_j)$ for all $j \in [3,k]$.
  By the induction hypothesis, there exists a $2$-chain in the fiber between $\rf{u_2,\cdots,u_k}$ and $w\rfop y$.
  Multiplying each factorization in this $2$-chain by $u_1$ from the left yields a $2$-chain in the fiber between $z$ and $\rf{u_1,w}\rfop y$.
  By the defining properties of a distance, $\sd(\rf{u_1,w}\rfop y, \rf{u_1',w'}\rfop y) \le 2$.
  Finally, since $\delta(u_1',v_1) < \delta(u_1,v_1)$, the induction hypothesis implies that there exists a $2$-chain in the fiber between $\rf{u_1',w'}\rfop y$ and $z'$.
  Concatenating all these $2$-chains, it follows that there exists a $2$-chain in the fiber between $z$ and $z'$.
\end{proof}

\begin{remark}
  The previous lemma also holds more generally in the case where $H$ is a cancellative small category.
  Furthermore, one may replace the assumption by:
  \begin{enumerate}
  \item[]
    There exists $N \in \bN_{\ge 2}$, such that, if $a=u_1\cdots u_k$ with $k > N$ and $u_1$, $\ldots\,$,~$u_k \in \cA(H)$, and $v \in \cA(H)$ is such that $u_1H \ne vH$, that $\theta(u_1)=\theta(v)$, and that $a \in vH$, then there exist $u_1'$, $w_1$, $\ldots\,$,~$w_{N-1}$, $w_1'$, $\ldots\,$,~$w_{N-1}' \in H$ such that $u_1w_1\cdots w_{N-1}=u_1'w_1'\cdots w_{N-1}'$, that $a \in u_1w_1\cdots w_{N-1}H$, that $\delta(u_1',v) < \delta(u_1,v)$, that $\theta(u_1')=\theta(u_1)$, and that $\theta(w_j) = \theta(w_j') = \theta(u_{\sigma(j)})$ for some injective map $\sigma\colon [1,N-1] \to [2,k]$.
  \end{enumerate}
  Then the conclusion becomes $\sc_\sd(H,\theta) \le N$.
\end{remark}

\begin{lemma} \label{l-tower-complete}
  Let $J \subset K_0 \subset I$ be right $R$-ideals such that $(I/J) \in \famon{\towers{R}}$ and $I/K_0$ is simple.
  Let $T$ be the tower containing $(I/K_0)$.
  Then there exists a right $R$-ideal $K$ such that $J \subset K \subset K_0$ and $(I/K) = (T)$.
\end{lemma}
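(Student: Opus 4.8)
The plan is to peel the composition factors of $T$ off the top of $I$ one at a time, while staying above $J$ at every step; this is in the same spirit as \cref{l-int-mod}, but the point here is to prescribe the \emph{top} of the peeled‑off quotient. Write $V = I/K_0$ and $\ell = \length{T}$. If $\ell = 1$, then $(I/K_0) = (V) = (T)$ and $K = K_0$ works, so assume $\ell \ge 2$. By \labelcref{c-ft} every faithful tower is trivial, hence $T$ is a cycle tower and all its composition factors are unfaithful; relabel the cyclically ordered set $T$ as $(V_1), \ldots, (V_\ell)$ with $(V_1) = (V)$ and $(V_{j+1})$ the unfaithful successor of $(V_j)$ for $j \in [1, \ell-1]$. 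Recall that every unfaithful simple module has a unique predecessor, so $(V_j)$ is the unique predecessor of $(V_{j+1})$.

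Since $(I/J) \in \famon{\towers{R}}$ and $(V_1)$ occurs in $(I/J)$ (because $I/K_0 \cong V_1$), the tower $T$ occurs in the expansion of $(I/J)$ over $\famon{\towers{R}}$, say with coefficient $c \ge 1$; consequently the coefficient of each $(V_j)$, $j \in [1,\ell]$, in $(I/J)$ equals $c$. I construct by induction on $i$ a chain of right $R$‑ideals $J \subseteq K_{\ell-1} \subseteq \cdots \subseteq K_1 \subseteq K_0$ with $(I/K_i) = (V_1) + \cdots + (V_{i+1})$ for $i \in [0, \ell-1]$, the case $i = 0$ being the hypothesis. Suppose $K_{i-1}$ is constructed, $1 \le i \le \ell - 1$. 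From $(K_{i-1}/J) = (I/J) - (I/K_{i-1})$ we read off that the coefficient of $(V_i)$ in the finite‑length module $N := K_{i-1}/J$ is $c - 1$ and that of $(V_{i+1})$ is $c$. Decompose $N = F \oplus C$ with all composition factors of $F$ in faithful towers and all composition factors of $C$ in cycle towers, and decompose $C$ as a direct sum of uniserial modules (\cite[Theorems 41.1 and 41.2]{levy-robson11}); all copies of $V_i$ and $V_{i+1}$ lie in $C$ since these modules are unfaithful. The composition factors of each uniserial summand of $C$ form a successor‑chain, so any occurrence of $V_{i+1}$ in $C$ that is not the top of its summand lies immediately below an occurrence of its unique predecessor $V_i$. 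If no uniserial summand of $C$ had top isomorphic to $V_{i+1}$, this would force the coefficient of $(V_{i+1})$ in $C$ to be at most that of $(V_i)$, i.e.\ $c \le c - 1$, a contradiction. Hence some uniserial summand of $C$ has top isomorphic to $V_{i+1}$, so $N = K_{i-1}/J$ admits $V_{i+1}$ as a quotient module. Pulling back the corresponding maximal submodule of $K_{i-1}/J$ along $K_{i-1} \to K_{i-1}/J$ yields a right $R$‑ideal $K_i$ with $J \subseteq K_i \subseteq K_{i-1}$ and $K_{i-1}/K_i \cong V_{i+1}$, so that $(I/K_i) = (I/K_{i-1}) + (K_{i-1}/K_i) = (V_1) + \cdots + (V_{i+1})$. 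Taking $K = K_{\ell - 1}$ finishes the construction: $J \subseteq K \subseteq K_0$ and $(I/K) = (V_1) + \cdots + (V_\ell) = (T)$.

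The main obstacle is precisely the requirement $K \subseteq K_0$, i.e.\ that the peeled‑off quotient $I/K$ has $V = I/K_0$ at its top: a direct appeal to \subref{int:upper} or \subref{l-int-mod:upper} would produce a submodule $L$ of $I$ with $(I/L) = (T)$ but with no control over which maximal submodule of $I$ contains $L$. The inductive peeling above resolves this automatically, since each $K_i$ is built inside $K_{i-1}$, hence inside $K_0$. The essential input making each inductive step work is the successor‑chain structure of the composition factors of an indecomposable cycle‑tower module, which turns the comparison of the coefficients of $(V_{i+1})$ and $(V_i)$ in $C$ into the existence of a uniserial summand of $C$ with top $V_{i+1}$.
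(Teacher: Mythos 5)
Your proof is correct, but it takes a genuinely different route from the paper. The paper's proof works from the bottom: it uses \cite[Corollary 41.4]{levy-robson11} to split $K_0/J\cong F\oplus C$, enlarges $J$ by the faithful part $F$, writes $(K_0/J)=(T_1)+\cdots+(T_n)+(T)-(I/K_0)$ with cycle towers $T_i$, and then invokes \subref{l-towerlift:strong} to find $K$ with $(K/J)=(T_1)+\cdots+(T_n)$, so that the leftover on top of $K$ inside $K_0$ is exactly $(T)-(I/K_0)$; this leans on the previously established lifting machinery (\cref{int}, \cref{l-int-mod}, and ultimately the combinatorial \cref{lemma-comb}). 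You instead work from the top, peeling off the members of $T$ one composition factor at a time inside $K_0$, and at each step you justify the existence of a quotient of $K_{i-1}/J$ isomorphic to the next member $V_{i+1}$ by a direct multiplicity count: all unfaithful factors live in the cycle part $C$, the indecomposable summands of $C$ are uniserial with successor-chain composition series, and uniqueness of the predecessor of an unfaithful simple forces some summand to have top $V_{i+1}$ once the coefficient of $(V_{i+1})$ in $K_{i-1}/J$ exceeds that of $(V_i)$. Your argument is sound (the coefficients $c$ and $c-1$ are computed correctly, the pulled-back $K_i$ contains $J$ and is therefore a right $R$-ideal, and \labelcref{c-ft} is used exactly where needed to rule out nontrivial faithful towers), and it correctly identifies the crux, namely that \subref{int:upper} and \subref{l-int-mod:upper} give no control over the prescribed top $I/K_0$. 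What the two approaches buy: the paper's version is shorter given the lemmas already proved and reuses them uniformly, while yours is more self-contained — it bypasses \cref{l-towerlift}, \cref{l-int-mod}, and \cref{lemma-comb} entirely, needing only the Levy--Robson decomposition theorems and uniqueness of predecessors, and it makes transparent why the segment can be grown downward from a prescribed top.
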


\begin{proof}
  If $T$ is a faithful tower, then $T$ is trivial. Hence, the claim follows with $K=K_0$.
  Suppose now that $T$ is a cycle tower.
  By \cite[Corollary 41.4]{levy-robson11}, there exists an isomorphism $\alpha\colon K_0/J \isomto F \oplus C$ with $F$ a module all of whose composition factors are faithful and $C$ a module all of whose composition factors are unfaithful.
  Replacing $J$ by $J+\alpha^{-1}(F)$, we may without loss of generality assume $K_0/J \cong C$.
  Now $(K_0/J) = (T_1) + \cdots + (T_n) + (T) - (I/K_0)$ with cycle towers $T_1$, $\ldots\,$,~$T_n$.
  By \subref{l-towerlift:strong}, there exists a submodule $J \subset K \subset K_0$ such that $(K/J) = (T_1) + \cdots + (T_n)$.
  Then $(K_0/K) = (T) - (I/K_0)$ and hence $(I/K)=(T)$.
\end{proof}

In the following proof, we denote by $\val_V\colon \fagrp{\simple{R}} \to \bZ$, respectively by $\val_T\colon \fagrp{\towers{R}} \to \bZ$, the valuations corresponding to $(V)$, respectively $(T)$, where $V$ is a simple module and $T$ is a tower.
If $(V)$ is contained in $(T)$ and $(M) \in \fagrp{\towers{R}}$, then $\val_T(M) = \val_V(M)$.

\begin{thm} \label{thm-catenary}
  In the setting of \cref{thm-transfer}, we have $\sc_\sd(R^\bullet,\overline \theta) \le 2$ for any distance $\sd$ on $R^\bullet$.
  If, in addition, \labelcref{c-fos} holds, then also $\sc_\sd(R^\bullet,\theta) \le 2$.
\end{thm}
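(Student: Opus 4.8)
The plan is to apply \cref{l-weave}. Both maps in question are transfer homomorphisms by \cref{thm-transfer}: $\overline\theta\colon R^\bullet\to\cB(\cgrpm{R})$ always, and $\theta\colon R^\bullet\to\cp{R}$ when \labelcref{c-fos} holds. It therefore suffices, in each case, to produce a function $\delta\colon\cA(R^\bullet)\times\cA(R^\bullet)\to\bN_0$ having the exchange property required in \cref{l-weave}. I carry out the argument for $\overline\theta$; that for $\theta$ is formally the same, with $\cp{R}$ in place of $\cB(\cgrpm{R})$ and $(R/uR)\in\famon{\towers{R}}$ in place of its image $\beta_0((R/uR))\in\famon{\cgrpm{R}}$ throughout. (Alternatively, once $\theta$ is a transfer homomorphism one may deduce the bound for $\overline\theta=\beta\circ\theta$ from the one for $\theta$ together with the standard bound $\le 2$ for the catenary degree in the fibers of the transfer homomorphism $\beta$ of the commutative Krull monoid $\cp{R}$; see \cref{r-main} and \cite[Theorem 3.2.8]{ghk06}.)

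For atoms $u$,~$v$, the right ideal $uR\cap vR$ is an intersection of essential right ideals, hence essential, hence a right $R$-ideal, and $uR/(uR\cap vR)\cong(uR+vR)/vR$ is a submodule of $R/vR$, hence of finite length; I set
\[
\delta(u,v)=\length{uR/(uR\cap vR)}.
\]
If $\overline\theta(u)=\overline\theta(v)$ and $uR\subset vR$, then $(vR/uR)=(R/uR)-(R/vR)\in\fagrp{\towers{R}}\cap\famon{\simple{R}}=\famon{\towers{R}}$, and applying $\beta_0$ to the relation $(R/uR)=(R/vR)+(vR/uR)$ in $\famon{\towers{R}}$ yields $\overline\theta(u)=\overline\theta(v)\cdot\beta_0((vR/uR))$ in the free abelian monoid $\famon{\cgrpm{R}}$, forcing $(vR/uR)=\mathbf 0$, i.e.\ $uR=vR$. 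Hence $\overline\theta(u)=\overline\theta(v)$ and $uR\ne vR$ imply $uR\cap vR\subsetneq uR$, so $\delta(u,v)\ge 1$.

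Now for the exchange step. Let $a=u_1\cdots u_k$ with $k>2$ and atoms $u_1$,~$\ldots\,$,~$u_k$, and let $v\in\cA(R^\bullet)$ with $u_1R\ne vR$, $\overline\theta(u_1)=\overline\theta(v)$ and $a\in vR$; set $N=u_1R\cap vR$, so that $aR\subset N\subsetneq u_1R$. Fix a maximal submodule $M$ of $u_1R$ with $N\subset M\subsetneq u_1R$, put $V=u_1R/M$, and let $T$ be the tower containing $(V)$. Since $u_1R/aR\cong R/(u_2\cdots u_k)R$, we have $(u_1R/aR)=(R/u_2R)+\cdots+(R/u_kR)\in\famon{\towers{R}}$, so \cref{l-tower-complete} (with $J=aR$, $K_0=M$, $I=u_1R$) produces a right $R$-ideal $K$ with $aR\subset K\subset M$ and $(u_1R/K)=(T)$; in particular $T$ occurs among the towers of $R/(u_2\cdots u_k)R$, and we may fix $i\in[2,k]$ with $T$ a tower of $R/u_iR$. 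As $(R/u_iR)-(T)\in\famon{\towers{R}}$, \subref{l-towerlift:full} --- applied to $K\supset aR$, listing the towers of $(K/aR)$ so that those of $(R/u_iR)-(T)$ come first --- yields $L$ with $aR\subset L\subset K$ such that $(u_1R/L)=(u_1R/K)+(K/L)$ has the same multiset of tower-classes as $R/u_iR$. Since $\overline\theta(u_i)$ is a zero-sum sequence, \cref{t-k0-iso-pcg} gives
\[
[L]-[u_1R]=\Phi_0\big((u_1R/L)\big)=\phi\big(\sigma(\overline\theta(u_i))\big)=\mathbf 0,
\]
so $L$ is stably free (as $u_1R$ is free), hence free by hypothesis; thus $L=u_1wR$ for some $w\in R^\bullet$ with $\overline\theta(w)=\beta_0((u_1R/L))=\overline\theta(u_i)$ and $a\in u_1wR$. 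Writing $e=u_1w$, so $eR=L$ and $(R/L)=(R/u_1R)+(R/u_iR)\in\famon{\towers{R}}$, a second application of \subref{l-towerlift:full} to $R\supset L$ --- listing the towers of $R/u_1R$ first, and among such listings choosing one that positions the tower $T$ within this first block so that $u_1'R$ below meets $vR$ in a strictly larger submodule than $N$ --- produces a right $R$-ideal $X$ with $L\subset X\subset R$ and $\beta_0((R/X))=\overline\theta(u_1)$ (this is where it matters that $T$ is a tower of $R/vR$, because $V$ is a composition factor of $(u_1R+vR)/vR\subset R/vR$, and $\overline\theta(u_1)=\overline\theta(v)$). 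As before $X$ is free, $X=u_1'R$ with $\overline\theta(u_1')=\overline\theta(u_1)$, and, since $eR=L\subset X=u_1'R$, we have $e=u_1'w'$ with $w'=(u_1')^{-1}e\in R^\bullet$ and, by cancellativity in $\cB(\cgrpm{R})$, $\overline\theta(w')=\overline\theta(u_i)=\overline\theta(w)$. Finally, $N\subsetneq u_1'R\cap vR$ together with $\overline\theta(u_1')=\overline\theta(u_1)$, by comparing composition lengths, gives $\delta(u_1',v)<\delta(u_1,v)$. This verifies the hypothesis of \cref{l-weave}, whence $\sc_\sd(R^\bullet,\overline\theta)\le 2$, and under \labelcref{c-fos} also $\sc_\sd(R^\bullet,\theta)\le 2$.

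I expect the main obstacle to be the last part of the exchange step: realizing, through the two applications of \subref{l-towerlift:full}, a factorization $u_1w=u_1'w'$ for which simultaneously $a\in u_1wR$, $\overline\theta(u_1')=\overline\theta(u_1)$, $\overline\theta(w')=\overline\theta(u_i)$, all intermediate ideals remain principal --- this is where \cref{l-free} together with the zero-sum property of $\overline\theta(u_i)$ and $\overline\theta(u_1)$ enter --- and $\delta$ strictly decreases. Disentangling these requirements, in particular rotating the tower $T$ at the top of $u_1R/(u_1R\cap vR)$ into a position for which $u_1'R$ genuinely shares more with $vR$ than $u_1R$ does, is precisely where the combinatorics of \cref{lemma-comb}, propagated through \cref{l-int-mod} and \cref{l-towerlift}, does the real work. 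The bookkeeping is complicated further by the facts that a single atom may involve several towers, and that, absent \labelcref{c-fos}, distinct towers may carry the same class in $\cgrp{R}$, so that towers must be tracked at the level of classes while composition lengths are controlled separately; faithful towers, too, need not be handled in the order in which they occur.
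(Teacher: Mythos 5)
Your overall framework matches the paper's: both proofs run through \cref{l-weave} with the same $\delta$ (your $\length{u R/(uR\cap vR)}$ is the paper's length of $(uR+vR)/vR$), and your construction of the first refactorization $u_1w$ --- via \cref{l-tower-complete} applied to a maximal submodule over $u_1R\cap vR$, then \cref{l-towerlift} and the stably-free-implies-free hypothesis to make the resulting ideal principal with $\overline\theta(w)=\overline\theta(u_i)$ --- is essentially the paper's argument. The gap is exactly where you suspect it: the second application of \subref{l-towerlift:full}. That lemma only controls the composition classes $(I_{j-1}/I_j)$ and their images in $\cgrp{R}$ for a chain between two \emph{given} ideals; it gives no control whatsoever over how the intermediate ideals sit relative to $vR$. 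So the phrase ``choosing one that positions the tower $T$ within this first block so that $u_1'R$ below meets $vR$ in a strictly larger submodule than $N$'' is an unsupported wish, not a consequence of any cited lemma, and it is precisely the statement that needs proof. Moreover, even granting $u_1'R\cap vR\supsetneq N$, your ``comparing composition lengths'' does not obviously yield $\delta(u_1',v)<\delta(u_1,v)$: $\overline\theta(u_1')=\overline\theta(u_1)$ only matches multisets of tower \emph{classes}, and distinct towers in the same class of $\cgrp{R}$ may have different lengths, so $\length{R/u_1'R}$ need not equal $\length{R/u_1R}$.

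The paper closes this gap differently. It fixes $I$ maximal with $vR\subset I\subsetneq u_1R+vR$ (the overmodule corresponding to your $M$) and proves a separate Claim: there is a right $R$-ideal $L$ with $J\subset L\subset I$ and $(L/J)=(T')$ for a tower $T'$ with $\langle T'\rangle=\langle T\rangle$. The proof splits into two cases --- when $(u_1R+vR)/vR$ does not contain all of $(T)$, a valuation-counting argument (using that $T$ is then a cycle tower, by \labelcref{c-ft}, and comparing $\val_T(R/u_1R)$ with $\val_T(R/vR)$ via $\overline\theta(u_1)=\overline\theta(v)$) shows the missing simple module occurs in $I/(u_1R\cap I)$, so \subref{l-towerlift:strong} applies; otherwise $\overline\theta(u_1)=\overline\theta(v)$ directly supplies a tower of the right class inside $(u_1R+vR)/u_1R\subset I/(u_1R\cap I)$ and \subref{l-towerlift:weak} applies. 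Having built $L$ \emph{inside} $I$, and knowing $K$ is principal, \cref{l-free} makes $L=u_1'R$ principal with $\overline\theta(u_1')=\overline\theta(u_1)$, and the decrease is immediate from $u_1'R+vR\subset I\subsetneq u_1R+vR$ --- no length comparison between $R/u_1'R$ and $R/u_1R$ is needed. You would need to supply an argument of this kind (or an equivalent) for your proposal to be complete; as written, the exchange hypothesis of \cref{l-weave} is not verified.
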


\begin{proof}
  We use \cref{l-weave} to show $\sc_\sd(R^\bullet,\overline\theta) \le 2$.
  For $u$,~$v \in \cA(R^\bullet)$, we define $\delta(u,v)$ to be the length of the module $uR+vR/vR$.
  We have to verify that the property required in \cref{l-weave} is satisfied for this choice of $\delta$.
  Let $a \in R^\bullet$ and $a=u_1\cdots u_k$ with $k > 2$ and $u_1$, $\ldots\,$,~$u_k \in \cA(R^\bullet)$.
  Let $v \in \cA(R^\bullet)$ be such that $a \in vR$, that $\overline\theta(v)=\overline\theta(u_1)$, and that $u_1R \ne vR$.
  We first show $vR \subsetneq u_1R+vR$.
  Assume to the contrary that $vR = u_1R + vR$.
  Then $u_1R \subset u_1R + vR = vR$.
  Since $u_1$ and $v$ are both atoms, this implies $u_1R=vR$, a contradiction.

  \begin{figure}
    \begin{tikzcd}[every arrow/.append style={dash}, sep={1cm,between origins}]
      &                     &                                                & R \ar[d]                                                              &                                       &  &    \\
      &                     &                                                & \phantom{u_1R+I=}u_1R + vR = u_1R+I \ar[lllddd] \ar[rd]               &                                       &  &    \\
      &                     &                                                &                                                                       & I \ar[rrdd] \ar[dl,end anchor=center] &  &    \\
      &                     &                                                & \phantom{.} \ar[lldd,start anchor=center] \ar[dr,start anchor=center] &                                       &  &    \\
      u_1R \ar[rd] &                     &                                                &                                                                       & \phantom{u_1'R}L=u_1'R \ar[ddll]      &  & vR \\
      & u_1R \cap I \ar[dr] &                                                &                                                                       &                                       &  &    \\
      &                     & J \ar[d]                                       &                                                                       &                                       &  &    \\
      &                     & \phantom{u_1wR=u_1'w'R=}K=u_1wR=u_1'w'R \ar[d] &                                                                       &                                       &  &    \\
      &                     & aR                                             &                                                                       &                                       &  &    \\
    \end{tikzcd}
    \caption{Construction in the proof of \cref{thm-catenary}.
    The inclusion $aR \subset vR$ is not shown.}
  \end{figure}
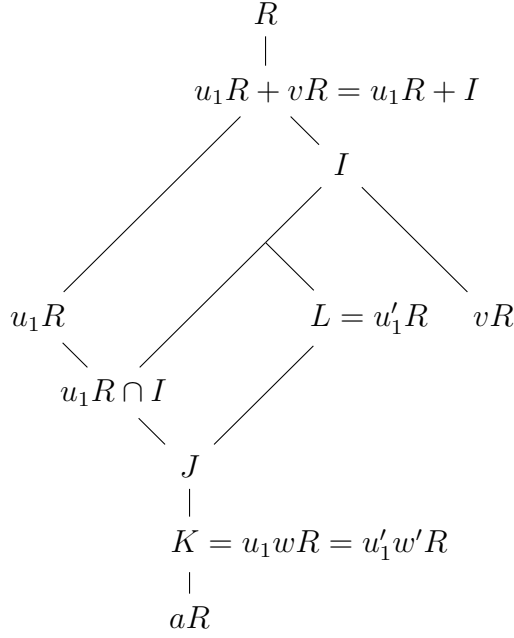

  Let $I$ be a right $R$-ideal which is maximal with respect to $vR \subset I \subsetneq u_1R + vR$.
  Since $u_1R+I = u_1R + vR$, it follows that $u_1R/(u_1R \cap I)\cong (u_1R + vR)/I$.
  In particular, $u_1R/(u_1R \cap I)$ is simple.
  Let $T$ denote the tower containing $(u_1R/(u_1R \cap I))$.
  By \cref{l-tower-complete}, there exists a right $R$-ideal $J$ with $aR \subset J \subset u_1R \cap I$ such that $(u_1R/J) = (T)$.
  Hence $(T)$ is contained in $(u_1R/aR)=(R/u_2\cdots u_kR)$.
  Since $(R/u_jR) \in \famon{\towers{R}}$ for each $j \in [2,k]$, there exists an $i \in [2,k]$ such that $(T)$ is contained in $(R/u_iR)$.
  From now on, we fix such an index $i$.
  Let $(R/u_iR) = (T) + (T_2) + \cdots + (T_m)$ with towers $T_2$, $\ldots\,$,~$T_m$.
  By \subref{l-towerlift:weak-upper}, there exists $aR \subset K \subset J$ such that $(J/K) = (T_2') + \cdots + (T_m')$ with towers $T_2'$, $\ldots\,$,~$T_m'$ satisfying $\langle T_j'\rangle = \langle T_j \rangle$ for all $j \in [2,m]$.
  Then $(u_1R/K) = (T) + (T_2') + \cdots + (T_m')$ with $\langle T \rangle + \langle T_2'\rangle + \cdots + \langle T_m' \rangle = \langle R/u_iR \rangle = \mathbf 0$.
  By \cref{l-free}, the right $R$-ideal $K$ is principal, say $K=u_1wR$ with $w \in R^\bullet$.
  Then $\overline\theta(w) = \overline\theta(u_i)$.

  Now we construct a second factorization of $u_1w$.
  First we prove the following intermediate claim.
  \begin{claim}
    There exists a right $R$-ideal $L$ with $J \subset L \subset I$ such that $(L/J)=(T')$ with a tower $T'$ and $\langle T'\rangle = \langle T \rangle$.
  \end{claim}

  \begin{proof}[Proof of Claim]
  \emph{Case 1:} Suppose $(u_1R+vR/vR)$ does not contain all of $(T)$.
  Then $T$ is necessarily non-trivial, and hence a cycle tower due to \labelcref{c-ft}.
  Let $W=u_1R+vR/I$.
  Let $V$ be a simple module of $T$ whose class does not appear in $(u_1R+vR/vR)$.
  Then, since $(R/u_1R)$, $(R/vR) \in \famon{\towers{R}}$,
  \[
  \begin{split}
    \val_{T}(R/u_1R) & = \val_{V}(R/u_1R)                                          \\
                     & \ge \val_{V}(R/(u_1R+vR)) \\
                     & = \val_{V}(R/vR) = \val_{T}(R/vR) \\
                     & = \val_{W}(R/(u_1R+vR)) + \val_{W}((u_1R+vR)/vR)            \\
                     & \ge \val_{W}(R/(u_1R+vR)) + 1.
  \end{split}
  \]
  Therefore, $\val_{W}(R/u_1R)=\val_{T}(R/u_1R) \ge \val_{W}(R/(u_1R+vR)) +1$.
  It follows that $\val_{W}((u_1R+vR)/u_1R) \ge 1$.
  In other words, $W$ appears as a composition factor of $(u_1R+vR)/u_1R$.

  Since $(u_1R+vR)/u_1R\cong I/(u_1R \cap I)$, the module $W$ also appears as a composition factor of $I/(u_1R \cap I)$.
  By construction of $J$, we have $(u_1R \cap I/ J) = (T) - (W)$, and thus $(I/J)$ contains the full tower $(T)$.
  By \subref{l-towerlift:strong}, there exists a right $R$-ideal $L$ such that $J \subset L \subset I$ and $(L/J)=(T)$.
  The claim holds with $T'=T$.

  \emph{Case 2:} Suppose $(u_1R+vR/vR)$ does contain all of $(T)$.
  Since $\overline\theta(u_1)=\overline\theta(v)$, there exists a tower $T''$ such that $(T'')$ is contained in $(u_1R+vR/u_1R)$ and $\langle T''\rangle = \langle T \rangle$.
  Hence $(T'')$ is also contained in $(I/u_1R \cap I)$ and hence in $(I/J)$.
  By \subref{l-towerlift:weak}, there exists $J \subset L \subset I$ such that $(L/J)=(T')$ for a tower $T'$ with $\langle T'\rangle = \langle T''\rangle =\langle T \rangle$.
  \end{proof}

  Now $(L/K) = (L/J)+(J/K)=(T') + (T_2') + \cdots + (T_m')$.
  Since we already know that $K$ is principal, \cref{l-free} implies that $L$ is principal, say $L=u_1'R$ with $u_1' \in R^\bullet$.
  Choosing $w' \in R^\bullet$ such that $u_1w=u_1'w'$, we have
  \[
  \overline\theta(w')=\langle T' \rangle \langle T_2'\rangle \cdots \langle T_m'\rangle = \langle T \rangle \langle T_2'\rangle \cdots \langle T_m'\rangle = \overline\theta(w)=\overline\theta(u_i).
  \]
  Then also $\overline\theta(u_1')=\overline\theta(u_1)$.
  Due to $u_1'R + vR \subset I \subsetneq u_1R + vR$, we have $\delta(u_1',v) < \delta(u_1,v)$.
  Thus, the conditions of \cref{l-weave} are satisfied and $\sc_\sd(R^\bullet,\overline\theta) \le 2$.

  If \labelcref{c-fos} holds, one shows $\sc_\sd(R^\bullet,\theta) \le 2$ analogously, using \labelcref{l-towerlift:strong} and \labelcref{l-towerlift:strong-upper} of \cref{l-towerlift} instead of \labelcref{l-towerlift:weak} and \labelcref{l-towerlift:weak-upper} to achieve $T'=T$ and $T_i'=T_i$ for $i \in [2,m]$.
\end{proof}

Under reasonable conditions, we are able to characterize when $R$ is $\sd$-factorial for $\sd$ any of the distances $\sd_{\textup{cs}}$, $\sd_{\textup{sim}}$, and $\sd^*$.
First we need a lemma.

\begin{lemma} \label{l-split}
  Let $T$ be a cycle tower and let $M$ be a module of finite length all of whose composition factors belong to $T$.
  Let $N \subset M$ be a submodule.
  If either $N$ is uniserial of length the socle-height of $M$, or $M/N$ is uniserial of length the socle-height of $M$, then the canonical sequence $\mathbf 0 \to N \to M \to M/N \to \mathbf 0$ splits.
\end{lemma}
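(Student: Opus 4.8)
The plan is to reduce to the structure theory of finite-length modules with composition factors in a cycle tower: decompose $M$ into uniserial summands, and then, in each of the two cases, exploit the uniseriality of $N$ (respectively of $M/N$) to single out one summand along which the canonical sequence visibly splits.

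First I would set up two preliminary facts. Since every composition factor of $M$ belongs to the cycle tower $T$, the canonical decomposition $M = F \oplus C$ of \cite[Theorem 41.1]{levy-robson11} has $F = \mathbf 0$, so $M = U_1 \oplus \cdots \oplus U_l$ with each $U_i$ indecomposable, hence uniserial by \cite[Theorem 41.2]{levy-robson11}; we may assume each $U_i \ne \mathbf 0$. Next, the socle-height of $M$ equals $\max_i \length{U_i}$, because the length of the socle series of a direct sum is the maximum of the lengths of the socle series of the summands, and for a uniserial module this length coincides with the composition length. Write $h$ for this common value; in both cases of the lemma the hypothesis is that $N$, respectively $M/N$, is uniserial of length $h = \max_i \length{U_i}$. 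Let $p_i \colon M \to U_i$ denote the canonical projections and $q \colon M \to M/N$ the canonical epimorphism.

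Assume first that $M/N$ is uniserial with $\length{M/N} = h$. The images $q(U_i)$ are submodules of the uniserial module $M/N$, hence are totally ordered by inclusion, and their sum equals $q(M) = M/N$; thus $q(U_i) = M/N$ for some $i$. Surjectivity of $q|_{U_i} \colon U_i \to M/N$ forces $\length{U_i} \ge h$, and together with $\length{U_i} \le h$ this yields $\length{U_i} = h$, so $q|_{U_i}$ is an isomorphism. Then the inclusion $U_i \hookrightarrow M$ composed with $(q|_{U_i})^{-1}$ is a section of $q$, and the sequence $\mathbf 0 \to N \to M \to M/N \to \mathbf 0$ splits.

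Now assume $N$ is uniserial with $\length N = h$. Since $\bigcap_i \ker p_i = \mathbf 0$, also $\bigcap_i (N \cap \ker p_i) = \mathbf 0$; but the submodules $N \cap \ker p_i$ of the uniserial module $N$ are totally ordered, so the smallest of them equals $\mathbf 0$, that is, $N \cap \ker p_i = \mathbf 0$ for some $i$. Hence $p_i|_N \colon N \to U_i$ is injective, which gives $\length N \le \length{U_i} \le h = \length N$, so $p_i|_N$ is an isomorphism; composing $p_i$ with $(p_i|_N)^{-1}$ produces a retraction $M \to N$ of the inclusion, and again the canonical sequence splits. The only genuinely substantial input is the pair of preliminary facts — that $M$ is a direct sum of uniserials (which is precisely where the cycle-tower hypothesis is used, through the Levy--Robson structure theory) and the computation of the socle-height; once these are available each case is an elementary length count, so I do not foresee a real obstacle.
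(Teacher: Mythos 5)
Your proof is correct, and it takes a genuinely different route from the paper. The paper passes to the Artinian serial ring $\Lambda=R/J^m$ (where $J$ is the intersection of the annihilators of the simple modules in $T$ and $m$ is the socle-height), identifies the uniserial module of length $m$ with $e\Lambda$ for a primitive idempotent $e$, and then quotes the serial-ring facts that such an $e\Lambda$ is injective (when it plays the role of $N$) respectively projective (when it plays the role of $M/N$), so the sequence splits in either case. You instead decompose $M$ into uniserial direct summands via the Levy--Robson structure theorems 41.1/41.2 and argue by elementary length counting: the socle-height of $M$ is the maximal length of a uniserial summand; in the quotient case the images of the summands in the uniserial module $M/N$ are totally ordered, so one summand surjects onto $M/N$ and, having length at most the socle-height, maps isomorphically, giving a section; in the submodule case the kernels $N\cap\ker p_i$ inside the uniserial $N$ are totally ordered with zero intersection, so some projection restricts to an injection $N\to U_i$, which by the same length bound is an isomorphism and yields a retraction. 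Both arguments use the cycle-tower hypothesis only through Levy--Robson structure theory, but in different places: the paper needs the two-sided ideal $J$ and the \S 50 results on Artinian serial rings, and in return gets the projectivity/injectivity of the extremal uniserials for free; your argument is more self-contained and elementary, resting only on the direct-sum-into-uniserials decomposition that the paper already invokes elsewhere (e.g.\ before \cref{l-int-mod}), at the cost of doing the splitting by hand. One small remark: your appeal to Theorem 41.1 is not really needed, since all composition factors already lie in the single cycle tower $T$, so Theorem 41.2 applies directly to the indecomposable summands.
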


\begin{proof}
  Let $m$ be the socle-height of $M$.
  Let $W_1$, $\ldots\,$,~$W_n$ be a set of unfaithful simple modules representing $T$ and let $J=\smash\bigcap_{i=1}^n \ann_R(W_i)$.
  Then $J^m$ annihilates $M$, and hence $M$, $N$, and $M/N$ are modules over the Artinian serial ring $\Lambda=R/J^m$.
  Moreover, $\overline J = J/J^m$ is the Jacobson radical of $\Lambda$ as a consequence of \cite[Lemma 22.8]{levy-robson11}.
  Note that $m$ is the index of nilpotence of $\overline J$, as we have $J^{m-1} \supsetneq J^m$ due to \cite[Proposition 22.9 and Lemma 22.14]{levy-robson11}.

  Suppose first that $N$ is uniserial of length $m$.
  By \cite[Lemma 50.11(iv)]{levy-robson11}, there exists a primitive idempotent $e$ of $\Lambda$ such that $N \cong e\Lambda/e\overline{J}^m=e\Lambda$.
  Since the composition length of $e\Lambda$ equals the index of nilpotence of $\overline J$, the right ideal $e\Lambda$ is an injective $\Lambda$-module by \cite[Corollary 50.18]{levy-robson11}.
  Thus, the stated short exact sequence splits over $\Lambda$ and hence also over $R$.

  Suppose now that $M/N$ is uniserial and the length of $M/N$ is equal to $m$.
  Analogous to the previous case, $M/N \cong e\Lambda$ for a primitive idempotent $e$ of $\Lambda$.
  Hence, $M/N$ is a projective $\Lambda$-module and the sequence splits.
\end{proof}

\begin{prop} \label{p-fact}
  Let $R$ be as in \cref{thm-transfer}.
  Suppose further that $\cgrpm{R}=\cgrp{R}$, and that, if $\cgrp{R}\cong\mathsf C_2$, there exist at least two distinct towers $T_1$ and $T_2$ with $\langle T_1\rangle = \langle T_2 \rangle \ne \mathbf 0$.
  Then
  \begin{enumerate}
    \item\label{p-fact:cs} $R^\bullet$ is composition series factorial if and only if $\cgrp{R}=\mathbf 0$.
      Otherwise, $\sc_{\textup{cs}}(R^\bullet) \ge 2$.
    \item\label{p-fact:sim} $R^\bullet$ is similarity factorial if and only if $R$ is a principal ideal ring.
      Otherwise, $\sc_{\textup{sim}}(R^\bullet) \ge 2$.
    \item\label{p-fact:rigid} $R^\bullet$ is rigidly factorial if and only if $R$ is a local principal ideal ring.
      Otherwise, $\sc^*(R^\bullet) \ge 2$.
  \end{enumerate}
\end{prop}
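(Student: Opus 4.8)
The plan is to prove each of the three equivalences by establishing the ``if'' direction directly and the ``only if'' direction by contraposition, and to deduce the catenary lower bounds either from the general fact that these catenary degrees cannot equal $1$ (for $\sd_{\textup{cs}}$ and $\sd^*$) or, for $\sd_{\textup{sim}}$, by exhibiting an explicit element whose catenary degree is $\ge 2$.

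\smallskip\noindent\emph{Reductions and the ``if'' directions.}
Since $R/uR\cong R/vR$ implies that $R/uR$ and $R/vR$ have equivalent composition series, and since the rigid distance is the finest of the three, we have $\sd_{\textup{cs}}\le\sd_{\textup{sim}}\le\sd^*$ on $R^\bullet$, whence rigidly factorial $\Rightarrow$ similarity factorial $\Rightarrow$ composition series factorial and $\sc_{\textup{cs}}(R^\bullet)\le\sc_{\textup{sim}}(R^\bullet)\le\sc^*(R^\bullet)$. If $\cgrp R=\mathbf 0$ then $\cgrpm R=\mathbf 0$, so by \cref{thm-transfer} every atom $u$ satisfies $(R/uR)=(T)$ for a single tower $T$; as $\famon{\towers R}$ is free, the multiset of towers occurring in $(R/aR)$ is uniquely determined, so any two factorizations of $a$ have the same multiset of classes $(R/u_iR)$ in $K_0\modfl(R)$, hence $\sd_{\textup{cs}}$-distance $0$; this proves the ``if'' direction of part~\labelcref{p-fact:cs}. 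If $R$ is a principal ideal ring, every right $R$-ideal is principal, so atoms correspond to maximal right ideals and factorizations of $a$ to composition series of $R/aR$; the Jordan–Hölder theorem then gives $\sd_{\textup{sim}}$-distance $0$ between any two, proving the ``if'' direction of part~\labelcref{p-fact:sim}. If $R$ is in addition local, its radical equals $\pi R=R\pi$ for some $\pi$ and every element of $R^\bullet$ is $\varepsilon\pi^n$ with $\varepsilon$ a unit (using $\bigcap_n\pi^nR=\mathbf 0$), so $R^\bullet$ has exactly one rigid factorization per element, proving the ``if'' direction of part~\labelcref{p-fact:rigid}.

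\smallskip\noindent\emph{The ``only if'' direction of part~\labelcref{p-fact:cs}.}
I first note that $\sd(z,z')\ne 1$ whenever $z,z'$ are factorizations of the \emph{same} element, for $\sd\in\{\sd_{\textup{cs}},\sd^*\}$: for $\sd_{\textup{cs}}$ because two such factorizations, having equal total class in $K_0\modfl(R)$, cannot differ in exactly one composition-series class; for $\sd^*$ because distinct factorizations of equal length differ in at least two atoms (cancellation), while unequal lengths would force inserting or deleting a unit. Hence for these distances ``not factorial'' already yields catenary degree $\ge 2$. Now assume $\cgrp R=\cgrpm R\ne\mathbf 0$. If $\cgrp R$ has an element $g$ of order $n\ge 3$, then in $\cB(\cgrpm R)$ we have $g^n\cdot(-g)^n=(g\cdot(-g))^n$, and lifting through the transfer homomorphism $\overline\theta$ of \cref{thm-transfer} gives $a\in R^\bullet$ with factorizations of lengths $2$ and $n$. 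If $\cgrp R$ has exponent $2$ and order $\ge 4$, pick independent order-$2$ elements $g,h$; then $g^2h^2(g+h)^2=(g\cdot h\cdot(g+h))^2$ gives $a$ with factorizations of lengths $3$ and $2$. If $\cgrp R\cong\mathsf C_2$, we invoke the extra hypothesis to fix distinct towers $T_1\ne T_2$ with $\langle T_1\rangle=\langle T_2\rangle=g$; using \cref{l-k0-cycgen}, \cref{l-towerlift}, \cref{l-free} and the structure theory of finite-length modules, one constructs $a\in R^\bullet$ with $(R/aR)=2(T_1)+2(T_2)$ admitting both a factorization $\rf{u_1,u_2}$ with $(R/u_1R)=2(T_1)$, $(R/u_2R)=2(T_2)$ and one $\rf{u_1',u_2'}$ with $(R/u_i'R)=(T_1)+(T_2)$; since $(T_1)$ and $(T_2)$ are $\bZ$-independent in $K_0\modfl(R)$ these are $\sd_{\textup{cs}}$-inequivalent (at distance $2$), and as every factorization of $a$ has length $2$ with pairwise $\sd_{\textup{cs}}\in\{0,2\}$ we get $\sc_{\textup{cs}}(R^\bullet)\ge 2$. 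In each case $R^\bullet$ is not composition series factorial.

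\smallskip\noindent\emph{The ``only if'' directions of parts~\labelcref{p-fact:sim} and~\labelcref{p-fact:rigid}.}
For part~\labelcref{p-fact:sim}, suppose $R$ is not a principal ideal ring. If $\cgrp R\ne\mathbf 0$ we are done by part~\labelcref{p-fact:cs}. Otherwise $\cgrp R=\mathbf 0$, and then $R$ not being a principal ideal ring forces a non-trivial tower, necessarily a cycle tower $T$ of length $n\ge 2$ by \labelcref{c-ft}, with pairwise non-isomorphic simples $W_1,\dots,W_n$. Let $U_1,U_2$ be the uniserial modules with composition factors $(T)$ and tops $W_1$, resp.\ $W_2$ (\cref{l-ex-uniserial}); having no common quotient, $U_1\oplus U_2$ is cyclic, so $U_1\oplus U_2\cong R/aR$ with $aR$ principal. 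Splitting off $U_1\oplus 0$ yields a factorization with module-isomorphism multiset $\{U_1,U_2\}$ of two non-isomorphic uniserials, while splitting off $(\text{bottom }n{-}1\text{ of }U_1)\oplus(\text{bottom }1\text{ of }U_2)$ yields one in which both atoms' quotients are the \emph{decomposable} module $W_1\oplus(\text{uniserial with factors }W_2,\dots,W_n)$; these multisets differ, giving $\sd_{\textup{sim}}$-distance $2$, and a graph argument in $U_1\oplus U_2$ shows every factorization of $a$ has length $2$ with pairwise $\sd_{\textup{sim}}\in\{0,2\}$, so $\sc_{\textup{sim}}(R^\bullet)\ge 2$. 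For part~\labelcref{p-fact:rigid}, if $R$ is not a principal ideal ring we are done by part~\labelcref{p-fact:sim}; if $R$ is a principal ideal ring but not local it has two distinct maximal right ideals $\mathfrak m_1,\mathfrak m_2$, and for $a$ with $R/aR\cong R/\mathfrak m_1\oplus R/\mathfrak m_2$ the two coordinate breaks give rigid factorizations of opposite profile when $R/\mathfrak m_1\not\cong R/\mathfrak m_2$, resp.\ a family of distinct length-$2$ rigid factorizations indexed by $\End(V)$ when $R/\mathfrak m_1\cong R/\mathfrak m_2=V$; since distinct factorizations of the same element are at $\sd^*$-distance $\ge 2$, this gives $\sc^*(R^\bullet)\ge 2$ and $R^\bullet$ is not rigidly factorial.

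\smallskip\noindent\emph{Main obstacle.}
The routine parts are the ``if'' directions and the large-class-group cases of part~\labelcref{p-fact:cs}, which follow formally from the transfer homomorphism. The real work is the construction, in the $\mathsf C_2$ case of part~\labelcref{p-fact:cs} and in parts~\labelcref{p-fact:sim}–\labelcref{p-fact:rigid}, of an explicit element admitting two genuinely inequivalent factorizations, \emph{together with} the verification that no short chain joins them (so that the catenary degree is $\ge 2$, not merely positive); this rests on the structure theory of modules of finite length over HNP rings — their decomposition into uniserials and the combinatorics of \cref{lemma-comb}/\cref{l-int-mod} — and on the hypothesis that stably free right $R$-ideals are free, which is precisely what converts the required module of finite length into a principal ideal.
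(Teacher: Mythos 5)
Your route is essentially the paper's: dispose of $\card{\cgrp{R}}>2$ via the transfer homomorphism of \cref{thm-transfer}; in the $\mathsf C_2$ case build an element whose two factorizations have class profiles $\{2(T_1),2(T_2)\}$ and $\{(T_1)+(T_2),(T_1)+(T_2)\}$; for part~(2) with $\cgrp{R}=\mathbf 0$ use the two cyclically shifted uniserial modules $U_1,U_2$ over a non-trivial cycle tower and the principality supplied by ``stably free $\Rightarrow$ free''; and for parts~(1) and~(3) deduce the catenary bound from the observation that nonzero $\sd_{\textup{cs}}$- and $\sd^*$-distances between factorizations of one element are automatically $\ge 2$. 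All of that is correct and matches the paper's proof (you even supply the easy converse of part~(3), local PIR $\Rightarrow$ rigidly factorial, which the paper leaves implicit).

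There is, however, a genuine gap at the one place where that automatic ``$\ge 2$'' observation fails: the claim $\sc_{\textup{sim}}(R^\bullet)\ge 2$ in part~(2) when $\cgrp{R}=\mathbf 0$. Two length-two factorizations can be at similarity distance $1$ (they may share exactly one similarity class), so exhibiting two factorizations of $a$ at distance $2$ does not yet bound the catenary degree from below; one must rule out a bridging factorization of $a$ whose class multiset meets $\{U_1,U_2\}$ in exactly one element, the other class being the decomposable module $W_1\oplus(\text{uniserial with factors }W_2,\ldots,W_n)$. You assert this (``a graph argument in $U_1\oplus U_2$ shows every factorization of $a$ has length $2$ with pairwise $\sd_{\textup{sim}}\in\{0,2\}$'') without proof, and the tools you cite (\cref{lemma-comb}, \cref{l-int-mod}) do not yield it. In the paper this is precisely the role of \cref{l-split}: over the associated serial ring $R/J^m$, a uniserial submodule or quotient of $R/aR$ of length equal to the socle height is a direct summand; hence if one factor of a factorization of $a$ is isomorphic to $U_1$ or $U_2$ it splits off, and uniqueness of direct-sum decompositions of finite-length modules forces the other factor to be the complementary uniserial. (Your claim that \emph{all} pairwise distances lie in $\{0,2\}$ is moreover stronger than what is needed or proved in the paper.) Without \cref{l-split} or an equivalent splitting argument, the ``otherwise $\sc_{\textup{sim}}(R^\bullet)\ge 2$'' clause of part~(2) is not established; the rest of your argument stands.
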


\begin{proof}
  If $\card{\cgrp{R}} > 2$, then $\cB(\cgrp{R})$ is not half-factorial.
  Hence, neither is $R^\bullet$, due to the existence of the  transfer homomorphism $\overline\theta\colon R^\bullet \to \cB(\cgrp{R})$ established in \cref{thm-transfer}.
  In the following, it therefore suffices to consider the two cases where $\cgrp{R}$ is trivial and where $\cgrp{R} \cong \sC_2$.

  \ref*{p-fact:cs}
  If $\cgrp{R} \cong \sC_2$, a construction similar to the one in \cite[Lemma 7.4]{baeth-smertnig15} shows that $R^\bullet$ is not composition series factorial:
  Let $I_1$, $J_1 \subset R$ be right $R$-ideals with $(R/I_1)=(T_1)$ and $(R/J_1)=(T_2)$.
  Let $I_2 \subset I_1$ with $(I_1/I_2)=(T_1)$ and let $J_2 \subset J_1$ with $(J_1/J_2)=(T_2)$.
  Due to $T_1 \ne T_2$, we have $I_2 + J_2 = R$.
  Then $I_2$, $J_2$, $I_1 \cap J_1$, and $I_2 \cap J_2$ are principal.
  Two maximal chains of principal right $R$-ideals from $I_2 \cap J_2$ to $R$ are given by
  \[
  I_2 \cap J_2 \subsetneq I_2 \subsetneq R \quad\text{and}\quad I_2 \cap J_2 \subsetneq I_1 \cap J_1 \subsetneq R.
  \]
  Since $(R/I_2) = (T_1) + (T_1)$, $(I_2/(I_2 \cap J_2))=(T_2) + (T_2)$, and $(R/(I_1 \cap J_1))=((I_1\cap J_1)/(I_2\cap J_2))=(T_1) + (T_2)$, these two chains correspond to factorizations $z$ and $z'$ with $\sd_{\textup{cs}}(z,z') = 2$.
  Hence $R^\bullet$ is not composition series factorial.

  If $\cgrp{R} = \mathbf 0$, then every tower-maximal right $R$-ideal is principal and the atoms of $R^\bullet$ are precisely the elements $a \in R^\bullet$ for which $aR$ is tower-maximal.
  Hence $R^\bullet$ is composition series factorial.

  If $z$,~$z'$ are two factorizations of an element $a \in R^\bullet$ with $\sd_{\textup{cs}}(z,z') > 0$, then $\sd_{\textup{cs}}(z,z') \ge 2$ due to the uniqueness of composition series.
  Hence either $\sc_{\textup{cs}}(R^\bullet)=0$, which by definition is the case if and only if $R^\bullet$ is composition series factorial, or $\sc_{\textup{cs}}(R^\bullet) \ge 2$.

  \ref*{p-fact:sim}
  If $\cgrp{R} \cong \sC_2$, then $R^\bullet$ is not composition series factorial, and hence also not similarity factorial.
  In that case, $\sc_{\textup{sim}}(R^\bullet) \ge \sc_{\textup{cs}}(R^\bullet) \ge 2$.
  Let $\cgrp{R}=\mathbf 0$.
  If $R$ is a Dedekind prime ring, then $R$ is a principal ideal ring, and hence similarity factorial.
  Suppose conversely that $R$ is similarity factorial.
  We have to show that $R$ is a Dedekind prime ring, that is, all towers are trivial.

  Suppose that $T$ is a non-trivial tower.
  Due to property~\labelcref{c-ft}, the tower $T$ is a cycle tower.
  Let $W_1$, $\ldots\,$,~$W_n$ with $n \ge 2$ be pairwise non-isomorphic simple modules that represent $T$ such that $W_i$ is an unfaithful successor of $W_{i-1}$ for all $i \in [2,n]$ and $W_1$ is an unfaithful successor of $W_n$.
  By \cref{l-ex-uniserial}, there exist right $R$-ideals $I_1$ and $I_2$ such that $R/I_1$ and $R/I_2$ are uniserial with composition factors, from top to bottom, $W_1$, $\ldots\,$,~$W_n$, respectively, $W_2$, $\ldots\,$,~$W_n$,~$W_1$.
  Necessarily $I_1 + I_2 = R$ and hence $R/(I_1 \cap I_2) \cong R/I_1 \oplus R/I_2$.
  Since $(R/I_1 \cap I_2)=(T)+(T)$ and $\langle T \rangle = \mathbf 0$, the right $R$-ideal $I_1 \cap I_2$ is principal, say $I_1 \cap I_2 = aR$ for some $a \in R^\bullet$.
  Let $J_1$ be the unique maximal right $R$-ideal between $I_1$ and $R$, and let $J_2$ be the unique right $R$-ideal which is minimal with respect to properly containing $I_2$.
  Then $R/J_1 \cong W_1$ and $R/J_2$ is uniserial with composition factors $W_2$, $\ldots\,$,~$W_n$.
  Thus $R/(J_1 \cap J_2) \cong W_1 \oplus (R/J_2)$ is not uniserial, and hence isomorphic to neither $R/I_1$ nor $R/I_2$.
  Similarly, $(J_1 \cap J_2)/(I_1 \cap I_2) \cong (J_1/I_1) \oplus W_1$ is isomorphic to neither $R/I_1$ nor $R/I_2$.
  However, $J_1 \cap J_2$ is principal since $(R/(J_1 \cap J_2)) = (T)$.
  It follows that the chains
  \[
  aR=I_1 \cap I_2 \subset I_1 \subset R \quad\text{and}\quad aR=I_1 \cap I_2 \subset J_1 \cap J_2 \subset R
  \]
  correspond to factorizations of $a$ which have similarity distance at least $2$.

  To conclude $\sc_{\textup{sim}}(R^\bullet) \ge 2$, we show that $a$ does not have any factorization whose atoms have similarity classes $R/I_1$ and $W_1 \oplus (R/J_2)$, or $R/I_2$ and $W_1 \oplus (R/J_2)$.
  Suppose that $aR \subsetneq K \subsetneq R$ with $(R/K)=(T)$.
  If one of $R/K$ and $K/aR$ is isomorphic to $R/I_1$ or $R/I_2$, then it is a direct summand of $R/aR$ by \cref{l-split}.
  Uniqueness of direct sum decomposition of modules of finite length implies that the other summand must be isomorphic to $R/I_2$ or  $R/I_1$.
  This proves the claim.

  \ref*{p-fact:rigid}
  If $R$ is a local PID and $J(R)$ is its Jacobson radical, then $J(R)$ is the unique maximal right $R$-ideal and $J(R) = aR$ with $a \in R^\bullet \setminus R^\times$.
  From this it follows that the right $R$-ideals are linearly ordered, and hence $R$ is rigidly factorial.
 
  If $R^\bullet$ is rigidly factorial, then it is also similarity factorial, and hence $R$ is a principal ideal ring by \ref*{p-fact:sim}.
  For every $a \in R^\bullet$, there is a unique chain of right $R$-ideals between $aR$ and $R$.
  Hence, the right $R$-ideals must be linearly ordered by inclusion.
  It follows that there exists a unique maximal right ideal of $R$.
  Thus, $R$ is local.

  If $z$,~$z'$ are two rigid factorizations of $a \in R^\bullet$ with $z \ne z'$, then the definition of the rigid distance together with cancellativity are easily seen to imply $\sd^*(z,z') \ge 2$.
  Hence, either $\sc^*(R^\bullet)=0$ or $\sc^*(R^\bullet) \ge 2$.
\end{proof}

\begin{cor} \label{cor-cat}
   Let $R$ be as in the previous proposition.
   For $\sd$ any of $\sd_{\textup{cs}}$, $\sd_{\textup{sim}}$, or $\sd^*$,
   \[
   \sc_\sd(R^\bullet) = \max\big\{ \sc_p\big(\cB(\cgrp{R})\big), 2 \big\},
   \]
   unless $R^\bullet$ is $\sd$-factorial.
\end{cor}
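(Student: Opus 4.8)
The plan is to sandwich $\sc_\sd(R^\bullet)$ between the upper bound furnished by the transfer homomorphism and a matching lower bound. Recall that \cref{p-fact} assumes $\cgrpm R = \cgrp R$, so \cref{thm-transfer} provides a transfer homomorphism $\overline\theta\colon R^\bullet \to \cB(\cgrp R)$, and \cref{thm-catenary} gives $\sc_\sd(R^\bullet,\overline\theta) \le 2$ for every distance $\sd$ on $R^\bullet$. The bound on catenary degrees in \cref{t-transfer} then yields
\[
\sc_\sd(R^\bullet) \le \max\big\{\,\sc_p(\cB(\cgrp R)),\ \sc_\sd(R^\bullet,\overline\theta)\,\big\} \le \max\big\{\,\sc_p(\cB(\cgrp R)),\ 2\,\big\}.
\]

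For the lower bound I would establish, assuming $R^\bullet$ is not $\sd$-factorial, the two inequalities $\sc_\sd(R^\bullet) \ge 2$ and $\sc_\sd(R^\bullet) \ge \sc_p(\cB(\cgrp R))$ separately. The first is precisely the final assertion of each item of \cref{p-fact} for $\sd \in \{\sd_{\textup{cs}},\sd_{\textup{sim}},\sd^*\}$. The second rests on the observation that each of these three distances dominates the pullback of the permutable distance on the codomain: writing $\overline\theta$ also for the induced homomorphism $\sZ^*(R^\bullet) \to \sZ_p(\cB(\cgrp R)) = \famon{\cA(\cB(\cgrp R))}$, I claim
\[
\sd(z,z') \ge \sd_p\big(\overline\theta(z),\overline\theta(z')\big) \qquad \text{for all } z,z' \in \sZ^*(R^\bullet) \text{ with } \pi(z) = \pi(z').
\]
Indeed, if $u,v \in \cA(R^\bullet)$ with $R/uR \cong R/vR$, or more weakly with $R/uR$ and $R/vR$ having equivalent composition series, then $(R/uR) = (R/vR)$ in $K_0\modfl(R)$, hence $\theta(u) = \theta(v)$ and therefore $\overline\theta(u) = \overline\theta(v)$. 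Thus the identification of similar (respectively composition-equivalent) atoms of $R^\bullet$ refines, under $\overline\theta$, the identification of atoms of $\cB(\cgrp R)$, and the inequality for $\sd_{\textup{sim}}$ and $\sd_{\textup{cs}}$ follows from the construction of $\sd_\sim$ recalled above, since the induced homomorphism of free abelian monoids sends atoms to atoms and hence does not increase the associated distance $\sd_F$. For $\sd^*$ one additionally invokes $\sd^* \ge \sd_{\textup{sim}}$ (the rigid distance dominates every distance of the form $\sd_\sim$; see \cite[Construction 3.3]{baeth-smertnig15}).

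Granting the displayed inequality, $\sc_\sd(R^\bullet) \ge \sc_p(\cB(\cgrp R))$ follows by the standard lifting argument. Given $N < \sc_p(\cB(\cgrp R))$, choose $B \in \cB(\cgrp R)$ and two factorizations $Z,Z'$ of $B$ admitting no $N$-chain between them in $\sd_p$; pick $a \in R^\bullet$ with $\overline\theta(a) = B$ (possible since $\overline\theta$ is surjective) and, using that $\overline\theta$ is a transfer homomorphism, lift $Z$ and $Z'$ to factorizations $z,z' \in \sZ^*(a)$ with $\overline\theta(z) = Z$ and $\overline\theta(z') = Z'$. An $N$-chain between $z$ and $z'$ in $\sd$ would push forward under $\overline\theta$ to an $N$-chain between $Z$ and $Z'$ in $\sd_p$, a contradiction; hence $\sc_\sd(a) > N$, so $\sc_\sd(R^\bullet) > N$. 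Letting $N \to \sc_p(\cB(\cgrp R))$ — the same argument covers $\sc_p(\cB(\cgrp R)) = \infty$ — gives $\sc_\sd(R^\bullet) \ge \sc_p(\cB(\cgrp R))$. Combined with the two preceding bounds, this proves the asserted equality whenever $R^\bullet$ is not $\sd$-factorial. I expect the displayed domination inequality (together with a careful treatment of how the lifting argument interacts with the units of $R^\bullet$) to be the only genuinely technical point; everything else is a direct assembly of \cref{t-transfer}, \cref{thm-catenary}, and \cref{p-fact}.
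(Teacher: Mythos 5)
Your proposal is correct and follows essentially the same route as the paper: the upper bound via \cref{t-transfer}\labelcref{t-transfer:cat} combined with \cref{thm-catenary}, the bound $\sc_\sd(R^\bullet)\ge 2$ from \cref{p-fact}, and the bound $\sc_\sd(R^\bullet)\ge\sc_p\big(\cB(\cgrp{R})\big)$ by pushing chains forward under $\overline\theta$, using that atoms $u$, $v$ with $(R/uR)=(R/vR)$ have the same image. The paper merely compresses your domination argument into the single chain of inequalities $\sc_\sd(R^\bullet)\ge\sc_{\textup{cs}}(R^\bullet)\ge\sc_p\big(\cB(\cgrp{R})\big)$.
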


\begin{proof}
  By \cref{thm-transfer} we have $\sc_\sd(R^\bullet)  \le \max\{ \sc_p(\cB(\cgrp{R})), 2 \}$.
  Since two atoms $u$ and $v$ with $(R/uR)=(R/vR)$ are mapped to the same element under $\overline\theta$,
  \[
  \sc_\sd(R^\bullet) \ge \sc_{\textup{cs}}(R^\bullet) \ge \sc_p\big(\cB(\cgrp{R})\big).
  \]
  Finally, if $R^\bullet$ is not $\sd$-factorial, then $\sc_\sd(R^\bullet) \ge 2$ by \cref{p-fact}.
\end{proof}

\section{Bounded Dedekind prime rings}
\label{sec-dedekind}

We now restrict to the case in which $R$ is a bounded Dedekind prime ring.
In \cite{smertnig13} and \cite[\S7]{baeth-smertnig15}, it was shown that, under a sufficient condition, arithmetical maximal orders possess a transfer homomorphism to a monoid of zero-sum sequences.
This result can be applied to bounded Dedekind prime rings to yield \cref{t-dedekind-straight} below.
In this short section, we will see that the transfer homomorphism obtained in this way is the same as the one obtained in the previous section.

Let $\alpha$ denote the set of maximal orders in the quotient ring $\quo(R)$ that are equivalent to $R$.
Let $S \in \alpha$.
If $I$ is a (fractional) right $S$-ideal, then
\[
T=\cO_l(I)=\{\, q \in \quo(R) \mid qI \subset I \,\} \cong \End(I_S)
\]
is a maximal order equivalent to $S$; hence $T \in \alpha$ and $I$ is a (fractional) left $T$-ideal.
We say that $I$ is a \emph{\textup{(}fractional\textup{)} $(T,S)$-ideal} if it is a (fractional) right $S$-ideal and a (fractional) left $T$-ideal.
For every two maximal orders $S$,~$T \in \alpha$, there exists a $(T,S)$-ideal.

Let $G$ be the Brandt groupoid of left- or right-$S$-ideals, where $S \in \alpha$.
Thus $G$ is a small category with set of objects $\alpha$.
For $S$,~$T \in \alpha$, the morphisms from $S$ to $T$ are the fractional $(T,S)$-ideals, with composition given by multiplication of fractional one-sided ideals.
Every such fractional $(T,S)$-ideal $I$ is invertible (that is, an isomorphism); its inverse is
\[
I^{-1} = \{\, x \in \quo(R) \mid xI \subset S \,\} = \{\, x \in \quo(R) \mid IxI \subset I \,\} = \{\, x \in \quo(R) \mid Ix \subset T\,\}.
\]

For each $S \in \alpha$, the subgroup $G(S)$ of (two-sided) fractional $S$-ideals forms a free abelian group with basis the maximal $S$-ideals.
For $S$,~$T \in \alpha$ there exists a canonical isomorphism between $G(S)$ and $G(T)$: If $X$ is a fractional $(T,S)$-ideal, then $G(S) \to G(T)$, $I \mapsto XIX^{-1}$ is an isomorphism which is independent of $X$.
We identify all these groups by means of these canonical isomorphisms, denote the resulting group by $\bG$, and, for an $S$-ideal $I$, denote by $(I)$ its representative in $\bG$.

If $M$ is a maximal right $S$-ideal, and $\fP$ is the maximal $S$-ideal contained in $M$, we set $\eta(M) = (\fP) \in \bG$.
The map $\eta$ extends multiplicatively to a homomorphism $\eta\colon G \to \bG$, which we call the \emph{abstract norm}.
Let
\[
P_{R^\bullet} = \{\, \eta(Rq) \mid q \in \quo(R)^\times  \,\},
\]
let $C = \bG/P_{R^\bullet}$, and let
\[
C_M = \{\, [\eta(I)] \mid \text{$I$ is a maximal right $S$-ideal, $S \in \alpha$} \,\}.
\]

Straightforward application of the abstract results from \cite{smertnig13,baeth-smertnig15} to bounded Dedekind prime rings yields the following.
\begin{thm}[{\cite[Corollary 7.11]{baeth-smertnig15}}] \label{t-dedekind-straight}
  Let $R$ be a bounded Dedekind prime ring.
  Assume that a \textup{(}fractional\textup{)} right $R$-ideal $I$ is principal if and only if $\eta(I) \in P_{R^\bullet}$.
  Then there exists a transfer homomorphism $\theta\colon R^\bullet \to \cB(C_M)$.
  If $\sd$ is a distance on $R^\bullet$ that is invariant under conjugation by normalizing elements, then $\sc_\sd(R^\bullet,\theta) \le 2$.
\end{thm}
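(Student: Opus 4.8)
The plan is to obtain the statement as a direct application of the theory of \emph{arithmetical maximal orders} developed in \cite{smertnig13,baeth-smertnig15}: one checks that a bounded Dedekind prime ring $R$ satisfies the axioms required there, one identifies the abstract invariants occurring in the general transfer theorem with the concrete objects $\bG$, $\eta$, $P_{R^\bullet}$, $C$, and $C_M$ defined above, and then one invokes \cite[Corollary 7.11]{baeth-smertnig15}.

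First I would recall the ideal theory. Being a Dedekind prime ring, $R$ is a maximal order in the simple Artinian ring $\quo(R)$, and since $R$ is hereditary Noetherian every essential one-sided ideal is invertible, hence divisorial; thus the Brandt groupoid $G$ consists of all fractional one-sided ideals between the maximal orders equivalent to $R$. By Asano's theory of two-sided ideals, see \cite[\S22]{levy-robson11}, for each $S \in \alpha$ the group $G(S)$ of fractional two-sided $S$-ideals is free abelian on the maximal $S$-ideals, with the canonical identifications described above; together with boundedness --- which guarantees that every essential one-sided ideal contains a nonzero two-sided ideal --- this is exactly what makes $G$ an arithmetical groupoid and $R$ an arithmetical maximal order in the sense of \cite{smertnig13}, as was already observed in the introduction.

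Next I would match the data. The abstract norm of the general theory is precisely the homomorphism $\eta\colon G \to \bG$ constructed above from the maximal $S$-ideals, the distinguished normalizing submonoid of $\bG$ is $P_{R^\bullet} = \{\,\eta(Rq) \mid q \in \quo(R)^\times\,\}$, the abstract class group is $C = \bG/P_{R^\bullet}$, and the distinguished subset entering the transfer construction is $C_M$. Under these identifications the sufficient condition for the existence of a transfer homomorphism in \cite[Theorem 5.23]{smertnig13} and \cite[Corollary 7.11]{baeth-smertnig15} --- namely that a (fractional) right $R$-ideal is principal if and only if its class in $C$ is trivial, equivalently $\eta(I) \in P_{R^\bullet}$ --- is exactly the hypothesis of the theorem.

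With this bookkeeping done, \cite[Corollary 7.11]{baeth-smertnig15} yields the transfer homomorphism $\theta\colon R^\bullet \to \cB(C_M)$, and the catenary-degree-in-the-fibers bound of \cite[Proposition 7.7]{baeth-smertnig15} gives $\sc_\sd(R^\bullet,\theta) \le 2$ for every distance $\sd$ on $R^\bullet$ satisfying the invariance hypothesis stated there, which is precisely invariance under conjugation by normalizing elements. The only real content --- and hence the main obstacle --- is the verification in the first two paragraphs that bounded Dedekind prime rings fall under the axioms of an arithmetical maximal order and that the abstractly produced invariants $(\bG,\eta,P_{R^\bullet},C,C_M)$ coincide with the explicit ones defined above; once this identification is in place the theorem is an immediate citation.
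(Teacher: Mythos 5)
Your proposal is correct and follows exactly the route the paper takes: the paper states this result as a direct citation of \cite[Corollary 7.11]{baeth-smertnig15} (with the fiber bound from \cite[Proposition 7.7]{baeth-smertnig15}), after setting up the Brandt groupoid, the abstract norm $\eta$, $P_{R^\bullet}$, $C$, and $C_M$ precisely so that bounded Dedekind prime rings fall under the axioms of an arithmetical maximal order, just as you describe. The only difference is that the paper asserts the verification of the axioms as "straightforward application" rather than spelling it out, which you sketch slightly more explicitly.
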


To make effective use of this theorem, it is necessary to
\begin{enumerate*}
\item express $C$ and $C_M$ in terms of more familiar algebraic invariants, and
\item understand the meaning of the condition appearing in the theorem.
\end{enumerate*}

In the special case where $R$ is a classical maximal order (over a holomorphy ring) in a central simple algebra over a global field, $C=C_M$ is a ray class group, which is isomorphic to $\cgrp{R}\cong \pcg{R}$, and the condition in the theorem can be expressed as: every stably free right $R$-ideal is free. (See \cite{smertnig13}.)
We now show that a similar characterization (with $C\cong \cgrp{R}$) holds in arbitrary bounded Dedekind prime rings.

\begin{lemma} \label{maxe}
  Let $I$ and $J$ be maximal right $R$-ideals.
  The following statements are equivalent:
  \begin{equivenumerate}
  \item \label{maxe:norm}$\eta(I) = \eta(J)$,
  \item \label{maxe:ann} $\ann_R(R/I) = \ann_R(R/J)$,
  \item \label{maxe:quoiso} $R/I \cong R/J$.
  \end{equivenumerate}
\end{lemma}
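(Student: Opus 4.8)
The plan is to reduce all three conditions to a single equation between maximal two-sided ideals of $R$. Since $R$ is a bounded Dedekind prime ring, it is in particular a bounded HNP ring and a maximal order, so $R\in\alpha$. First I would record the key observation: for any maximal right $R$-ideal $L$, the module $R/L$ is unfaithful and simple, and $\eta(L) = (\ann_R(R/L))\in\bG$. (It is simple because a right ideal properly containing the essential right ideal $L$ is again essential, hence again a right $R$-ideal, hence equal to $R$.) Indeed, $L$ is an essential right ideal, so by boundedness it contains a nonzero two-sided ideal $A$; since $A$ is two-sided and $A\subseteq L$, we have $(R/L)A = \mathbf 0$, that is, $A\subseteq\ann_R(R/L)$, so $\ann_R(R/L)\ne\mathbf 0$ and $R/L$ is unfaithful. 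The same computation shows that $\ann_R(R/L)$ is the largest two-sided ideal contained in $L$. As recalled in \cref{sec-background}, $M_L := \ann_R(R/L)$ is then a maximal two-sided ideal of $R$ and $R/M_L$ is simple Artinian. Unwinding the definition of the abstract norm $\eta$ (with $S = R$), the maximal $R$-ideal contained in the maximal right ideal $L$ is precisely $M_L$, so $\eta(L) = (M_L)$.

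Granting this, \ref*{maxe:norm}${}\Leftrightarrow{}$\ref*{maxe:ann} is immediate. Writing $M_I = \ann_R(R/I)$ and $M_J = \ann_R(R/J)$, the equality $\eta(I) = \eta(J)$ reads $(M_I) = (M_J)$, which we may compute inside $G(R)\cong\bG$, the free abelian group on the maximal two-sided ideals of $R$. Since $M_I$ and $M_J$ are basis elements of this group, $(M_I) = (M_J)$ holds if and only if $M_I = M_J$, which is exactly \ref*{maxe:ann}.

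Finally, \ref*{maxe:quoiso}${}\Rightarrow{}$\ref*{maxe:ann} is trivial, since isomorphic modules have the same annihilator. For \ref*{maxe:ann}${}\Rightarrow{}$\ref*{maxe:quoiso}: if $M := M_I = M_J$, then $R/I$ and $R/J$ are both modules over $R/M$, and they are simple as $R/M$-modules because $R$-submodules and $R/M$-submodules coincide; since $R/M$ is simple Artinian, it admits a unique simple module up to isomorphism, whence $R/I\cong R/J$ over $R/M$ and hence over $R$. I do not expect a genuine obstacle; the two points requiring a little care are the use of boundedness to ensure $R/L$ is unfaithful (so that $\ann_R(R/L)$ is a maximal ideal with simple Artinian quotient) and the identification $\eta(L) = (\ann_R(R/L))$, which rests on $\ann_R(R/L)$ being simultaneously a maximal two-sided ideal and the largest two-sided ideal contained in $L$.
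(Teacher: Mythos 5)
Your proposal is correct and follows essentially the same route as the paper: the equivalence of (a) and (b) rests on the observation that $\ann_R(R/I)$ is precisely the maximal two-sided $R$-ideal contained in $I$ (so $\eta(I)$ is its class, and distinct basis elements of $\bG$ stay distinct), and (b) implies (c) because $R/I$ and $R/J$ are simple modules over the simple Artinian ring $R/\ann_R(R/I)$. You merely spell out the boundedness/unfaithfulness details that the paper leaves implicit.
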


\begin{proof}
  \ref*{maxe:norm}${}\Leftrightarrow{}$\ref*{maxe:ann}: The annihilator of $R/I$ is the maximal $R$-ideal contained in $I$.

  \ref*{maxe:ann}${}\Rightarrow{}$\ref*{maxe:quoiso}:
  Let $\fP = \ann_R(R/I)$.
  Then $R/I$ and $R/J$ are simple modules over the simple Artinian ring $R/\fP$.
  Hence $R/I \cong R/J$ over $R/\fP$ and thus also over $R$.

  \ref*{maxe:quoiso}${}\Rightarrow{}$\ref*{maxe:ann}: Clear.
\end{proof}

As a generalization of the equivalence of \ref*{maxe:norm} and \ref*{maxe:quoiso} we immediately obtain the following.

\begin{lemma} \label{l-dedekind-classiso}
  There exists an isomorphism $K_0\modfl(R) \to \bG$ which maps $(V)$ to $(\ann(V))$ if $V$ is a simple module.
  This induces an isomorphism
  \[
  \cgrp{R}=K_0\modfl(R)/\qcp{R} \cong \bG/P_{R^\bullet} = C.
  \]
\end{lemma}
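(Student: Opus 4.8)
The plan is to obtain the isomorphism on the level of free generators and then descend to the quotient. Since $R$ is a bounded Dedekind prime ring, all towers are trivial, so $\cgrp{R} = \fagrp{\towers{R}}/\qcp{R} = K_0\modfl(R)/\qcp{R}$, and by the Jordan--Hölder theorem $K_0\modfl(R) = \fagrp{\simple{R}}$ is free abelian on the isomorphism classes of simple modules. On the other side $\bG$ is, by the canonical identifications, the free abelian group $G(R)$ on the maximal two-sided $R$-ideals. First I would check that $V \mapsto \ann_R(V)$ is a bijection from $\simple{R}$ onto the set of maximal $R$-ideals: for a simple module $V$ the annihilator $\ann_R(V)$ is a primitive, hence prime, ideal, and it is nonzero, for otherwise $R$ would be a prime ring possessing a faithful simple module and thus simple Artinian, contrary to our standing assumption; since a nonzero prime of an HNP ring is maximal, $R/\ann_R(V)$ is simple Artinian. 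Injectivity on isomorphism classes is the implication \ref*{maxe:ann}${}\Leftrightarrow{}$\ref*{maxe:quoiso} of \cref{maxe}, and surjectivity is clear because any maximal $R$-ideal $\fP$ is the annihilator of a simple $R/\fP$-module. This bijection of bases extends uniquely to an isomorphism of free abelian groups
\[
\Lambda\colon K_0\modfl(R) \isomto G(R) = \bG, \qquad (V) \mapsto (\ann_R V).
\]
Reading off a composition series, $\Lambda\big((R/M)\big) = \eta(M)$ for a maximal right $R$-ideal $M$, and more generally $\Lambda\big((R/I)\big) = \eta(I)$ for every right $R$-ideal $I$, since a composition series of $R/I$ corresponds to a factorization of $I$ into maximal one-sided ideals of orders equivalent to $R$ and $\eta$ is multiplicative.

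Next I would show that $\Lambda$ carries $\qcp{R}$ onto $P_{R^\bullet}$, which yields the descent to the asserted isomorphism $\cgrp{R} = K_0\modfl(R)/\qcp{R} \isomto \bG/P_{R^\bullet} = C$. By definition $\qcp{R}$ is generated by the classes $(R/aR)$ with $a \in R^\bullet$, so $\Lambda(\qcp{R})$ is the subgroup of $\bG$ generated by the elements $\eta(aR)$. Using the transpose (cf.\ \cite[\S17]{levy-robson11}: the modules $R/aR$ and $R/Ra$ are transposes, and the transpose preserves the annihilators of composition factors), one has $\eta(aR) = \eta(Ra)$ and likewise $\eta(qR) = \eta(Rq)$ for all $q \in \quo(R)^\times$; hence $P_{R^\bullet} = \{\, \eta(qR) \mid q \in \quo(R)^\times \,\}$ is precisely the group of norms of principal fractional \emph{right} $R$-ideals. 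Finally, writing an arbitrary $q \in \quo(R)^\times$ as $q = ab^{-1}$ with $a$,~$b \in R^\bullet$ (Ore condition) and using multiplicativity of $\eta$ gives $\eta(qR) = \eta(aR)\eta(bR)^{-1}$, so this group of norms coincides with the subgroup generated by the $\eta(aR)$, $a \in R^\bullet$. Therefore $\Lambda(\qcp{R}) = P_{R^\bullet}$, and the induced map on quotients is the required isomorphism $\cgrp{R} \cong C$; by construction it sends the class of $(V)$ in $K_0\modfl(R)/\qcp{R}$ to the class of $(\ann_R V)$ in $\bG/P_{R^\bullet}$.

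The routine points --- that $\ann_R V$ is a maximal two-sided ideal and that $V \mapsto \ann_R V$ is a bijection of bases --- follow directly from \cref{maxe} and the structure of HNP rings. The one step genuinely drawing on the ambient Asano ideal theory is the identification of $P_{R^\bullet}$ with the group generated by the $\eta(aR)$: this rests on the left/right symmetry of the abstract norm ($\eta(aR) = \eta(Ra)$ via the transpose) and on enough multiplicativity of $\eta$ across conjugate orders to reduce $\eta(Rq)$ for $q = ab^{-1}$ to $\eta(aR)\eta(bR)^{-1}$ (which also secures that $P_{R^\bullet}$ is a subgroup of $\bG$). I would cite \cite[\S17, \S22]{levy-robson11} and \cite[\S7]{baeth-smertnig15} for these facts rather than reprove them here.
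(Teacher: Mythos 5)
Your proposal follows essentially the same route as the paper: use \cref{maxe} to get the annihilator bijection between isomorphism classes of simple modules and maximal $R$-ideals, extend it to an isomorphism of the free abelian groups $K_0\modfl(R)\to\bG$, observe that $(R/I)\mapsto\eta(I)$ for right $R$-ideals, and conclude that $\qcp{R}$ is carried onto $P_{R^\bullet}$. The extra care you take in identifying $P_{R^\bullet}=\{\,\eta(Rq)\mid q\in\quo(R)^\times\,\}$ with the quotient group of $\{\,\eta(aR)\mid a\in R^\bullet\,\}$ (via the left/right symmetry of $\eta$ and the Ore condition) is welcome; the paper asserts this identification without comment, so your argument fills in a step rather than diverging from it.

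One justification is wrong as stated, though the fact it supports is true and easily repaired. You claim $\ann_R(V)\ne\mathbf 0$ because ``otherwise $R$ would be a prime ring possessing a faithful simple module and thus simple Artinian.'' A prime (even HNP) ring with a faithful simple module is merely primitive, not simple Artinian: the Weyl algebra $A_1(K)$ is a non-Artinian simple HNP domain all of whose simple modules are faithful. The correct argument uses boundedness, which is precisely the standing hypothesis in this section: since $R$ is not Artinian it has no simple right ideal, so every maximal right ideal $M$ is essential, hence by boundedness contains a nonzero two-sided ideal, and therefore $R/M$ is unfaithful and $\ann_R(R/M)$ is a nonzero prime, hence maximal, ideal. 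With that one-line fix your proof is complete and matches the paper's.
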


\begin{proof}
  We have $K_0\modfl(R)=\fagrp{\simple{R}}$, and $\bG$ is isomorphic to the free abelian group on maximal $R$-ideals.
  By \cref{maxe}, there is a bijection between the set of isomorphism classes of simple $R$-modules and the set of maximal $R$-ideals.
  Hence, there exists an isomorphism $\varphi\colon K_0\modfl(R) \to \bG$ as claimed.

  If $I$ is a right $R$-ideal, then $\varphi((R/I)) = \eta(I)$.
  In particular $\varphi((R/aR)) = \eta(aR)$ for all $a \in R^\bullet$.
  Since $\qcp{R}$ is the quotient group of $\{\, (R/aR) \mid a \in R^\bullet \,\} \subset K_0\modfl(R)$ and $P_{R^\bullet}$ is the quotient group of $\{\, aR \mid a \in R^\bullet \,\} \subset \bG$, the group $\qcp{R}$ is mapped bijectively to $P_{R^\bullet}$ by $\varphi$.
\end{proof}

Thus, we see that, in the case of bounded Dedekind prime rings, the transfer homomorphism from \cite{smertnig13,baeth-smertnig15} is the same as $\overline\theta$, constructed in \cref{thm-transfer}.
Moreover, for a (fractional) right $R$-ideal $I$ we have $\eta(I) \in P_{R^\bullet}$ if and only if $\stzcls{I}=\stzcls{R}$, that is, $I$ is stably free.

\begin{remark}
  \begin{enumerate}
  \item
    If $R$ is a commutative Dedekind domain, then the groupoid $G$ has a single object, $R$, and its endomorphisms are the fractional $R$-ideals.
    Thus, $G$ is simply the group of fractional $R$-ideals (that is, the nonzero fractional ideals of $R$).
    Since every left or right $R$-ideal is two-sided, $\eta$ is also trivial and we may identify $G = \bG$.
    The group $C=\bG/P_{R^\bullet}$ is therefore the ideal class group of $R$ as it is traditionally defined.
    \cref{l-dedekind-classiso} shows that $C$ coincides with $\cgrp{R}$.
    From \cref{t-k0-iso-pcg} we recover the well-known statement $K_0(R) \cong C \oplus \bZ$.

  \item
    Under the isomorphism $\cgrp{R} \cong \bG/P_{R^\bullet}$, the expression $\langle I / J \rangle$, used in \cref{sec-class-groups}, corresponds to $[\eta(JI^{-1})]$.
    Since $\eta$ is a homomorphism, this gives another way of justifying \cref{l-symprop} in the case of bounded Dedekind prime rings.
  \end{enumerate}
\end{remark}

\section{Some examples}
\label{sec-examples}

In this final section, we give some examples complementing the main results.
In \cref{sec-transfer-hom} we have made use of two sufficient conditions \labelcref{c-ft,c-fo}, which are always satisfied if $R$ is bounded.
Likely these conditions are not necessary for the existence of a transfer homomorphism.
However, based on the proof, it seems quite natural to impose condition \labelcref{c-fo}, as the existence of non-trivial extensions between simple modules of towers of different classes presents an obvious obstacle to the construction of a transfer homomorphism.
Condition \labelcref{c-ft} seems less natural at first.
However, in \cref{p-nonhf,e-no-ft}, we show that an HNP ring with a single non-trivial tower, which is faithful of length $2$, need not be half-factorial, even if every stably free right ideal is free and $\pcg{R}=\cgrp{R}=\mathbf 0$.
In particular, for such a ring, there does not exist a (weak) transfer homomorphism from $R^\bullet$ to $\cB(\cgrp{R})$.
This shows that \labelcref{c-ft} cannot simply be dispensed with.

If $H$ is an atomic monoid, recall that $\rho_2(H)$ is the supremum over all $k \in \bN$ such that there exist atoms $u_1$, $u_2$, $v_1$, $\ldots\,$,~$v_k \in H$ with
\[
u_1u_2 = v_1\cdots v_k.
\]
Clearly, if $\rho_2(H) > 2$, then $H$ is not half-factorial.

Let $K$ be a field of characteristic $0$, and let $A=A_1(K)=K[x][y;\frac{d}{dx}]$ be the first Weyl algebra over $K$.
That is, $A$ is a $K$-algebra generated by $x$ and $y$ subject to $xy-yx=1$.
Then $A$ is a simple Dedekind domain, in other words, a simple HNP domain all of whose towers are trivial.
It is well known that $\pcg{A}$ is trivial, but $A$ has non-free stably free right $A$-ideals.
In terms of factorizations, this exhibits itself in $A^\bullet$ not being half-factorial; the well known example
\begin{equation}\label{eq-weyl}
x^2y=(1+xy)x,
\end{equation}
with all the factors being atoms of $A^\bullet$, shows $\rho_2(A^\bullet) \ge 3$.
However, the matrix ring $M_2(A)$ also has $\pcg{M_2(A)}=\mathbf 0$ by Morita equivalence and, since $\udim(M_2(A))=2$, every stably free right $M_2(A)$-ideal is free.
Therefore $M_2(A)$ is a principal ideal ring, and hence $M_2(A)^\bullet$ is similarity factorial.

The computations for the following examples are given at the end of the section.

\begin{exm} \label{e-emb-factor}
  Embedding the example from \cref{eq-weyl} into $M_2(A)^\bullet$ by writing the elements into the first coordinate, we see that $1+xy$ factors as a product of two atoms
  \[
  \begin{bmatrix}
    1+xy & 0 \\
    0      & 1
  \end{bmatrix}
  =
  \begin{bmatrix}
    x^2 & 1+xy \\
    x  & y
  \end{bmatrix}
  \begin{bmatrix}
    -y^2  &  y \\
    xy+1  & -x
  \end{bmatrix}.
  \]
\end{exm}

\begin{exm} \label{e-no-f2s-no-transfer}
  The module $A/x(x-y)A$ is uniserial with unique composition series induced from $A \supset xA \supset x(x-y)A$.
  Hence, the element
  \[
  \begin{bmatrix} x(x-y) & 0 \\
    0                    & 1 \\
  \end{bmatrix} \in M_2(A)^\bullet
  \]
  has only the obvious rigid factorization.
  Since $A/xA \ncong A/(x-y)A$, this shows that $\theta\colon M_2(A)^\bullet \to \cP(M_2(A))$ is not a transfer homomorphism.
  However, $\overline\theta\colon M_2(A)^\bullet \to \cB(\mathbf 0) \cong (\bN_0,+)$ is a transfer homomorphism by \cref{thm-transfer}.
  (This is a degenerate trivial case; every half-factorial monoid has a transfer homomorphism to $\bN_0$ given by the length function.)
\end{exm}

\begin{prop} \label{p-nonhf}
  Let $R$ be an HNP ring such that every stably free right $R$-ideal is free.
  Suppose that $T$ and $T'$ are faithful towers, at least one of which is non-trivial, and suppose that $\langle T \rangle=\langle T'\rangle = \mathbf 0$.
  If $\Ext^1_R(\tbottom T,\ttop T')\ne \mathbf 0$ and $\Ext^1_R(\tbottom T',\ttop T) \ne \mathbf 0$, then $\rho_2(R^\bullet) \ge 3$.

  In particular, if also $\card{\pcg{R}}\le 2$, there exists no transfer homomorphism to a monoid of zero-sum sequences over a subset of $\pcg{R}$.
\end{prop}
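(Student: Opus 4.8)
The goal is to produce atoms $u_1,u_2,v_1,v_2,v_3\in R^{\bullet}$ with $u_1u_2=v_1v_2v_3$; this gives $\rho_2(R^{\bullet})\ge 3$, hence $R^{\bullet}$ is not half-factorial, and the last assertion then follows formally from \cref{t-transfer}. The construction is the HNP analogue of the Weyl-algebra identity $x^2y=(1+xy)x$ recalled in \cref{eq-weyl}, where $R/x^2yR$ simultaneously admits a length-$2$ factorization (into the non-prime atom $1+xy$ and the atom $x$) and a length-$3$ factorization $x\cdot x\cdot y$.

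\emph{The cyclic configuration and two kinds of atoms.} Since $T$ and $T'$ are faithful, hence linearly ordered, write $\ttop T=V_1,V_2,\dots,V_s=\tbottom T$ and $\ttop T'=W_1,\dots,W_t=\tbottom T'$ for the isomorphism classes occurring in the two towers ($V_1,W_1$ faithful, the rest unfaithful successors in turn). The two hypotheses $\Ext^1_R(V_s,W_1)\ne\mathbf 0$ and $\Ext^1_R(W_t,V_1)\ne\mathbf 0$ say that $W_1$ is a successor of $V_s$ and $V_1$ a successor of $W_t$, so these classes form a cyclic chain $V_1\to\cdots\to V_s\to W_1\to\cdots\to W_t\to V_1$ of length $s+t\ge 3$; WLOG $s\ge 2$. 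Throughout I would use the following consequence of \cref{p-f} together with the hypothesis on stably free ideals: a right $R$-ideal $I$ is principal as soon as $(R/I)\in\famon{\towers R}$, because then $I$ lies in the principal genus, and as all towers occurring in the relevant composition series are $T$ and $T'$, which have class $\mathbf 0$ in $\cgrp R$ by hypothesis, $I$ is stably free, hence free. Consequently an element $a\in R^{\bullet}$ whose prime divisors are among $T,T'$ is an atom precisely when $R/aR$ has no proper nonzero quotient whose class lies in $\famon{\towers R}$. Using \cref{l-ex-uniserial} and the Levy--Robson construction of uniserial modules along a chain of successors (\cite[\S32]{levy-robson11}) together with the Ext non-vanishings, I would then produce a tower-maximal atom $w$ with $R/wR\cong[V_1,\dots,V_s]$ (the uniserial tower block $(T)$), and a \emph{long} atom $c$ with $R/cR$ the uniserial once-around module $U:=[V_2,V_3,\dots,V_s,W_1,\dots,W_t,V_1]$; here $c$ is an atom because every proper nonzero top segment of the composition series of $U$ omits $V_1$, so its class is outside $\famon{\towers R}$, while $(U)=(T)+(T')$.

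\emph{The module.} This is the heart of the matter. Mimicking the Weyl case, I would use the classification of finite-length modules over HNP rings (\cite[\S32, \S41]{levy-robson11}) to construct a cyclic module $M=R/aR$ with $(M)=2(T)+(T')$ realized as a suitable amalgam: the role of $R/x^2R$ (a length-$2$ non-atom, namely $w\cdot w$) is played by an indecomposable non-atom module with class $2(T)$ resp.\ $2(T)+(T')$ having an appropriate simple quotient, glued to the long-atom module $R/cR$ over that quotient, just as $R/x^2yR$ is the amalgam of $R/x^2R$ and $R/(1+xy)R$ over their common simple top. The first structure gives a \emph{maximal} chain $aR\subsetneq bR\subsetneq R$ of principal right ideals with $R/bR\cong U$ and $bR/aR$ a tower block, hence a length-$2$ factorization $a=v_1v_2$ into atoms (both $v_i$ atoms, since once-around and tower-block modules have no proper sum-of-towers quotients); the amalgam structure gives in addition a full flag of principal right ideals with tower-block subquotients, hence a length-$3$ factorization $a=u_1u_2u_3$ into atoms (each $R/u_iR$ a single tower block, since $2(T)+(T')$ cannot be split into three nonzero towers otherwise). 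Thus $u_1u_2=a=v_1v_2v_3$, so $\rho_2(R^{\bullet})\ge 3$. I expect the main obstacle to be precisely the existence and analysis of $M$: one must verify that the requisite indecomposable, non-uniserial module exists (this is where both hypothesized extensions between faithful simples are used, via the structure theory), that $M$ is cyclic with $aR$ principal (via stably free $\Rightarrow$ free), and that the two chains one writes down are genuinely \emph{maximal} among principal right ideals between $aR$ and $R$, so that they correspond to factorizations into atoms rather than merely into non-units.

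\emph{The last assertion.} Suppose $\card{\pcg R}\le 2$ and $G_0\subseteq\pcg R$. Then $\cB(G_0)$ is a divisor-closed submonoid of $\cB(\pcg R)$, and since $\card{\pcg R}\le 2$ the monoid $\cB(\pcg R)$ is half-factorial (the classical fact that $\cB(G)$ is half-factorial when $\card G\le 2$); hence so is $\cB(G_0)$. If there were a weak transfer homomorphism $\theta\colon R^{\bullet}\to\cB(G_0)$, then by \cref{t-transfer} we would have $\cL(R^{\bullet})=\cL(\cB(G_0))$, forcing $R^{\bullet}$ to be half-factorial --- contradicting $\rho_2(R^{\bullet})\ge 3$, which exhibits an element with lengths $2$ and $3$. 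Since every transfer homomorphism to a monoid of zero-sum sequences over a subset of an abelian group is in particular a weak transfer homomorphism, no such transfer homomorphism exists.
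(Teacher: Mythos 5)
Your setup is sound: the reduction to ``$(R/I)\in\famon{\towers R}$ with only the towers $T,T'$ occurring $\Rightarrow$ $I$ principal,'' the resulting atom criterion, the identification of the two key atoms (the tower block $[V_1,\dots,V_s]$ and the once-around uniserial module $U=[V_2,\dots,V_s,W_1,\dots,W_t,V_1]$), and the final deduction of the ``in particular'' statement from half-factoriality of $\cB(G_0)$ for $\card{G_0}\le 2$ are all correct and essentially match the paper. But the heart of the proposition --- exhibiting a single element $a\in R^\bullet$ admitting both a length-$2$ and a length-$3$ factorization --- is exactly the step you leave open, and the route you sketch for it is problematic. You propose an indecomposable, non-uniserial module of class $2(T)$ (or $2(T)+(T')$) glued to $R/cR$ over a common simple top, in analogy with $A/x^2yA$. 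Nothing in the hypotheses guarantees such modules exist: the only Ext information available is $\Ext^1_R(\tbottom T,\ttop T')\ne\mathbf 0$ and $\Ext^1_R(\tbottom T',\ttop T)\ne\mathbf 0$; there is no assumption like $\Ext^1_R(\tbottom T,\ttop T)\ne\mathbf 0$ that would allow a uniserial or indecomposable module ``going around $T$ twice,'' and the gluing over a common simple top requires the two modules to share a top, which they need not. Moreover the Weyl-algebra analogy is misleading here: in that example the atom $1+xy$ of composition length $2$ is protected by a \emph{non-free stably free} intermediate ideal, a mechanism excluded by your standing hypothesis that stably free right $R$-ideals are free.

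The paper's proof shows that no indecomposable amalgam is needed. Take $U\cong R/I$ and $U'\cong R/J$ with $U$ the once-around module as above and $U'$ the tower block $[V_1,\dots,V_s]$ (both uniserial, hence cyclic, by the Levy--Robson construction). Since their tops $V_2$ and $V_1$ are non-isomorphic and a uniserial module has a unique simple quotient, $I+J=R$, so $R/(I\cap J)\cong U\oplus U'$, and $I$, $J$, $I\cap J$ are all principal by your own principality criterion; write $I\cap J=aR$. The chain $aR\subsetneq I\subsetneq R$ gives a factorization of $a$ of length $2$ (your atom criterion applies to both factors, whose quotient modules are $U$ and $U'$). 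For the length-$3$ factorization one produces a chain $aR\subset L\subset K\subset R$ with $(R/K)=(T)$, $(K/L)=(T')$, $(L/aR)=(T)$ --- for instance taking, inside $U\oplus U'$, the submodule generated by the bottom segment $[W_1,\dots,W_t,V_1]$ of $U$ and the bottom segment $[V_2,\dots,V_s]$ of $U'$, and then its socle-layer analogue --- whence $K$ and $L$ are again principal and each subquotient is a single tower, so all three factors are atoms. This yields $\rho_2(R^\bullet)\ge 3$ directly. So the gap in your proposal is precisely the unconstructed module $M$, and it is filled not by the classification of indecomposables but by the elementary observation that the two uniserial modules are comaximal, making $M$ a direct sum.
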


\begin{proof}
  Let $T$ be represented by simple modules $W_1$, $\ldots\,$,~$W_m$, with $W_i$ an unfaithful successor of $W_{i-1}$ for all $i \in [2,m]$,
  and let $T'$ be similarly represented by simple modules $W_1'$, $\ldots\,$,~$W_n'$.
  Suppose without restriction $m \ge 2$.
  By our assumption $\Ext_R^1(W_m,W_1') \ne \mathbf 0$ and $\Ext_R^1(W_n',W_1) \ne \mathbf 0$.
  By \cite[Corollary 16.3]{levy-robson11}, there exist uniserial modules $U$ and $U'$ whose composition factors, from top to bottom, are $W_2$, $\ldots\,$,~$W_m$, $W_1'$, $\ldots\,$,~$W_n'$,~$W_1$, respectively, $W_1$, $\ldots\,$,~$W_m$.
  Uniserial modules are cyclic, hence $U \cong R/I$ and $U'\cong R/J$ for right $R$-ideals $I$ and $J$.
  Since the top composition factors of $U$ and $U'$ differ, we must have $I+J=R$.
  Thus, $R/(I \cap J) \cong U \oplus U'$.
  The right $R$-ideals $I$, $J$, and $I \cap J$ are principal, say $I \cap J = aR$ for some $a \in R^\bullet$.
  The chain $R \supset I \supset I \cap J$ gives a factorization of $a$ of length $2$.
  On the other hand, there exist right $R$-ideals $I \cap J \subset L \subset K \subset R$ such that $(R/K) = (T)$, $(K/L)=(T')$ and $(L/I \cap J)=(T)$.
  This yields a factorization of $a$ of length $3$.
\end{proof}

Let $R = \bI_A(xA) = K + xA$ be the idealizer of the maximal right $A$-ideal $xA$.
Then $R$ is an HNP domain with one non-trivial faithful tower of length $2$ and all other towers are trivial faithful towers.
In $M_2(R)$ again every stably free right $M_2(R)$-ideal is free, because $\udim(M_2(R))=2$, and $\pcg{M_2(R)}=\pcg{R}=\pcg{A}=\mathbf 0$.
Thus, \labelcref{c-fo} holds, while \labelcref{c-ft} is violated.
We note, but will not use, that $M_2(R)$ is itself the idealizer of the right ideal $M_2(xA)$ in $M_2(A)$.

Over $A$, there exist simple modules $U$ and $V$ such that $\Ext^1_A(U,V) \ne \mathbf 0$ and $\Ext^1_A(V,U) \ne \mathbf 0$.
For instance, this is the case for $U=A/xA$ and $V=A/(x-y)A$ by \cite[Corollary 5.8]{mcconnell-robson73}.
Over the idealizer $R=\bI(xA)$, the simple module $U$ is a uniserial module of length two, with top composition factor $W_1 \cong A/R$ and bottom composition factor $W_2 \cong R/xA$.
The module $V$ remains a simple module over $R$, with $\Ext^1_R(W_2,V) \ne \mathbf 0$ and $\Ext^1_R(V,W_1) \ne \mathbf 0$ (see \cite[\S5]{levy-robson11}).

In the ring $M_2(R)$ the situation remains the same by Morita equivalence, except that now also every stably free right $R$-ideal is free.
Thus $\rho_2(M_2(R)^\bullet) \ge 3$ by the preceding proposition.
We now give an explicit example of this.
The computation, which is based on the module-theoretic reasoning, is sketched below.

\begin{exm} \label{e-no-ft}
The ring $M_2(R)$ is not half-factorial; indeed for
\[
a=\begin{bmatrix}
  x(x-y)(x-yx) & x(x-y)(-xy+xy^2) \\
  x^2 - (1+xy)x & (1+xy)(1-x)+x^2y^2
  \end{bmatrix}
\]
we have
{\footnotesize
\begin{align*}
a&=\underbrace{\begin{bmatrix}
  x(x-y) & 0 \\
  0 & 1\\
\end{bmatrix}}_{u_1}
\underbrace{\begin{bmatrix}
  x-yx & -xy+xy^2 \\
  x^2 - (1+xy)x & (1+xy)(1-x)+x^2y^2
\end{bmatrix}}_{u_2}\\
&=\underbrace{\begin{bmatrix}
x & xy\\
x & 1+xy
\end{bmatrix}}_{w_1}
\underbrace{\begin{bmatrix}
-xy^2 + x^2y - xy - x + 1 &     -xy^3 + x^2y^2 - xy^2 - xy \\
xy - x^2 + x             & xy^2 - x^2y + xy +1
\end{bmatrix}}_{w_2}
\underbrace{\begin{bmatrix}
x & -xy \\
-x & 1+xy
\end{bmatrix}}_{w_3}
\end{align*}}%
where $u_1$, $u_2$, $w_1$, $w_2$, and $w_3$ are atoms of $M_2(R)^\bullet$.
While $w_2$ is somewhat unwieldy, we have
\[
w_1w_2 = \begin{bmatrix} x(x-y) & x(x-y)y \\
x & 1 + xy.
\end{bmatrix}
\]
Note that $u_1$ is not an atom in the bigger ring $M_2(A)$.
In $M_2(A)$ all factorizations of $a$ have length $3$.
\end{exm}

\begin{proof}[Outline of computations for \cref{e-emb-factor,e-no-ft}]
  Under the Morita correspondence, the lattice of right submodules of $R_R^2$ is isomorphic to the lattice of right ideals of $M_2(R)$.
  Under this correspondence, a module $M \subset R_R^2$ is mapped to the right ideal of $M_2(R)$ consisting of matrices all of whose columns are elements of $M$.
  In particular, if $M$ is free of uniform dimension $2$ with basis $(x_1,x_2)$, $(y_1,y_2) \in R_R^2$, the corresponding right ideal is the principal right $M_2(R)$-ideal generated by
  \[
  \begin{bmatrix} x_1 & y_1 \\ x_2 & y_2 \end{bmatrix}.
  \]
  Factorizations of an element $a \in M_2(R)^\bullet$ correspond to finite maximal chains of principal right ideals between $aM_2(R)$ and $M_2(R)$.
  Thus, if $A$ denotes the submodule of $R_R^2$ corresponding to $aM_2(R)^\bullet$, factorizations of $a$ correspond to finite maximal chains of free submodules between $A$ and $R_R^2$.
  It is this point of view that we will use to construct the desired examples.
  The main work lies in computing explicit bases for certain free modules.

  The module $(A/x(x-y)A)_A$ is uniserial by \cite[Corollary 5.10]{mcconnell-robson73}.
  The ring $R=K + xA$ is the idealizer ring of the maximal right ideal $xA$ of $A$.
  Hence, $(A/xA)_R$ is a uniserial module of length $2$, with unique nonzero proper submodule $(R/xA)_R$.

  \newcommand{\savedparindent}{\parindent}

  \begin{enumerate}[align=left,leftmargin=0pt,itemindent=0pt,labelwidth=!,itemsep=0.25em,topsep=0.25em,label=\emph{Step \arabic*:}]
  \item \emph{Find a right $R$-ideal $I_R$ with $R_R/I_R \cong (A/xA)_R$.}
    Since $(A/xA)_R$ is uniserial, any element of $(A/xA)_R$ not contained in $(R/xA)_R$ generates $(A/xA)_R$.
    Thus
    \[
    \varphi\colon R_R \to (A/xA)_R,\quad r \mapsto yr + xA
    \]
    is an epimorphism.
    If $r=\lambda + xf$ with $\lambda \in K$ and $f \in A$, then
    \[
    \varphi(\lambda + xf) = y\lambda + xyf - f + xA = y\lambda - f + xA.
    \]
    Thus $x^2A \subset \ker(\varphi)$ and $1+xy \in \ker(\varphi)$, and hence $x^2A + (1+xy)R \subset \ker(\varphi)$.
    Since $(R/x^2A)_R$ has length $3$, the module $(A/xA)_R$ has length $2$, and $1+xy \not\in x^2A$, equality holds.
    Moreover, also $\ker(\varphi)=x^2R + (1+xy)R$, since $x^2y = x(1+yx)=(1+xy)x$.
    We set
    \[
    I = \ker(\varphi) = x^2A + (1+xy)R = x^2R + (1+xy)R.
    \]

  \item\emph{Find a basis of $R_R \oplus I_R$.}
    We know that $I_R$ is stably free, and need to exhibit an explicit isomorphism $R_R^2 \cong R_R \oplus I_R$.
    For this, we first seek an isomorphism $A_A^2 \cong A_A \oplus IA_A$.

    \begin{enumerate}[align=left,leftmargin=0pt,itemindent=0pt,labelwidth=!,itemsep=0.25em,topsep=0.25em,listparindent=\savedparindent,label=\emph{Step \arabic{enumi}\alph*:}]
    \item\emph{Find a basis of $A_A \oplus IA_A$.}
      Let $\pi\colon A_A^2 \to x^2A + (1+xy)A$ be the epimorphism satisfying $\pi(f,g)=x^2f + (1+xy)g$.
      Since $x^2A \cap (1+xy)A = x^2yA = (1+xy)xA$, there is a short exact sequence
      \[
      \begin{tikzcd}
        \mathbf 0 \ar[r] & A_A \ar[r]  & A_A^2 \ar[r,"\pi"] & IA_A = x^2A + (1+xy)A \ar[r] & \mathbf 0,
      \end{tikzcd}
      \]
      where the homomorphism on the left is given by $f \mapsto (yf,-xf)$.
      Since $A_A$ is hereditary, $IA_A$ is projective and hence the sequence splits.
      If $\varepsilon$ is a right inverse of $\pi$, then $\varepsilon(x^2)=(1,0) + (y,-x)f = (1+yf,-xf)$ and $\varepsilon(1+xy)=(0,1)+(y,-x)g=(yg,1-xg)$ for some $f$,~$g \in A$.
      Since $x^2y = (1+xy)x$, we must have $(y+yfy,-xfy)=(ygx,x-xgx)$, which is equivalent to $1+fy=gx$.
      This equation is solved by $f=-x$ and $g=-y$, and one checks that indeed
      \[
      \varepsilon\colon
      \begin{cases}
        IA_A & \to A^2_A \\
        x^2  & \mapsto (1,0) + (y,-x)(-x) = (1-yx, x^2) \\
        1+xy & \mapsto (0,1) + (y,-x)(-y) = (-y^2, 1+xy)
      \end{cases}
      \]
      is a homomorphism with $\pi\circ \varepsilon = \id_{IA_A}$.
      Thus, we have an isomorphism
      \[
      \kappa^{-1}\colon A_A \oplus IA_A \isomto A_A^2, \quad (f,g) \mapsto (yf,-xf) + \varepsilon(g).
      \]
      Computing the preimages of $(1,0)$ and $(0,1)$ under this isomorphism, it follows that the pair $\kappa(1,0)=(x,x^2)$, $\kappa(0,1)=(y,1+xy)$ is a basis of $A_A \oplus I A_A$.

      The basis gives us the left factor in \cref{e-emb-factor}.
      The corresponding right factor can be found by solving a linear system.
      To finish this example, it remains to see that the two factors are atoms.
      We already know that $(A/xA)_A$ has length $2$ and that $(A/(x^2A\cap (1+xy)A))_A$ has length $3$ due to $x^2A\cap (1+xy)A = x^2yA=(1+xy)xA$.
      It follows that the length of $(A/(1+xyA))_A$ is $2$ and that $1+xyA \subsetneq IA \subsetneq A$.
      Hence, $(A/IA)_A$ and $(IA/(1+xy)A)_A$ are simple, and the computed factors are indeed atoms.

    \item\emph{Modify the basis of $A_A \oplus IA_A$ so that it is also one of $R_R \oplus I_R$.}
      Since $y \not\in R$, the computed basis of $A_A \oplus IA_A$ is not one of $R_R\oplus I_R$.
      To find a basis of $R_R\oplus I_R$ from the one for $A_A \oplus IA_A$, we follow the steps of the proof of the Descent Theorem (see \cite[Theorem 34.4]{levy-robson11}) in its most basic case.
      Consider the diagram
      \[
      \begin{tikzcd}
        A_A \oplus IA_A \ar[r,"\alpha"] & (A/xA)_A^2 \ar[r] & \mathbf 0 \\
        A_A \oplus A_A \ar[r,"\beta"] \ar[u,"\kappa","\rotatebox{90}{\(\sim\)}"'] & (A/xA)_A^2 \ar[r] \ar[u,equal] & \mathbf 0 \\
        A_A \oplus A_A \ar[r,"\gamma"] \ar[u,dashed,"\tau","\rotatebox{90}{\(\sim\)}"'] & (A/xA)_A^2 \ar[r] \ar[u,equal] & \mathbf 0,
      \end{tikzcd}
      \]
      where $\alpha(f,g) = (f+xA,g+xA)$, the homomorphism $\beta$ is chosen so that the upper part of the diagram commutes, that is, $\beta(1,0)=(0,0)$ and $\beta(0,1) = (y+xA, 1+xA)$, and $\gamma$ satisfies $\gamma(1,0)=(1+xA,0)$ and $\gamma(0,1)=(0,1+xA)$.
      Our goal is to find the indicated isomorphism $\tau$ that makes the lower diagram commute.
      Then, since $R_R \oplus R_R$ is the preimage of the socle of $(A/xA)_A^2$ under $\gamma$, and $R_R \oplus I_R$ is the preimage of the socle of $(A/xA)_A^2$ under $\alpha$, the isomorphism $\kappa \circ \tau$ restricts to an isomorphism $R_R \oplus R_R \to R_R \oplus I_R$.

      Computing $\tau$ amounts to straightening $\gamma$ as in \cite[Lemma 49.5]{levy-robson11}.
      First note that $\gamma(y,1) = \beta(0,1)$.
      Thus $(y,1)$, $(1,0)$ is a basis of $A_A^2$ for which $\gamma((y,1)A)=(A/xA)_A^2$, and $\gamma|_{(y,1)A}$ is equivalent to $\beta|_{(0,1)A}$.
      From the relation $xy-yx=1$ it is immediate that a preimage of $\gamma(1,0)=(1+xA,0)$ under $\gamma|_{(y,1)A}$ is $(-yx,-x)$.
      Define $\psi\colon (1,0)A \to (y,1)A$ by $(1,0) \mapsto (yx,x)$.
      Then $\gamma|_{(1,0)A} = \gamma|_{(y,1)A} \circ (-\psi)$.
      Thus $\gamma(1+yx,x) = \gamma((1,0) + \psi(1,0)) = 0$.
      It follows that $(y,1)$, $(1+yx,x)$ is the desired basis of $A_A^2$.

      We define $\tau$ by $\tau^{-1}(1,0) = (1+yx, x)$ and $\tau^{-1}(0,1) = (y,1)$.
      Then $\tau(1,0) = (1,-x)$ and $\tau(0,1)=(-y,1+xy)$.
      Consequently, $\kappa\circ\tau(1,0) = (x-yx, x^2-(1+xy)x)$ and $\kappa\circ\tau(0,1) = (-xy+xy^2, (1+xy)(1-x) + x^2y^2)$, and these vectors constitute a basis of $R_R \oplus I_R$.
      Subsequently, a basis for $x(x-y)R_R \oplus I_R = (R_R \oplus I_R) \cap (x(x-y)R_R \oplus R_R)$ is obtained by multiplying the first coordinate of these basis vectors by $x(x-y)$ from the left.
      Under Morita equivalence, this corresponds to the matrix $a$ in \cref{e-no-ft}.
    \end{enumerate}

    The left factor $u_1$ of $a$ is immediate from the construction.
    The cofactor $u_2$ can be found by solving a linear system.
    The representation $a=u_1u_2$ corresponds to the chain of submodules on the left hand side of the diagram in \cref{fig-no-ft}.
    It is clear that $u_2$ is an atom, since $(M_2(R)/u_2M_2(R))_{M_2(R)}$, which corresponds to $(R/I)_R \cong (A/xA)_R$, consists of a single tower.
    To see that $u_1$ is an atom, we show that $(R_R \oplus R_R)/(x(x-y)R_R \oplus R_R)$ is uniserial.

    \begin{figure}
      \begin{tikzcd}[every arrow/.append style={dash},column sep={3.2cm,between origins}]
        & R \oplus R & \\
        xA \oplus R \ar[ru,"W_2"] & & R\oplus J \ar[lu,"W_1"'] \\
        x(x-y)A \oplus R \ar[u,"V"] & X \ar[lu,"W_1"'] \ar[ru] & R \oplus I \ar[u,"W_2"'] \\
        x(x-y)R \oplus R \ar[u,"W_1"] & Y \ar[u,"V"'] \ar[lu] \ar[ruu] & \\
        & & \\
        & x(x-y)R \oplus I \ar[luu,"(R/I)_R\cong(A/xA)_R"] \ar[ruuu] \ar[uu,"W_1\oplus W_2"',very near end] & \\
      \end{tikzcd}
      \caption{Modules involved in the construction of \cref{e-no-ft}.}
      \label{fig-no-ft}
    \end{figure}
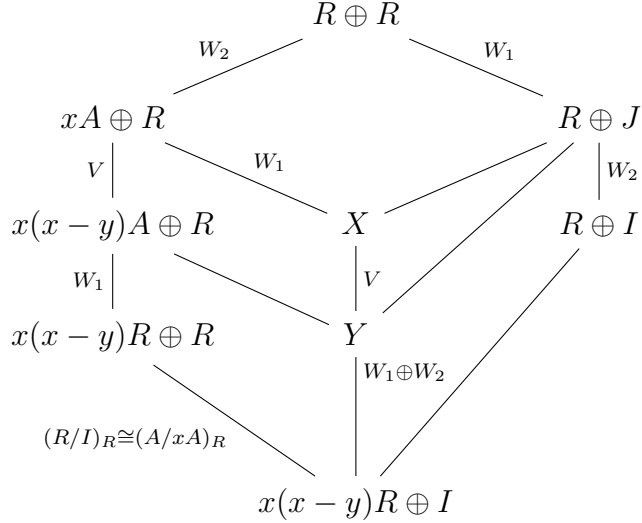

  \item\emph{The module $(R_R \oplus R_R)/(x(x-y)R_R \oplus R_R)$ is uniserial.}
    It suffices to show that $(R/x(x-y)R)_R$ is uniserial.
    By \cite[Lemma 16.1]{levy-robson11}, it suffices to show that the two length $2$ subfactors $(R/x(x-y)A)_R$ and $(A/(x-y)R)_R \cong (xA/x(x-y)R)_R$ are both uniserial, that is, non-split.
    The $A$-modules $(A/x(x-y)A)_A$ and $(A/(x-y)xA)_A$ are uniserial by \cite[Corollary 5.10]{mcconnell-robson73}.
    Suppose $(R/x(x-y)A)_R$ is split.
    Then $(R/x(x-y)A)_R \cong (W_2)_R \oplus V_R$, and hence $(R/x(x-y)A)_R \otimes {}_R A_A \cong (A/xA)_A \oplus V_A$.
    However, $(R/x(x-y)A)_R \otimes {}_R A_A \cong (A/x(x-y)A)_A$ since $R_R \otimes {}_R A_A \cong A_A$ by multiplication and this isomorphism carries $x(x-y)A_R \otimes {}_R A_A$ to $x(x-y)A_A$.
    (Recall that ${}_R A$ is finitely generated projective, hence flat.)
    This contradicts the fact that $(A/x(x-y)A)_A$ is uniserial.

    Similarly, $(A/(x-y)R)_R \otimes {}_R xA_A \cong (A/(x-y)xA)_A$ since $A_R \otimes {}_R xA_A \cong A_A$ by multiplication and this isomorphism carries the submodule $(x-y)R_R \otimes {}_R xA_A$ to $(x-y)RxA_A=(x-y)xA_A$.
    Thus, if $(A/(x-y)R)_R$ were split, then $(A/(x-y)R)_R \cong V_R \oplus (W_1)_R$ and hence $(A/(x-y)xA)_A \cong (A/(x-y)R)_R \otimes {}_R xA_A \cong V_A \oplus (A/xA)_A$.
    This contradicts the fact that $A/(x-y)xA$ is uniserial.

  \item\emph{Find submodules for the factorization of length $3$.}
    We still have to obtain the second factorization, which will have length $3$.
    The module $(R/xA)_R$ is the unique nonzero proper submodule of $(A/xA)_R$.
    Hence, the unique right $R$-ideal properly contained between $I_R$ and $R_R$, which we denote by $J_R$, can be found by taking a preimage of $1+xA$ under $\varphi\colon R_R \to (A/xA)_R, 1 \mapsto y+xA$.
    The relation $xy-yx=1$ suggests to try $\varphi(-x) = -yx+xA = 1-xy + xA = 1 + xA$.
    Thus
    \[
    J = -xR + I = xR + I = xR + (1+xy)R.
    \]
    Set
    \[
    X_R = (xA_R \oplus R_R) \cap (R_R \oplus (xR + (1+xy)R))_R = xA_R \oplus (xR + (1+xy)R)_R.
    \]
    Then $((R_R \oplus R_R)/X)_R \cong (W_1)_R \oplus (W_2)_R$, and hence $X_R$ is free.
    Similarly, set
    \[
    Y_R = x(x-y)A_R \oplus (xR + (1+xy)R)_R.
    \]
    Since $(X/Y)_R \cong V_R$, also $Y_R$ is free and the chain $R_R \oplus R_R \supset X_R \supset Y_R \supset x(x-y)R_R \oplus I_R$ cannot be refined any further with free modules.
    It remains to find bases of $X_R$ and $Y_R$.

  \item\emph{Find a basis of $A_R \oplus J_R$.}
    Since $JA=xA + (1+xy)A=A$, it is trivial to find a basis of $A_A \oplus JA_A$.
    As before, we follow the idea of the Descent Theorem to modify this basis so that it is also one of $R_R \oplus J_R$.
    We know $\rho(R_R,W_2)=1$ since $(R/xA)_R\cong W_2$.
    Hence $\rho(J_R,W_2)=2$ by \cref{l-rank}, that is, $(J/JxA)_R \cong (R/xA)_R^2$.
    Note that $JxA=xRxA + (1+xy)RxA = x^2A + (1+xy)xA$.
    Thus, there is a homomorphism
    \[
    \begin{cases}
      (R/xA)_R^2 &\to (J/JxA)_R, \\
      (1+xA,0) &\mapsto x + JxA, \\
      (0,1+xA) &\mapsto 1+xy + JxA, \\
    \end{cases}
    \]
    which is clearly surjective and hence an isomorphism.
    Its inverse induces an isomorphism $(A_R \oplus J_R)/ (A_R \oplus JxA_R) \to (R/xA)_R^2$.
    Extending to $A$-modules, we obtain a homomorphism $\beta\colon A_A^2 \to (A/xA)_A^2$ which satisfies $\beta(1,0)=(0,0)$, $\beta(0,x)=(1+xA,0)$, and $\beta(0,1+xy)=(0,1+xA)$.
    Since $1 = x(-y) + (1+xy)$, this means $\beta(0,1) = (-y+xA, 1+xA)$.

    As before, we need to find an isomorphism $\tau$ that makes the diagram
    \[
    \begin{tikzcd}
      A_A \oplus JA_A = A_A \oplus A_A \ar[r,"\beta"]  & (A/xA)_A^2 \ar[r]  & \mathbf 0 \\
      A_A \oplus A_A \ar[r,"\gamma"] \ar[u,dashed,"\tau","\rotatebox{90}{\(\sim\)}"'] & (A/xA)_A^2 \ar[r] \ar[u,equal] & \mathbf 0,
    \end{tikzcd}
    \]
    commute, where $\gamma$ satisfies $\gamma(1,0)=(1+xA,0)$ and $\gamma(0,1) = (0,1+xA)$.
    Then $\tau$ restricts to an isomorphism $R_R \oplus R_R \to A_R \oplus J_R$, since  $R_R \oplus R_R$ is the preimage of the socle of $(A/xA)_R^2$ under $\gamma$, and $R_R \oplus J_R$ is the preimage of the socle of $(A/xA)_R^2$ under $\beta$.

    Note that $\gamma(-y,1)=(-y+xA,1+xA)=\beta(0,1)$.
    Thus, the pair $(1,0)$, $(-y,1)$ is a basis of $A_A^2$ for which $\gamma((-y,1)A)=(A/xA)^2$ and $\gamma|_{(-y,1)A}$ is equivalent to $\beta|_{(0,1)A}$.
    A preimage of $\gamma(1,0)=(1+xA,0+xA)$ under $\gamma|_{(-y,1)A}$ is given by $(-y,1)x$.
    Thus $\gamma(1+yx,-x)=0$, and $\tau$ with $\tau^{-1}(1,0) = (1+yx,-x)$ and $\tau^{-1}(0,1) = (-y,1)$ makes the diagram commute.
    It follows that $\tau(1,0) = (1,x)$ and $\tau(0,1) = (y,1+xy)$, and these two vectors constitute a basis of $A_R \oplus J_R$.
  \end{enumerate}

  Since $X_R = xA_R \oplus J_R$ and $Y_R=x(x-y)A_R \oplus J_R$, bases for $X_R$ and $Y_R$ are obtained from the one of $A_R \oplus J_R$ by multiplying the first coordinate by $x$, respectively $x(x-y)$, from the left.
  Under the Morita equivalence, this gives the elements $w_1$ and $w_1w_2$.
  The cofactors $w_2$ and $w_3$ can again be computed by solving a linear system.
  Since $(R_R \oplus R_R) / X_R \cong W_1 \oplus W_2$ and $X_R/Y_R \cong V_R$ both of $w_1$ and $w_2$ are atoms.
  Since the composition series of $Y_R/(x(x-y)R_R \oplus I_R)$ consists of a single tower, also $w_3$ is an atom.
\end{proof}

\bibliographystyle{alphaabbr}
\bibliography{all}

\end{document}